\newtheorem{theorem}{Theorem}[section]
\newtheorem{proposition}[theorem]{Proposition}
\newtheorem{corollary}[theorem]{Corollary}
\newtheorem{lemma}[theorem]{Lemma}
\theoremstyle{definition}
\newtheorem{remark}[theorem]{Remark}
\newtheorem{definition}[theorem]{Definition}
\theoremstyle{definition}
\def\N{{\mathbb{N}}}
\def\R{{\mathbb{R}}}
\def\Z{{\mathbb{Z}}}
\def\T{{\mathbb{T}}}
\def\C{{\mathbb{C}}}
\def\MM{{\mathcal{M}}}
\renewcommand{\v}[1]{\ensuremath{\mathbf{#1}}}
\title{Uncertainty Principles for Fourier Multipliers}
\author[M.~Northington]{Michael Northington V}
\address{School of Mathematics, Georgia Institute of Technology, Atlanta, GA 30332}
\email{mcnv3@gatech.edu}
\thanks{The author is supported by NSF grant DMS-1344199.  Work on this paper was also supported by  NSF grants DMS-1600726 and DMS-1521749.}
\subjclass[2010]{Primary 42C15}
\keywords{Fourier multiplier, Balian-Low theorem, uncertainty principle.}
\begin{document}

\begin{abstract}
The admittable Sobolev regularity is quantified for a function, $w$, which has a zero in the $d$--dimensional torus and whose reciprocal $u=1/w$ is a $(p,q)$--multiplier.
Several aspects of this problem are addressed, including zero--sets of positive Hausdorff dimension, matrix valued Fourier multipliers, and non--symmetric versions of
Sobolev regularity.  Additionally, we make a connection between Fourier multipliers and approximation properties of Gabor systems and shift--invariant systems.  We exploit this connection and the results on Fourier multipliers to refine and extend versions of the Balian--Low uncertainty principle in these settings.
\end{abstract}

\maketitle

\section{Introduction}\label{sec:Introduction}
Let $\mathcal{F}$ denote the Fourier transform on $L^1(\T^d)$. Given $2\leq q\leq \infty$, a function $u\in L^2(\T^d)$ is called a $(2,q)$--\textit{Fourier multiplier}, or $(2,q)$--\textit{multiplier} for short, if the operator $T_u$ defined by
$$
T_u a= \mathcal{F}(u\mathcal{F}^{-1}{a})
$$
is a bounded operator from  $\ell^2(\Z^d)$ to $\ell^q(\Z^d)$. The family of all $(2,q)$--multipliers is denoted by $\mathcal{M}_2^q$ and is a Banach space when
endowed with the operator norm $\|u\|_{\MM_2^q}= \|T_u\|_{\ell^2(\Z^d)\rightarrow \ell^q(\Z^d)}$.
Fourier multipliers are a classical subject in analysis (see e.g. \cite{Ma, Mi}). The study of Fourier multipliers from a $p$--normed space to a different $q$--normed space goes back to A.~Devinatz and I.~I.~Hirschman,~Jr \cite{DH,Hi} and to L.~H\"ormander \cite{H}.

A function $u\in L^{\infty}(\T^d)$ is clearly a $(2,q)$--multiplier for every value of $q \geq 2$. In fact, it is readily checked that $\mathcal{M}_2^2=L^{\infty}(\T^d)$. However, if $u$ is not bounded then the situation becomes more delicate, and one may suspect that if $u$ `grows rapidly' near its `singularities' then $u$ will not be a $(2,q)$--multiplier, at least for certain values of $q$.

All results in this paper are joint work with Shahaf Nitzen and Alex Powell. The goals of this paper are twofold. First, as described above, we study growth restrictions on a $(2,q)$--multiplier $u$. More precisely, we assume that
$u=1/w$ for some function $w$ with a zero in the $d$--dimensional torus, and we quantify how smooth $w$ can be in the sense of Sobolev regularity.
Our results extend in several directions including: zero sets of positive Hausdorff dimension, general $(p,q)$--multipliers, non--symmetric versions of Sobolev regularity,
and matrix valued Fourier multipliers.

Our second goal is to developed a machinery by which one can relate such results on Fourier multipliers to problems on the uncertainty principle in time--frequency analysis.
We investigate trade--offs between the time--frequency localization of the window function $g$ and the approximation properties of the associated Gabor system, that is, the collection of integer translates and modulates of $g$.
Our motivation comes from the classical Balian--Low theorem \cite{Bal81, Bat88, Dau, Low} which quantifies the time--frequency localization of generators of Gabor systems
that are Riesz bases for $L^2(\R)$.
We use our results on Fourier multipliers to provide sharp refinements of Balian--Low theorems obtained in \cite{NO1}.
Our approach also yields new Balian--Low type theorems for systems of translates in shift--invariant spaces.

\subsection{Restrictions on \texorpdfstring{$(2,q)$--multipliers}{(2,q)--multipliers}} \label{sec:restric-summary}
Given $s\geq 0$, recall that $w\in L^2(\T^d)$ belongs to the Sobolev space $W^{s,2}(\T^d)$ if and only if $\widehat{w} \in \ell^2(\Z^d)$ satisfies
$\sum_{k\in\Z^d}|k|^{2s}|\widehat{\omega}(k)|^2<\infty.$

\subsubsection{The case of a single zero}\label{1.1.1}

Our first result is the following.

\begin{theorem}[Nitzan, Northington, Powell]\label{thm:SobolevResultSingleZero-2-2}
 Let $\frac{d}{2}<s\le d$ and $w \in W^{s,2}(\T^d)$.  Suppose that $w$ has a zero.
\begin{itemize}
	\item[i.] If $s<\frac{d}{2}+1$, then $u=\frac{1}{w} \notin \mathcal{M}_2^q$ for any $q$ satisfying $2 \le q \le \frac{d}{d-s}$.  Conversely, for any $q>\frac{d}{d-s}$, there exists $w \in W^{s,2}(\T^d)$ such that $w$ has a zero and $u=\frac{1}{w}\in \mathcal{M}_2^q$.
	\item[ii.] If $s=\frac{d}{2}+1$, then $u=\frac{1}{w} \notin \mathcal{M}_2^q$ for any $q$ satisfying $2 \le q<\frac{2d}{d-2}$.
	\end{itemize}
\end{theorem}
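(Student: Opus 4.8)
The plan is to prove the two non--membership assertions by testing $T_u$ against bump functions concentrated at a zero of $w$, and to prove the converse clause in (i) by exhibiting a $w$ comparable to $|x|^\alpha$ near a single zero. Since $s>d/2$ we have $W^{s,2}(\T^d)\hookrightarrow C(\T^d)$, so ``$w$ has a zero'' makes sense pointwise, and after a translation I may assume $w(0)=0$. The quantitative input I would isolate first is a mild sharpening of the H\"older embedding: if $d/2<s<d/2+1$, $w\in W^{s,2}(\T^d)$ and $w(0)=0$, then $|w(x)|=o\bigl(|x|^{s-d/2}\bigr)$ as $x\to 0$. To prove it I would write $w(x)=\sum_k\widehat w(k)\bigl(e^{2\pi i k\cdot x}-1\bigr)$, use $|e^{2\pi i k\cdot x}-1|\le\min(2,2\pi|k|\,|x|)$, split the sum at $|k|\sim\lambda/|x|$, and estimate each piece by Cauchy--Schwarz; the high--frequency piece carries the factor $\bigl(\sum_{|k|>\lambda/|x|}|k|^{2s}|\widehat w(k)|^2\bigr)^{1/2}$, which tends to $0$, so first choosing $\lambda$ small (to absorb the low--frequency constant) and then $|x|$ small gives $|w(x)|\le\eta|x|^{s-d/2}$ for any prescribed $\eta>0$. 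The point is that $|x|^{s-d/2}$ itself just fails to belong to $W^{s,2}(\T^d)$, so membership forces a little extra decay, and this extra decay is exactly what is needed to reach the endpoint exponent $q=d/(d-s)$.

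For the non--membership, suppose $u=1/w\in\mathcal{M}_2^q$ with $\|u\|_{\mathcal{M}_2^q}=M$; since $\mathcal{M}_2^q\subset L^2(\T^d)$ we have $w\neq 0$ a.e.\ and $uw=1$ a.e. Fix $\psi\in C_c^\infty(\R^d)$ with $\|\psi\|_{L^2}=1$ supported in a small ball, and for small $\delta>0$ set $g_\delta(x)=\delta^{-d/2}\psi(x/\delta)$ on $\T^d$, so $\|g_\delta\|_{L^2(\T^d)}=1$. Testing the defining inequality of $\mathcal{M}_2^q$ on $a_\delta:=\widehat{wg_\delta}\in\ell^2$, and using $\mathcal{F}^{-1}a_\delta=wg_\delta$, hence $T_ua_\delta=\widehat{u\,wg_\delta}=\widehat{g_\delta}$, together with Plancherel, gives
\[
\|\widehat{g_\delta}\|_{\ell^q}\ \le\ M\,\|wg_\delta\|_{L^2}\ \le\ M\,\sup_{|x|\le C\delta}|w(x)|\ =\ o\bigl(\delta^{s-d/2}\bigr).
\]
On the other hand $\widehat{g_\delta}(k)=\delta^{d/2}\widehat\psi(\delta k)$, where $\widehat\psi$ now denotes the Fourier transform of $\psi$ on $\R^d$, and comparing $\sum_{k\in\Z^d}|\widehat\psi(\delta k)|^q$ with $\int_{\R^d}|\widehat\psi|^q$ as a Riemann sum gives $\|\widehat{g_\delta}\|_{\ell^q}\ge c\,\delta^{d/2-d/q}$ for some $c>0$. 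If $2\le q\le d/(d-s)$ then $d/2-d/q\le s-d/2$, so $\delta^{d/2-d/q}\ge\delta^{s-d/2}$ for $\delta\le 1$; combining this with the two estimates forces $c\,\delta^{s-d/2}\le o\bigl(\delta^{s-d/2}\bigr)$ as $\delta\to 0^+$, which is impossible. Hence $u\notin\mathcal{M}_2^q$ for every $q$ in this range, which is the first assertion of (i).

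For the converse in (i), fix $q>d/(d-s)$ and pick $\alpha$ with $s-\tfrac d2<\alpha<\tfrac d2-\tfrac dq$, which is possible precisely because $q>d/(d-s)$. Let $w=\rho\,|x|^{\alpha}+(1-\rho)$, where $\rho$ is a smooth cutoff equal to $1$ near $0$ and supported in a small ball; then $w\in C^\infty(\T^d\setminus\{0\})$, $w>0$ off the origin, $w(0)=0$, and $w=|x|^\alpha$ near $0$. Since a localized $|x|^\alpha$ lies in $W^{\beta,2}(\T^d)$ for every $\beta<\alpha+\tfrac d2$ and $\alpha+\tfrac d2>s$, we get $w\in W^{s,2}(\T^d)$. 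For the multiplier bound, Hausdorff--Young and H\"older give
\[
\|T_ua\|_{\ell^q}=\|\widehat{u\,\mathcal{F}^{-1}a}\|_{\ell^q}\le\|u\,\mathcal{F}^{-1}a\|_{L^{q'}}\le\|u\|_{L^r}\,\|\mathcal{F}^{-1}a\|_{L^2}=\|u\|_{L^r}\,\|a\|_{\ell^2},
\]
where $\tfrac1r=\tfrac12-\tfrac1q$ (so $r=\tfrac{2q}{q-2}$, read as $r=2$ when $q=\infty$); since $u=1/w$ equals $|x|^{-\alpha}$ near $0$, is bounded elsewhere, and $\alpha r<d$ by the choice of $\alpha$, we have $u\in L^r(\T^d)$, hence $u\in\mathcal{M}_2^q$, as required.

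Finally, (ii) follows from (i) by a limiting argument: if $w\in W^{d/2+1,2}(\T^d)$ has a zero, then $w\in W^{s,2}(\T^d)$ has a zero for every $s\in(d/2,d/2+1)$, so by (i) we have $1/w\notin\mathcal{M}_2^q$ whenever $2\le q\le d/(d-s)$; since $\sup_{s<d/2+1}d/(d-s)=2d/(d-2)$, taking the union of these ranges over such $s$ yields $1/w\notin\mathcal{M}_2^q$ for all $2\le q<2d/(d-2)$. I expect the main obstacle to be the endpoint $q=d/(d-s)$ in (i): there the one--scale estimate is exactly balanced, and the argument above collapses to the useless inequality $c\le M$ if one uses only the crude bound $|w(x)|\lesssim|x|^{s-d/2}$; moreover naive dyadic superpositions of the $g_\delta$ push the two sides the wrong way, so the whole endpoint case genuinely rests on the $o\bigl(|x|^{s-d/2}\bigr)$ refinement of the embedding established in the first paragraph. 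The remaining ingredients --- the Riemann--sum lower bound for $\|\widehat{g_\delta}\|_{\ell^q}$, and the $W^{s,2}$--membership and strict positivity off the origin of the cutoff construction --- are routine.
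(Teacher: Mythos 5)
Your proof is correct and follows essentially the same strategy as the paper's: test the multiplier inequality against an $L^2$--normalized bump concentrated at the zero, beat the endpoint exponent using the fact that the H\"older constant of $w$ near its zero tends to zero (the paper gets this from the local--seminorm form of the embedding, Theorem \ref{thm:FracHolderEmb}, while you get it by the Fourier--series splitting the paper itself uses for Theorem \ref{thm:nonsymSobolevEmb} --- an equivalent route), exhibit $|x|^{\alpha}$ near a single zero together with $L^{2q/(q-2)}\subset\mathcal{M}_2^q$ for sharpness, and deduce (ii) from (i) by exhausting $s<\frac d2+1$. The remaining differences (smooth bump versus $\chi_{I_\tau}$, Riemann--sum lower bound versus the standard estimate for $\|\widehat{\chi_{I_\tau}}\|_{\ell^q}$) are cosmetic.
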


The assumption $s>\frac{d}{2}$ in Theorem \ref{thm:SobolevResultSingleZero-2-2} implies that $W^{s,2}(\T^d)$ embeds into $C(\T^d)$ (see e.g. Section \ref{sec:Sobolevdef}),
and so the assumption that  `$w$ has a zero' should be interpreted to mean that the continuous representative of $w$ has a zero. Further, the relation $ \mathcal{M}_2^2=L^{\infty}(\T^d)$ implies that Theorem \ref{thm:SobolevResultSingleZero-2-2} also holds in the case $s=\frac{d}{2}$, where $w$ need not have a continuous representative and where the condition `$w$ has a zero' requires suitable interpretation, see Section \ref{subsec:LargerZeroSets}.
\begin{remark}\label{rem:Wsr}
	A version of Theorem \ref{thm:SobolevResultSingleZero-2-2} holds when the more general Sobolev spaces  $W^{s,r}(\T^d)$, with $1<r <\infty$, are considered. See Theorem \ref{thm:SobolevResultSingleZero-r-2}. (See also Section \ref{sec:Sobolevdef} for the definition of the spaces  $W^{s,r}(\T^d)$).
\end{remark}
\begin{remark}\label{rem:Wnonsymm}
A version of Theorem \ref{thm:SobolevResultSingleZero-2-2} holds when the assumption $w \in W^{s,2}(\T^d)$ is replaced by non--symmetric smoothness conditions where the function has different Sobolev regularities in each variable, see Theorem \ref{thm:nonsymMult}. (See Section \ref{sec:Sobolevdef} for the definition of these spaces)
\end{remark}
\begin{remark}\label{rem:pqMultipliers}
For  $1\leq p\leq q\leq 2$, or $2\leq p\leq q\leq\infty$, Theorem \ref{thm:SobolevResultSingleZero-2-2} can be extended to the more general $(p,q)$--multipliers, see the discussion in Section \ref{subsec:PQMult}.
\end{remark}

\subsubsection{Zero sets of positive Hausdorff dimension}\label{subsec:LargerZeroSets}

Recall that for $S\subset\R^d$ and $\sigma>0$ the $\sigma$--dimensional Hausdorff measure of $S$ is defined by
\begin{equation}\label{eqn:HausDim}
\mathcal{H}^\sigma(S)=\lim_{\delta\rightarrow 0}\inf \left\{ \sum_i \text{diam}(U_i)^\sigma: S \subset \bigcup U_i \textrm{ where the sets $U_i$ satisfy } \text{diam}(U_i)<\delta\right\}.
\end{equation}

Our next result extends Theorem \ref{thm:SobolevResultSingleZero-2-2} to zero sets of positive Hausdorff dimension. To allow the consideration of non--continuous functions, we define the zero set of a function $w\in L^1(\T^d)$ to be
\begin{align}	\label{eqn:ZeroSetDef}
	\Sigma(w)&=\left\{x \in \T^d: \limsup_{\tau\rightarrow 0} \frac{1}{|I_\tau(x)|} \int_{I_\tau(x)} |w(y)| dy = 0\right\},
\end{align}
where $I_\tau(x)$ is the cube of width $2\tau$ centered at $x$.  We note that part (ii) of Theorem \ref{thm:ScalarValuedLargeZeroResult} follows from the main results of \cite{JL, SC}.

\begin{theorem}[Nitzan, Northington, Powell]\label{thm:ScalarValuedLargeZeroResult}
 Let $(d-\sigma)/2 <s \le 1$ and $w \in W^{s,2}(\T^d)$. Suppose that $\mathcal{H}^\sigma(\Sigma(w))>0$.
\begin{itemize}
	\item[(i)] If $s < d-\sigma$, then $u=\frac{1}{w} \notin \mathcal{M}_2^q$ for any $q$ satisfying $2 \le q \le \frac{d}{d-s-\frac{\sigma}{2}}$.
	\item[(ii)] If {$s=d-\sigma$}, then $u=\frac{1}{w} \notin L^2(\T^d)$, and thus $u \notin \mathcal{M}_2^q$ for any $q\geq 2$.
	\end{itemize}
\end{theorem}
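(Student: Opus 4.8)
The plan is to prove (i) by contradiction, testing the defining inequality of $\MM_2^q$ against random superpositions of small bumps placed along $\Sigma(w)$; (ii) I would simply quote from \cite{JL,SC}, as the text already indicates. So suppose $u=1/w\in\MM_2^q$ for some $q$ with $2\le q\le q_0:=\frac{d}{d-s-\sigma/2}$ (note $2<q_0<\infty$, since $\frac{d-\sigma}{2}<s<d-\sigma$). By the very definition of $\MM_2^q$ we have $u\in L^2(\T^d)$, so $\{w=0\}$ is Lebesgue--null and $\|\widehat{ug}\|_{\ell^q(\Z^d)}\le\|u\|_{\MM_2^q}\|g\|_{L^2(\T^d)}$ for every $g\in L^2(\T^d)$. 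Fix a nonnegative $\phi\in C_c^\infty(\R^d)$ with $\operatorname{supp}\phi\subseteq B(0,\tfrac12)$ and $\int\phi=1$, and by Frostman's lemma fix a compact $K\subseteq\Sigma(w)$ with $0<\mathcal H^\sigma(K)<\infty$ on which $\mathcal H^\sigma$ is upper $\sigma$--regular. For small $\tau>0$ choose a maximal $\tau$--separated set $\{x_1,\dots,x_N\}\subseteq K$; maximality gives $K\subseteq\bigcup_jB(x_j,\tau)$, hence $\tfrac12\mathcal H^\sigma(K)\le\mathcal H^\sigma_{2\tau}(K)\le N(2\tau)^\sigma$, so $N=N(\tau)\asymp\tau^{-\sigma}$ (thin the set out if the count is larger). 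Put $G_\epsilon(x)=\sum_j\epsilon_j\phi\!\left(\frac{x-x_j}{\tau}\right)$ for $\epsilon\in\{\pm1\}^N$; the bumps have pairwise disjoint supports contained in $K_\tau:=\{x:\operatorname{dist}(x,K)<\tau\}$.

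Next I would estimate the two sides. On the $\ell^q$ side, $\widehat{G_\epsilon}(k)=\tau^d\widehat\phi(\tau k)\sum_j\epsilon_je^{-2\pi ik\cdot x_j}$, and Jensen's inequality (or Khintchine, $q\ge2$) gives $\mathbb E_\epsilon|\sum_j\epsilon_je^{-2\pi ik\cdot x_j}|^q\ge N^{q/2}$; summing over $k$ and using $\sum_k|\widehat\phi(\tau k)|^q\gtrsim\tau^{-d}$ one gets $\mathbb E_\epsilon\|\widehat{G_\epsilon}\|_{\ell^q}^q\gtrsim\tau^{d(q-1)}N^{q/2}$, so some $\epsilon^\ast$ satisfies $\|\widehat{G_{\epsilon^\ast}}\|_{\ell^q}\gtrsim\tau^{d/q'}N^{1/2}$. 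On the $L^2$ side, disjointness of the bumps gives $\|wG_{\epsilon^\ast}\|_{L^2}^2\le\|\phi\|_\infty^2\int_{K_\tau}|w|^2$, and the crucial input is the scaled trace inequality
\[
\int_{K_\tau}|w(x)|^2\,dx\ \lesssim\ \varepsilon(\tau)\,\tau^{2s},\qquad \varepsilon(\tau)\to0\ \text{as}\ \tau\to0,
\]
valid for $w\in W^{s,2}(\T^d)$ with $0<s\le1$, $\sigma>d-2s$, and $K\subseteq\Sigma(w)$ as above. Since $u(wG_{\epsilon^\ast})=G_{\epsilon^\ast}$ a.e., feeding $g=wG_{\epsilon^\ast}$ into the multiplier inequality yields
\[
\tau^{d/q'}N^{1/2}\ \lesssim\ \|\widehat{G_{\epsilon^\ast}}\|_{\ell^q}\ \le\ \|u\|_{\MM_2^q}\,\|wG_{\epsilon^\ast}\|_{L^2}\ \lesssim\ \varepsilon(\tau)^{1/2}\,\tau^{s}.
\]
With $N\asymp\tau^{-\sigma}$ this becomes $\tau^{\,d/q'-\sigma/2-s}\lesssim\varepsilon(\tau)^{1/2}$, and since $q\le q_0$ forces $d/q'-\sigma/2-s\le0$, the left--hand side is $\gtrsim1$ for all small $\tau$, contradicting $\varepsilon(\tau)\to0$. (For $q<q_0$ the exponent is strictly negative and the contradiction is even cruder; the factor $\varepsilon(\tau)\to0$ is precisely what captures the endpoint $q=q_0$.) This proves (i); as already noted, (ii), which is the borderline case $s=d-\sigma$, follows from the main results of \cite{JL,SC}.

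The hard part is the scaled trace inequality, and it is the only place where $s>\frac{d-\sigma}{2}$ and $K\subseteq\Sigma(w)$ are genuinely used. When $s>\tfrac d2$ I would cover $K$ by a bounded--overlap family of balls $B(x_i,\tau)$ centered in $K$, apply the fractional Poincar\'e inequality on each $B(x_i,2\tau)$, and bound the averages $\frac{1}{|B(x_i,r)|}\int_{B(x_i,r)}w$ by telescoping, using that $x_i\in\Sigma(w)$ forces these averages to tend to $0$; adding up gives $\int_{K_\tau}|w|^2\lesssim\tau^{2s}\iint_{|x-y|<C\tau}\frac{|w(x)-w(y)|^2}{|x-y|^{d+2s}}\,dx\,dy$, and the last integral is $\varepsilon(\tau)\to0$ by dominated convergence. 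When $s\le\tfrac d2$ pointwise values of $w$ and of its trace are no longer available, and one must instead use the $L^2(\mathcal H^\sigma|_K)$ trace of $w$, which is well defined because $s>\frac{d-\sigma}{2}$ and vanishes $\mathcal H^\sigma$--a.e.\ on $K$ because $K\subseteq\Sigma(w)$, together with a scale--localized Poincar\'e/trace inequality on $K_\tau$; this is where Maz'ya--Adams type trace inequalities and their localizations must be invoked.
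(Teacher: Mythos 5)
Your reduction of part (ii) to \cite{JL,SC} is exactly what the paper does, and your exponent arithmetic for part (i) is correct: with $N\asymp\tau^{-\sigma}$ the Khintchine gain of $N^{1/2}$ is precisely equivalent to what the paper obtains by testing with a single indicator per ball and summing $\tau_k^{\sigma}$ over $\approx\tau^{-\sigma}$ disjoint balls, so both routes land on the same threshold $q\le d/(d-s-\sigma/2)$. The problem is that everything has been pushed into the ``scaled trace inequality'' $\int_{K_\tau}|w|^2\lesssim\varepsilon(\tau)\tau^{2s}$, and in the only regime where Theorem \ref{thm:ScalarValuedLargeZeroResult} says something new, namely $s\le d/2$ (for $s>d/2$ a single zero already suffices by Theorem \ref{thm:SobolevResultSingleZero-2-2}, and there your H\"older-continuity argument works), this inequality is asserted rather than proved. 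What you need is a Poincar\'e--type bound at \emph{every} small scale $\tau$, uniformly over a covering of $K$ by balls $B_{2\tau}(x_i)$ with $x_i\in K$; ball by ball this requires a capacity density lower bound of the form $\mathrm{Cap}_{s,2}\bigl(K\cap B_\tau(x_i)\bigr)\gtrsim\tau^{d-2s}$. Frostman's lemma gives only upper $\sigma$--regularity of $K$, not this lower bound, and a set of positive finite $\mathcal{H}^\sigma$ measure can be arbitrarily sparse near some of its points at some scales (an extreme case: $K\cap B_\tau(x_i)$ a single point, which has zero $H^s$--capacity when $s<d/2$, so $w$ is not controlled on that ball at all). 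Since the dangerous contribution of such bad balls is only negligible if their number is small, your aggregate bound does not follow from the good balls alone, and the citation of ``Maz'ya--Adams type trace inequalities and their localizations'' does not close this: no off-the-shelf result gives the single-scale, uniform-over-$K$ statement you use.

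This is exactly the difficulty the paper's proof is built to avoid. Theorem \ref{thm:PI} (the Jiang--Lin/Schikorra machinery) does not produce a uniform scale; it produces a subset $T\subseteq\Sigma(w)$ with $\mathcal{H}^\sigma(T)>0$ and, for each $\epsilon>0$, a \emph{disjoint} family of balls $B_{\tau_k}(x_k)$ of \emph{varying} radii, selected by a stopping-time/Vitali argument precisely so that the localized Poincar\'e inequality $\|w\|_{L^2(B_k)}\lesssim\tau_k^{s}\|w\|_{\dot W^{s,2}(B_k)}$ holds on each of them, with $T\subset\bigcup B_{5\tau_k}(x_k)$ and $|\bigcup B_k|<\epsilon$. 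Feeding $\chi_{I_{\tau_k}(x_k)}$ into the multiplier inequality on each ball and using $q\le d/(d-s-\sigma/2)$ gives $\tau_k^{\sigma}\lesssim\|w\|^2_{\dot W^{s,2}(B_k)}$, and disjointness lets one sum to get $\sum_k\tau_k^{\sigma}\lesssim\|w\|^2_{\dot W^{s,2}(\bigcup B_k)}\to0$ as $\epsilon\to0$, contradicting $\mathcal{H}^\sigma(T)>0$ through the $5\tau_k$--cover. To repair your argument you would either have to prove your uniform trace inequality (which I do not believe holds for general $K$), or replace the single scale $\tau$ by the multi-scale selected family of Theorem \ref{thm:PI} --- at which point the random-bump construction loses its purpose and the proof collapses to the paper's.
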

\begin{remark}
 A version of Theorem \ref{thm:ScalarValuedLargeZeroResult} holds for $s\ge 1$, under some mild additional requirements.  See Remark \ref{rem:PoincareIneq-s} and Theorem \ref{thm:ScalarValuedLargeZeroGeneralResult}.
\end{remark}
\begin{remark}
Remarks \ref{rem:Wsr} and \ref{rem:pqMultipliers} hold also for Theorem \ref{thm:ScalarValuedLargeZeroResult}. See Theorem \ref{thm:ScalarValuedLargeZeroGeneralResult} and the discussion in Section \ref{subsec:PQMult}.
\end{remark}

Theorems \ref{thm:ScalarValuedLargeZeroResult} and  \ref{thm:ScalarValuedLargeZeroGeneralResult} should be compared to the main results in \cite{JL, SC}.
There,  H.~Jiang, F.~Lin, and A.~Schikorra
relate the Hausdorff dimension of the zero set of a function to the integrability properties of its inverse. More precisely, for  $0\le s\le 1$, they prove that if $w \in W^{s,2}(\T^d)$  and $\mathcal{H}^{\sigma}(\Sigma(w))>0$ then $1/w\notin L^{\frac{2q}{q-2}}(\T^d)$ whenever $q\le \frac{d-\sigma}{d-\sigma-s}$. The conditions $1/w \notin \mathcal{M}_2^q$ and $1/w\notin L^{\frac{2q}{q-2}}(\T^d)$ are related in one direction via the Hausdorff--Young inequality. Indeed, it is readily proved that $L^{\frac{2q}{q-2}}(\T^d)\subset \mathcal{M}_{2}^q(\T^d)$. The reverse inclusion does not hold (see e.g. Theorem 1.4.24 in \cite{Gr}).

In contrast to Theorem \ref{thm:SobolevResultSingleZero-2-2}, Theorem \ref{thm:ScalarValuedLargeZeroResult} does not appear to be sharp, and it is reasonable to ask if the range of the parameter $q$ in Theorem \ref{thm:ScalarValuedLargeZeroResult} can be extended to the
range $q\le \frac{d-\sigma}{d-\sigma-s}$ that appears in  \cite{JL, SC}.  For further discussion on this, see Section \ref{open problems}.

\subsubsection{Matrix valued Fourier multipliers}

Given a Banach space $B$ and $K\in\N$, let $[B]^K$ denote the space of all $K$--tuples with elements from $B$, and let $[B]^{K\times K}$ denote the space of all $K\times K$ matrices with elements from $B$.  That is, $[B]^K=\{G=(g_k)_{k=1}^K:g_k\in B\}$ and similarly for $[B]^{K\times K}$. In the cases where $B$ is an $L^p$ or $\ell^p$ space, we endow the spaces $[B]^K$ with the natural norms
\[ \|G\|_{[L^{q}]^K}=\left( \sum_{k=1}^K \|g_{k} \|_{L^q}^q  \right)^{1/q} \quad\textrm{and}\quad \|G\|_{[\ell^{q}]^K}=\left( \sum_{k=1}^K \|g_{k} \|_{\ell^q}^q \right)^{1/q}.\]
The Fourier transform $\mathcal{F}_K$ on $[L^1(\T^d)]^K$ is defined coordinate--wise by $\mathcal{F}_KG=(\mathcal{F} g_k)_{k=1}^K$. Note that $\|G\|_{[L^2]^K}=\|\mathcal{F}_K G\|_{[\ell^{2}]^K}$.

Given $2\leq q\leq\infty$, we say that $U\in [L^2(\T^d)]^{K\times K}$ is a \textit{matrix valued $(2,q)$--multiplier} if the operator $T_U: [\ell^2(\Z^d)]^K\rightarrow [\ell^q(\Z^d)]^K$ defined by
$$
T_U A= \mathcal{F}_K(U\mathcal{F}^{-1}_K{A})
$$
is bounded. We denote by $\mathfrak{M}_2^q(K)$ the family of all matrix valued $(2,q)$--multipliers.

For a Hermitian matrix valued function $U\in [L^2(\T^d)]^{K\times K}$, there exist matrix valued functions $V$ and $\Lambda$ such that $U=V^* \Lambda V$, where the entries of $V$ and $\Lambda$ are measurable functions, $V$ is unitary, and $\Lambda$ is a diagonal matrix with diagonal entries satisfying $\lambda_1(x)\ge \lambda_2(x) \ge \cdots \ge \lambda_K(x)$ for almost every $x\in \T^d$, (see e.g. Lemma 2.3.5 in \cite{RS}).  Throughout the paper, when we write $U=V^* \Lambda V$, we mean that $V$ and $\Lambda$ satisfy the conditions described above.

Our next result relates scalar valued multipliers and matrix valued multipliers.
\begin{theorem}[Nitzan, Northington, Powell]\label{thm:KMultEig}
Let $U\in [L^2(\T^d)]^{K\times K}$ be Hermitian with eigenvalues $\lambda_1(x)\ge \lambda_2(x)\ge \cdots \ge \lambda_K(x)$.  Then, $U \in \mathfrak{M}_2^q(K)$ if and only if $\lambda_k \in \mathcal{M}_2^q$  for each $1 \leq k \leq K$.
\end{theorem}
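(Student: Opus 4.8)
The plan is to prove both directions by reducing the matrix problem to the scalar problem through the spectral decomposition $U = V^*\Lambda V$, after first observing that we may freely modify $U$ by a measurable unitary conjugation without changing whether it is a multiplier.

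For the \emph{easier direction} ($U\in\mathfrak M_2^q(K)$ implies each $\lambda_k\in\MM_2^q$), first I would note that the diagonal matrix $\Lambda = \mathrm{diag}(\lambda_1,\dots,\lambda_K)$ is a matrix valued $(2,q)$--multiplier, since pointwise unitary conjugation by $V$ and $V^*$ is an isometry on $[L^2(\T^d)]^K$ in the $[\ell^2]^K$--norm on the frequency side (by Plancherel applied coordinatewise) and contracts the $[\ell^q]^K$--norm only by a bounded factor — here I need the elementary fact that a pointwise unitary multiplier $W$ on $[L^2]^K$ is bounded $[\ell^2]^K\to[\ell^q]^K$ with norm controlled by $K^{1/2-1/q}$, via $\|\cdot\|_{\ell^q}\le\|\cdot\|_{\ell^2}$ in each fibre together with the $K$ coordinates. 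Once $\Lambda$ is a multiplier, testing $T_\Lambda$ on tuples $A$ supported in a single coordinate $k$ (i.e. $A=(0,\dots,a,\dots,0)$) shows $T_{\lambda_k}$ is bounded $\ell^2\to\ell^q$, i.e. $\lambda_k\in\MM_2^q$.

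For the \emph{harder direction} ($\lambda_k\in\MM_2^q$ for all $k$ implies $U\in\mathfrak M_2^q(K)$), the same reduction gives that it suffices to prove $\Lambda\in\mathfrak M_2^q(K)$, since $U=V^*\Lambda V$ and conjugation by the unitary $V$, resp. $V^*$, is bounded on both sides as above; thus $T_U = W_{V^*}\circ T_\Lambda\circ W_V$ is a composition of bounded maps. Finally, for the diagonal matrix $\Lambda$, the operator $T_\Lambda$ acts \emph{coordinatewise}: $(T_\Lambda A)_k = T_{\lambda_k} a_k$. Hence, for $A=(a_k)_{k=1}^K$,
\[
\|T_\Lambda A\|_{[\ell^q]^K}^q = \sum_{k=1}^K \|T_{\lambda_k} a_k\|_{\ell^q}^q \le \sum_{k=1}^K \|\lambda_k\|_{\MM_2^q}^q \|a_k\|_{\ell^2}^q \le \Bigl(\max_k \|\lambda_k\|_{\MM_2^q}^q\Bigr)\sum_{k=1}^K\|a_k\|_{\ell^2}^q,
\]
and since $\sum_k\|a_k\|_{\ell^2}^q \le \bigl(\sum_k\|a_k\|_{\ell^2}^2\bigr)^{q/2} = \|A\|_{[\ell^2]^K}^q$ for $q\ge 2$ (again $\|\cdot\|_{\ell^q}\le\|\cdot\|_{\ell^2}$ on $\R^K$), this bounds $\|T_\Lambda\|_{[\ell^2]^K\to[\ell^q]^K}$ by $\max_k\|\lambda_k\|_{\MM_2^q}$, completing the argument.

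The step I expect to require the most care is the claim that a pointwise measurable unitary matrix $V$ induces a bounded operator on $[\ell^2(\Z^d)]^K \to [\ell^q(\Z^d)]^K$ — i.e. that $W_V := \mathcal F_K(V\,\mathcal F_K^{-1}\,\cdot\,)$ is bounded — together with the verification that the eigenvalue/eigenvector factorization in the stated measurability form may genuinely be used here. Boundedness of $W_V$ on $[\ell^2]^K$ is just Plancherel plus fibrewise unitarity, so $\|W_V\|_{[\ell^2]^K\to[\ell^2]^K}=1$; passing to the $[\ell^q]^K$ target is then free because $\|\cdot\|_{\ell^q}\le\|\cdot\|_{\ell^2}$, costing at worst a constant $K^{1/2-1/q}$. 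The only real subtlety is bookkeeping the constants so that finiteness of each $\|\lambda_k\|_{\MM_2^q}$ transfers to finiteness of $\|T_U\|$, and conversely; none of this affects the equivalence, only the implied norm bounds, which we do not need to track sharply.
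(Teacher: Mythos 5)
The factorization $T_U = W_{V^*}\circ T_\Lambda\circ W_V$ is formally correct and your treatment of the diagonal operator $T_\Lambda$ is fine, but the conjugation step does not close. To run the chain $[\ell^2]^K \xrightarrow{W_V}[\ell^2]^K\xrightarrow{T_\Lambda}[\ell^q]^K\xrightarrow{W_{V^*}}[\ell^q]^K$ you need $W_{V^*}$ to be bounded from $[\ell^q(\Z^d)]^K$ to itself, i.e.\ you need the entries of the measurable unitary $V$ to be $(q,q)$--Fourier multipliers. What your argument actually gives (Plancherel plus $\|\cdot\|_{\ell^q}\le\|\cdot\|_{\ell^2}$) is boundedness of $W_{V^*}$ from $[\ell^2]^K$ \emph{into} $[\ell^q]^K$ --- a different statement that cannot be composed with $T_\Lambda$, because $T_\Lambda$ only lands in $[\ell^q]^K$ (the $\lambda_k$ are not assumed bounded, so $T_\Lambda$ need not map into $[\ell^2]^K$). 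And the statement you need is false in general: for $q\neq 2$ the $\ell^q$ Fourier multipliers form a proper subclass of $L^\infty(\T^d)$, and a merely measurable unimodular entry of $V$ --- which is all the measurable diagonalization guarantees --- need not belong to it. The same defect occurs in your ``easier'' direction, where transferring the multiplier property from $U$ to $\Lambda$ requires $W_V$ bounded on $[\ell^q]^K$.

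The paper circumvents this by never letting $V$ act on the $\ell^q$ side. For the forward direction it fixes an eigenpair $(\lambda, v)$ with $\sum_j|v_j(x)|^2=1$, uses the pigeonhole to find a coordinate $j_0$ with $\|\mathcal{F}(|v_{j_0}|^2\lambda g)\|_{\ell^q}\ge K^{-1}\|\mathcal{F}(\lambda g)\|_{\ell^q}$, and applies the matrix multiplier to the test vector $g\overline{v_{j_0}}v\in[L^2(\T^d)]^K$, which lies in the eigendirection so that $U(g\overline{v_{j_0}}v)=(\lambda g\overline{v_{j_0}})v$. For the reverse direction it expands $\phi=\sum_k b_kv^k$ and applies the scalar multiplier $\lambda_k$ componentwise to the $L^2$ functions $b_kv^k_j$; the multiplication by the eigenvector components is absorbed into the $L^2$ test function \emph{before} the multiplier acts, where Plancherel controls everything. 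Your proposal needs some such device; the clean change--of--basis reduction genuinely fails outside $q=2$.
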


Theorem \ref{thm:KMultEig} allows the extension of Theorems \ref{thm:SobolevResultSingleZero-2-2} and \ref{thm:ScalarValuedLargeZeroResult} to matrix valued multipliers.

\begin{corollary}[Nitzan, Northington, Powell]\label{thm:SobolevResultSingleZero-2-2-matrix}
Let $\frac{d}{2}<s\le d$ and let $W \in [L^1(\T^d)]^{K\times K}$ be Hermitian with eigenvalues $\lambda_1(x)\ge \lambda_2(x) \ge \cdots \ge \lambda_K(x)$. If $\lambda_k\in W^{s,2}(\T^d)$ for every $k=1,...,K$, and $\det(W)$ has a zero, then conclusions (i) and (ii) of Theorem \ref{thm:SobolevResultSingleZero-2-2} hold with $U=W^{-1}$ replacing $u=1/w$ and with $\mathfrak{M}_2^q(K)$ replacing $\mathcal{M}_2^q$. In particular, the condition `$\lambda_k \in W^{s,2}(\T^d)$ for every $k$' may be replaced by the condition `$W$ is a nonnegative matrix valued function satisfying  $W \in [W^{s,2}(\T^d)]^{K\times K}$'.
\end{corollary}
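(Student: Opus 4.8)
The plan is to reduce the entire statement to the scalar results already established, namely Theorems \ref{thm:SobolevResultSingleZero-2-2} and \ref{thm:KMultEig}. Two structural facts are used throughout: $\det W(x)=\prod_{k=1}^{K}\lambda_k(x)$, and the eigenvalues of $U=W^{-1}$ are the numbers $1/\lambda_k(x)$, defined for almost every $x$, i.e.\ wherever $\det W(x)\neq 0$ --- exactly as $u=1/w$ is defined a.e.\ in the scalar case. Moreover, since $s>d/2$ we have $W^{s,2}(\T^d)\hookrightarrow C(\T^d)$, so we may take continuous representatives of the $\lambda_k$, and then $\det W$ is continuous.

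\textbf{Non-membership.} Since $\det W$ is continuous and vanishes at some $x_0\in\T^d$, there is an index $j$ with $\lambda_j(x_0)=0$. The scalar function $w:=\lambda_j$ lies in $W^{s,2}(\T^d)$ and has a zero, so Theorem \ref{thm:SobolevResultSingleZero-2-2} gives $1/\lambda_j=1/w\notin\mathcal{M}_2^q$ for $2\le q\le d/(d-s)$ in case (i) and for $2\le q<2d/(d-2)$ in case (ii). If $W^{-1}\notin[L^2(\T^d)]^{K\times K}$ then $W^{-1}\notin\mathfrak{M}_2^q(K)$ by the very definition of a matrix valued multiplier; otherwise $W^{-1}\in[L^2(\T^d)]^{K\times K}$ is Hermitian with eigenvalues $\{1/\lambda_k\}$, so Theorem \ref{thm:KMultEig} applies, and were $W^{-1}\in\mathfrak{M}_2^q(K)$, every $1/\lambda_k$ --- in particular $1/\lambda_j$ --- would lie in $\mathcal{M}_2^q$, a contradiction. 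Either way $W^{-1}\notin\mathfrak{M}_2^q(K)$, which is the first half of (i) and all of (ii).

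\textbf{The converse in (i).} Given $q>d/(d-s)$, take the scalar example $w\in W^{s,2}(\T^d)$ of Theorem \ref{thm:SobolevResultSingleZero-2-2}(i), with $w$ having a zero and $1/w\in\mathcal{M}_2^q$, and set $W:=w\,I_K$. All its eigenvalues equal $w\in W^{s,2}(\T^d)$, so the hypotheses hold; $\det W=w^{K}$ has a zero; and $W^{-1}=(1/w)\,I_K$ has every eigenvalue equal to $1/w\in\mathcal{M}_2^q\subset L^2(\T^d)$, so $W^{-1}\in[L^2(\T^d)]^{K\times K}$ and Theorem \ref{thm:KMultEig} gives $W^{-1}\in\mathfrak{M}_2^q(K)$.

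\textbf{The ``in particular'' clause, and the main obstacle.} It remains to check that a nonnegative Hermitian $W\in[W^{s,2}(\T^d)]^{K\times K}$ automatically satisfies $\lambda_k\in W^{s,2}(\T^d)$ for all $k$, so that the preceding argument applies. Since $s>d/2$, the entries of $W$ lie in $W^{s,2}(\T^d)$, which for $s>d/2$ is a Banach algebra contained in $C(\T^d)$, and Weyl's inequality $|\lambda_k(A)-\lambda_k(B)|\le\|A-B\|$ presents each $\lambda_k$ as a globally Lipschitz function of the entries of $W$; when $s\le 1$ (which, under the hypothesis $d/2<s$, forces $d=1$) the boundedness of the superposition operator $g\mapsto F\circ g$ on $W^{s,2}$ for Lipschitz $F$ finishes the job. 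For $s>1$ this last step is genuinely delicate: composition with a merely Lipschitz map need not preserve $W^{s,2}$, since eigenvalue crossings produce square--root type singularities, so $\lambda_k$ need not be globally in $W^{s,2}(\T^d)$. The point to exploit is that only the behaviour of $1/w$ near the zero set of $\det W$ enters the non-membership argument, and near a zero $x_0$ of $\det W$ the vanishing eigenvalue $\lambda_K$ is --- except in genuinely degenerate configurations --- a simple eigenvalue and so inherits the $W^{s,2}$-regularity of the entries of $W$ locally, which is enough; the degenerate cases would be handled by a localized rerun of the proof of Theorem \ref{thm:SobolevResultSingleZero-2-2}. Carrying out this regularity reduction cleanly is where I expect the bulk of the work to concentrate.
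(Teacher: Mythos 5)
Your reduction of the main assertion to Theorems \ref{thm:KMultEig} and \ref{thm:SobolevResultSingleZero-2-2} is correct and is exactly the paper's argument (continuity of the $\lambda_k$ from $s>d/2$, factoring $\det W=\prod\lambda_k$ to locate a vanishing eigenvalue, then passing the multiplier property to $1/\lambda_j$ via Theorem \ref{thm:KMultEig}); the diagonal example $W=wI_K$ for the converse is also fine. The problem is the ``in particular'' clause, where you correctly identify the obstacle but do not overcome it. Note that for every $d\ge 2$ the hypothesis $s>d/2$ forces $s>1$, so the case you leave open is not a corner case --- it is the entire statement in all dimensions above one. Your sketched repair (the vanishing eigenvalue is generically simple near a zero of $\det W$, with ``degenerate configurations'' deferred to a localized rerun of the scalar proof) is not carried out and is not obviously salvageable: eigenvalue crossings can accumulate at the zero of $\det W$, and the scalar proof needs the H\"older-type localization \eqref{holder-embed-thm-eq} for the eigenvalue itself, which is precisely what is in question.

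The paper closes this gap with a different and much cleaner device: the Sobolev embedding. By Theorem \ref{thm:SobEmb} one replaces $W^{s,2}(\T^d)$ with $W^{\tilde s,\tilde r}(\T^d)$ where $\tilde s<1$ and $s-d/2=\tilde s-d/\tilde r$. The critical exponent in Theorem \ref{thm:SobolevResultSingleZero-r-2}, namely $q\le d/(d(1/2+1/r)-s)=d/(d/2-\alpha)$, depends only on $\alpha=s-d/r$, so nothing is lost in this change of spaces. Once $\tilde s<1$, the seminorm $\|\cdot\|_{\dot W^{\tilde s,\tilde r}}$ involves only first differences $|f(x+y)-f(x)|$, so Weyl's inequality $|\lambda_k(W(x))-\lambda_k(W(y))|\le\bigl(\sum_{i,j}|W_{ij}(x)-W_{ij}(y)|^2\bigr)^{1/2}$ (Lemma 4.3 of \cite{HNP}) immediately yields $\lambda_k\in W^{\tilde s,\tilde r}(\T^d)$ for all $k$, globally, with no discussion of eigenvalue multiplicity. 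Your own observation that superposition with Lipschitz maps behaves well only for $s\le 1$ is exactly the reason the paper first lowers the smoothness index; you should make that reduction rather than attempt a local perturbation argument around the zero set.
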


\begin{corollary}[Nitzan, Northington, Powell]\label{thm:ScalarValuedLargeZeroResult-matrix}
Let $ (d-\sigma)/2 <s\le 1$ and let $W \in [L^1(\T^d)]^{K\times K}$ be Hermitian with eigenvalues given by $\lambda_1(x)\ge \lambda_2(x) \ge \cdots \ge \lambda_K(x)$. If $\lambda_k\in W^{s,2}(\T^d)$  for every $k=1,...,K$, and $\mathcal{H}^\sigma(\Sigma(\det(W))>0$, then conclusions (i) and (ii) of Theorem \ref{thm:ScalarValuedLargeZeroResult} hold with $U=W^{-1}$ replacing $u=1/w$ and with $\mathfrak{M}_2^q(K)$ replacing $\mathcal{M}_2^q$. In particular, the condition `$\lambda_k \in W^{s,2}(\T^d)$ for every $k$' may be replaced by the condition `$W$ is a nonnegative matrix valued function satisfying  $W \in [W^{s,2}(\T^d)]^{K\times K}$'.
 \end{corollary}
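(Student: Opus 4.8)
The plan is to reduce the matrix-valued statement to the already-established scalar result, Theorem \ref{thm:ScalarValuedLargeZeroResult}, via the spectral decomposition of $W$ and the equivalence furnished by Theorem \ref{thm:KMultEig}. First I would write $W = V^* \Lambda V$ with $\Lambda = \mathrm{diag}(\lambda_1,\dots,\lambda_K)$ as described above, so that $U = W^{-1}$ is Hermitian with eigenvalues $1/\lambda_1(x) \le \cdots \le 1/\lambda_K(x)$, defined a.e.\ on the set where $\det(W) \ne 0$. By Theorem \ref{thm:KMultEig}, $U \in \mathfrak{M}_2^q(K)$ if and only if $1/\lambda_k \in \mathcal{M}_2^q$ for every $k$. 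Thus the corollary will follow once I show that at least one $\lambda_k$ satisfies the hypotheses of Theorem \ref{thm:ScalarValuedLargeZeroResult} with a zero set of positive $\sigma$-dimensional Hausdorff measure; applying that theorem to the corresponding $1/\lambda_k$ then yields $1/\lambda_k \notin \mathcal{M}_2^q$ in the stated range of $q$, hence $U \notin \mathfrak{M}_2^q(K)$, and likewise for part (ii) with $L^2$ in place of $\mathcal{M}_2^q$.

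The key step is therefore the claim that $\mathcal{H}^\sigma(\Sigma(\det W)) > 0$ forces $\mathcal{H}^\sigma(\Sigma(\lambda_K)) > 0$, where $\lambda_K$ is the smallest eigenvalue. Since $W(x)$ is nonnegative (or at least the relevant eigenvalues are controlled), we have $\det W(x) = \prod_{k=1}^K \lambda_k(x)$, and the smallest eigenvalue controls the determinant through $\lambda_K(x)^K \le \det W(x) \le \lambda_1(x)^{K-1}\lambda_K(x)$. The upper bound, together with the fact that $\lambda_1 \le (\sum_k \lambda_k^2)^{1/2} \le \|W\|_{[L^\infty]^{K\times K}}$-type pointwise bounds on a set of full measure, shows that at any point $x$ where the averaged quantity defining $\Sigma(\det W)$ vanishes, the corresponding averaged quantity for $\lambda_K$ also vanishes; care is needed because the zero set in \eqref{eqn:ZeroSetDef} is defined through averages of $|w|$ rather than pointwise vanishing, so I would work with the Lebesgue-point structure of $\lambda_K$ and $\det W$ (both are in $L^1$, indeed $\lambda_K \in W^{s,2} \subset L^1$), noting that away from a null set the Lebesgue values coincide with the pointwise values and the factorization gives $\Sigma(\det W) \subset \Sigma(\lambda_K) \cup N$ for a null set $N$. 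Since $\mathcal{H}^\sigma$ with $\sigma > 0$ assigns measure zero to Lebesgue-null sets only when $\sigma = d$—here I instead use that $N$ has Lebesgue measure zero and, by choosing representatives carefully, can be taken to not affect the positive-measure portion of $\Sigma(\det W)$—we conclude $\mathcal{H}^\sigma(\Sigma(\lambda_K)) > 0$.

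Finally, I would dispatch the last sentence of the corollary: if $W$ is a nonnegative matrix-valued function with all entries in $W^{s,2}(\T^d)$, then each $\lambda_k$ is a root of the characteristic polynomial whose coefficients are polynomials in the entries of $W$, hence in $W^{s,2}(\T^d)$ for $0 \le s \le 1$ by the algebra and composition properties of these Sobolev spaces in this regularity range (products and Lipschitz functions of $W^{s,2}$ functions remain in $W^{s,2}$ when $s \le 1$, using e.g.\ the Gagliardo seminorm characterization); since eigenvalue maps of Hermitian matrices are Lipschitz, each $\lambda_k \in W^{s,2}(\T^d)$. The main obstacle I anticipate is the measure-theoretic bookkeeping in the second paragraph: the zero set \eqref{eqn:ZeroSetDef} is an averaged notion, not a pointwise one, so showing rigorously that the positive-Hausdorff-measure portion of $\Sigma(\det W)$ transfers to $\Sigma(\lambda_K)$ requires relating the averaging-to-zero condition for the product $\det W = \prod \lambda_k$ to that of its smallest factor, which I would handle by combining the pointwise factorization (valid a.e.) with the fact that on $\Sigma(\det W)$ the function $\det W$ has Lebesgue value $0$, and a product of $L^1$ functions that is Lebesgue-zero at $x$ with the other factors locally bounded forces the distinguished factor to be Lebesgue-zero there as well.
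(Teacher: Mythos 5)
Your overall strategy --- diagonalize $W$, use Theorem \ref{thm:KMultEig} to reduce to the scalar eigenvalue $\lambda_K$, and then apply the scalar Theorem \ref{thm:ScalarValuedLargeZeroResult} --- is exactly the paper's. The gap is in the key step, namely transferring positive Hausdorff measure from $\Sigma(\det W)$ to $\Sigma(\lambda_K)$. Your route through Lebesgue points only yields $\Sigma(\det W)\subset\Sigma(\lambda_K)\cup N$ with $N$ Lebesgue-null, and for $\sigma<d$ a Lebesgue-null set can carry positive (indeed infinite) $\mathcal{H}^\sigma$-measure, so this inclusion does not give $\mathcal{H}^\sigma(\Sigma(\lambda_K))>0$. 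The proposed escape, ``choosing representatives carefully,'' cannot work: $\Sigma$ as defined in \eqref{eqn:ZeroSetDef} is built from averages and is therefore independent of the representative. Moreover, the bound you actually deploy, $\det W\le \lambda_1^{K-1}\lambda_K$ combined with the other factors being ``locally bounded'' (above), runs in the wrong direction --- it bounds $\lambda_K$ from \emph{below} by a multiple of $\det W$ --- and the heuristic ``a product that is Lebesgue-zero at $x$ with the other factors locally bounded forces the distinguished factor to be Lebesgue-zero'' is false ($f\equiv 1$, $g(x)=|x|$ at $x=0$).

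The fix is the inequality you wrote down but did not use: for nonnegative $W$ one has $0\le\lambda_K(x)^K\le\det W(x)$ for a.e.\ $x$, and feeding this directly into the averages via the power mean inequality gives, for \emph{every} $x_0$ and $\tau>0$,
\[
\frac{1}{|I_\tau|}\int_{I_\tau}|\lambda_K|\,dx\;\le\;\Bigl(\frac{1}{|I_\tau|}\int_{I_\tau}|\lambda_K|^K\,dx\Bigr)^{1/K}\;\le\;\Bigl(\frac{1}{|I_\tau|}\int_{I_\tau}|\det W|\,dx\Bigr)^{1/K},
\]
whence $\Sigma(\det W)\subset\Sigma(\lambda_K)$ exactly, with no exceptional set and no Lebesgue-point analysis; this is the paper's argument. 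Two smaller points on the final clause: the membership $\lambda_k\in W^{s,2}(\T^d)$ should be derived from the Lipschitz dependence of eigenvalues on a Hermitian matrix in the Frobenius norm (as in Lemma 4.3 of \cite{HNP}), not from the eigenvalues being roots of the characteristic polynomial (roots are not Lipschitz functions of the coefficients); and the Gagliardo-seminorm argument covers only $0<s<1$, so the endpoint $s=1$, which is allowed here, needs the separate difference-quotient argument via Theorem 5.8.3 of \cite{Evans}.
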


\subsection{Applications to time--frequency analysis}\label{subsec:TFApps}

We now turn to discuss applications of the above stated results to time--frequency analysis.

Recall that a system $\{f_n\}$ in a separable Hilbert space $H$ is a \textit{Riesz basis}
if it is the image of an orthonormal basis under a bounded and invertible operator, that is, if it is  complete in $H$ and there exist positive constants
$A$ and $B$ such that for all finite sequences $\{a_n\}$,
\begin{equation}\label{riesz-cond}
A\sum |a_n|^2\leq \left\|\sum a_nf_n\right\|^2_H\leq B\sum |a_n|^2.
\end{equation}
Note that $\{f_n\}$ is a Riesz basis if and only if every $f\in H$ can be decomposed in a unique way into a series $f=\sum a_nf_n$
with a norm equivalence between the $\ell^2$ norm of the coefficients $\{a_n\}$ and the norm of $f$.
Any system for which the right hand inequality in (\ref{riesz-cond}) holds is called a \textit{Bessel system}.

Further, a system $\{h_n\} \in H$ is called {\em minimal} if each of its elements lies outside the closed linear span of the remaining elements.
A system which is both complete and minimal is called \emph{exact}.  Every Riesz basis is exact, but the converse is not true.
For further background on Riesz bases, Bessel systems and exact systems see \cite{HPrimer}.

\subsubsection{Generators of Gabor systems}

Given $g\in L^2(\R)$, the {\em Gabor system} generated by $g$ over the lattice $\Z^2$ is defined by
\[
G(g)= \{e^{2\pi i m x}g(x-n)\}_{(m,n) \in \Z^2}.
\]
The Balian--Low theorem, \cite{Bal81,Low,Dau,Bat88},  states that if $G(g)$ is a Riesz basis for $L^2(\R)$, then $g$ must have much worse time--frequancy localization then what the uncertainty principle permits. More precisely, for $t \geq 2$
\begin{align}\label{eqn:CqGaborBLTIntegrals}
\int_{\R} |x|^{t} |g(x)|^2 dx = \infty  \ \ \ \text{   or   } \ \ \ \int_{\R} |\xi|^{t} |\widehat{g}(\xi)|^2 d\xi = \infty.
\end{align}
This result is sharp in the sense that the conclusion \eqref{eqn:CqGaborBLTIntegrals} fails if $t<2$, see \cite{BCGP}.

The Balian--Low theorem has inspired a large body of work during the last 25 years and has been extended in many directions.
In particular, I.~Daubechies and A.~J.~E.~M.~Janssen prove in \cite{DJ} that if $G(g)$ is merely an exact system then  (\ref{eqn:CqGaborBLTIntegrals}) holds whenever $t\geq 4$, and this is sharp. Further, in \cite{NO1} S.~Nitzan and J.--F.~Olsen give a collection of Balian--Low theorems which interpolate the results for Riesz bases and for exact systems,
by considering the intermediate class of the so called \textit{exact ($C_q$)--systems}.

Given $q\geq 2$, a system $\{f_n\}$ in a Hilbert space $H$ is called a {\em ($C_q$)--system}
if there exists $C>0$ such that every $f\in H$ can be approximated arbitrarily well by a finite linear combination $\sum a_nf_n$ with $\|a_n\|_{\ell^q}\leq C\|f\|_H$.
The study of ($C_q$)--systems originated in \cite{NHO}, where S.~Nitzan and A.~Olevskii studied possible density restrictions on exponential systems of this type. The name `($C_q$)--system'  emphasizes that such systems are `\textit{complete with $\ell^q$ control on the coefficients}'.
We note that $\{f_n\}$ is an exact ($C_q$)--system if and only if it is complete and there exists $D>0$ such that
\begin{equation}\label{Cq-cond}
D \left(\sum|a_n|^q\right)^{\frac{1}{q}}\leq \left\|\sum a_nf_n \right\|_H,
\end{equation}
for any finite sequence $\{a_n\}$, \cite{NIT} (see also Theorem 3 in \cite{NO1}). This condition should be compared with (\ref{riesz-cond}).

It is readily checked that if $\widetilde{q}>q$ then every $(C_{\widetilde{q}})$--system is also a $(C_{q})$--system, and that an exact system is a Bessel ($C_2$)--system if and only if it is a Riesz basis. Finally, we note that a Gabor system $G(g)$ is an exact system if and only if it is an exact ($C_{\infty}$)--system. With this we conclude that for Gabor systems, the family of exact ($C_q$)--systems provides a family of systems that range between Riesz bases and exact systems as $q$ ranges between two and infinity.

In \cite{NO1}, S.~Nitzan and J.--F.~Olsen prove that if $2\leq q\leq \infty$ and $G(g)$ is an exact ($C_q$)--system, then (\ref{eqn:CqGaborBLTIntegrals}) holds for $t > 4/q'$ where $q'=q/(q-1)$.  The result obtained in \cite{NO1} is almost sharp in the sense that for every  $t < 4/q'$ there exists $g\in L^2(\R)$ for which $G(g)$ is an exact ($C_q$)--system and both integrals in  (\ref{eqn:CqGaborBLTIntegrals}) converge.  However, the critical--exponent case of $t = 4/q'$ remained unsettled.  We resolve this critical case with the following theorem.

\begin{theorem}[Nitzan, Northington, Powell]\label{thm:CqGabor}
Fix $q> 2$.  Let $g \in L^2(\R)$.  If $G(g)$ is an exact $(C_q)$--system for $L^2(\R)$, then
(\ref{eqn:CqGaborBLTIntegrals}) holds whenever $t \ge 4/q'$, where $q'=q/(q-1)$. This result is sharp in the sense that it fails when $t < 4/q'$.
\end{theorem}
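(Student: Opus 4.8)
The plan is to pass to the torus $\T^2$ via the Zak transform and then read the statement off of Theorem~\ref{thm:SobolevResultSingleZero-2-2} with $d=2$. Recall that $Zg(x,\xi)=\sum_{k\in\Z}g(x+k)e^{-2\pi i k\xi}$ defines a unitary map from $L^2(\R)$ onto $L^2(\T^2)$ which conjugates time--frequency shifts to modulations, $Z\big(e^{2\pi i m(\cdot)}g(\cdot-n)\big)(x,\xi)=e^{2\pi i(mx+n\xi)}Zg(x,\xi)$. Hence for every finite sequence $a=(a_{m,n})$ one has $\big\|\sum_{m,n}a_{m,n}e^{2\pi i m(\cdot)}g(\cdot-n)\big\|_{L^2(\R)}=\|P_a\cdot Zg\|_{L^2(\T^2)}$, where $P_a$ is the trigonometric polynomial with $\mathcal F P_a=a$. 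Writing $w=Zg$, $u=1/w$, and letting $b=P_a w$ range over $\{P_a w:\ a\ \text{finite}\}$, so that $P_a=ub$ and $\mathcal F P_a=T_u(\mathcal F b)$, the defining inequality \eqref{Cq-cond} reads $D\,\|T_u(\mathcal F b)\|_{\ell^q}\le\|\mathcal F b\|_{\ell^2}$. Since $\{P_a w\}$ is dense in $L^2(\T^2)$ exactly when $w\neq 0$ a.e.\ --- which is the completeness of $G(g)$ --- the set $\{\mathcal F b\}$ is then dense in $\ell^2(\Z^2)$, and I obtain the first dictionary: \emph{$G(g)$ is an exact $(C_q)$--system if and only if $u=1/Zg\in\mathcal M_2^q(\T^2)$.}

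The second ingredient is the classical smoothness dictionary for the Zak transform. From the quasi-periodicity relation $Z(xg)=x\,Zg-(2\pi i)^{-1}\partial_\xi Zg$ (and its fractional iterates), together with the dual identity on the Fourier side relating the $\xi$--weight of $\widehat g$ to $\partial_x Zg$, one gets for $t\ge0$ that $\int_\R|x|^t|g(x)|^2\,dx<\infty$ is equivalent to $Zg$ having $W^{t/2,2}$--smoothness in the variable $\xi$, and $\int_\R|\xi|^t|\widehat g(\xi)|^2\,d\xi<\infty$ is equivalent to $Zg$ having $W^{t/2,2}$--smoothness in $x$; the quasi-periodicity in $x$ is carried by the smooth factor $e^{2\pi i\xi}$ and causes no harm. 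In particular, if both integrals in \eqref{eqn:CqGaborBLTIntegrals} are finite for some $t$ then $Zg\in W^{t/2,2}(\T^2)$, while conversely $W^{t/2,2}(\T^2)$--regularity of $Zg$ forces both integrals to be finite for that $t$ (using $W^{s,2}(\T^2)\subset W^{s,2}$ in each variable separately).

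With these two dictionaries the main implication is immediate. It suffices to treat $t=4/q'$, since for $g\in L^2(\R)$ the integral $\int_\R|x|^t|g|^2$ is infinite for every $t>t_0$ as soon as it is infinite for $t_0$, and likewise on the frequency side. So suppose $G(g)$ is an exact $(C_q)$--system and both integrals in \eqref{eqn:CqGaborBLTIntegrals} converge for $t=4/q'$. Then $w=Zg\in W^{s,2}(\T^2)$ with $s=t/2=2/q'$, and since $q>2$ we have $1=d/2<s<d/2+1=2$, so $w$ has a continuous representative. If $w$ never vanishes it is bounded below, so $G(g)$ is a Gabor Riesz basis; as $t=4/q'>2$, the finiteness of both integrals then contradicts the classical Balian--Low theorem. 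If $w$ has a zero, then since $s=2/q'$ gives precisely $\frac{d}{d-s}=\frac{2}{\,2-2/q'\,}=q$, part~(i) of Theorem~\ref{thm:SobolevResultSingleZero-2-2} yields $u=1/w\notin\mathcal M_2^q$, contradicting the first dictionary. Hence \eqref{eqn:CqGaborBLTIntegrals} holds for all $t\ge 4/q'$.

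For the sharpness claim, fix $t<4/q'$ and set $s=t/2<2/q'$; then $2-s>2/q$, so $q>\frac{2}{2-s}=\frac{d}{d-s}$ with $d=2$, and the converse assertion of part~(i) of Theorem~\ref{thm:SobolevResultSingleZero-2-2} furnishes $w\in W^{s,2}(\T^2)$ whose continuous representative has a zero and with $1/w\in\mathcal M_2^q$ (note $1/w\in\mathcal M_2^q\subset L^2(\T^2)$ forces $w\neq0$ a.e.). Taking $g=Z^{-1}w\in L^2(\R)$, the first dictionary makes $G(g)$ an exact $(C_q)$--system while the second makes both integrals in \eqref{eqn:CqGaborBLTIntegrals} converge for this $t$, so the conclusion fails when $t<4/q'$. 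I expect the bulk of the work to lie not in the argument above but in the two dictionaries: verifying the equivalence of exact $(C_q)$--systems with membership of $1/Zg$ in $\mathcal M_2^q$ requires a careful density and extension argument for the operator $T_u$, and the fractional weight--versus--smoothness translation must be set up so as to respect the quasi-periodic boundary behaviour of the Zak transform. Once these are in place, all the analytic content of Theorem~\ref{thm:CqGabor} is packaged in Theorem~\ref{thm:SobolevResultSingleZero-2-2}.
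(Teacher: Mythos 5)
Your first ``dictionary'' is exactly Proposition \ref{prop:CqGaborFM} of the paper (the paper phrases it with $1/|Zg|$, but since the multiplier condition $\|\widehat h\|_{\ell^q}\lesssim\|wh\|_{L^2}$ depends only on $|w|$, this is the same), and your main implication follows the paper's route: the paper obtains Theorem \ref{thm:CqGabor} as the diagonal case $r=t$ of the anisotropic Theorem \ref{thm:nonsymmetricBLT}, via Proposition \ref{local} and the local multiplier statement, whereas you invoke the symmetric Theorem \ref{thm:SobolevResultSingleZero-2-2} directly; on the diagonal these coincide. One correction there: the claim that finiteness of both integrals yields $Zg\in W^{t/2,2}(\T^2)$ \emph{globally} is false, since $Zg$ is only quasi-periodic, $Zg(x+1,y)=e^{2\pi i y}Zg(x,y)$, and in general does not define a function on $\T^2$ at all. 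The correct statement, which is what the paper proves in Proposition \ref{local}, is local: near every point $Zg$ agrees with some Sobolev function. Because the proof of Theorem \ref{thm:SobolevResultSingleZero-r-2} is purely local (see the remark following it), this suffices; you flag the issue, so the forward direction is sound once the localization is made explicit. (Your case split when $Zg$ has no zero is unnecessary: continuity together with quasi-periodicity forces a zero, as the paper notes.)

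The sharpness argument, however, has a genuine gap. You take the periodic function $w\in W^{s,2}(\T^2)$ furnished by the converse half of Theorem \ref{thm:SobolevResultSingleZero-2-2} and set $g=Z^{-1}w$. But $Z^{-1}$ inverts $Z\colon L^2(\R)\to L^2([0,1]^2)$, and the Zak transform of the resulting $g$ is the \emph{quasi-periodic} extension of $w|_{[0,1]^2}$, not its periodic extension. The example $w_\beta$ of Proposition \ref{prop:MultCounter} equals $1$ near the boundary of the fundamental domain, so the quasi-periodic extension jumps by $(e^{2\pi i y}-1)w(0,y)$ across the lines $x\in\Z$; this jump discontinuity in the $x$-direction already forces $\int_\R|\xi|^{t}|\widehat g(\xi)|^2\,d\xi=\infty$ for $t>1$, whereas you need convergence for $t$ close to $4/q'>2$. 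In other words, your second dictionary applies to the quasi-periodic extension of $Zg$, not to the periodic representative, and a sharpness example must be constructed as a genuinely quasi-periodic function with a single zero of controlled order (the zero being forced by quasi-periodicity rather than inserted by hand). The paper does not reprove sharpness; it cites the examples of \cite{NO1}, which are of exactly this quasi-periodic type.
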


\begin{remark} Following \cite{NO1} (see also \cite{Gautam}, \cite{HPExact}), in Theorem \ref{thm:nonsymmetricBLT} we prove a non--symmetric extension of Theorem \ref{thm:CqGabor} that replaces \eqref{eqn:CqGaborBLTIntegrals}
by non--symmetric time--frequency constraints
$$\int_{\R} |x|^{r} |g(x)|^2 dx = \infty \ \ \  \text{   or   } \ \ \ \int_{\R} |\xi|^{t} |\widehat{g}(\xi)|^2 d\xi = \infty.$$
This theorem provides critical--exponent versions of a large portion of the non--symmetric results in \cite{NO1}.
\end{remark}

\subsubsection{Generators of shift--invariant spaces}

Fix $K\in \N$ and $F=\{f_k\}_{k=_1}^K\subset L^2(\R^d)$. The \textit{shift--invariant space generated by $F$}, denoted  $V(F)$, is the closed linear span of the integer translates of the elements of $F$, that is
$$V(F)=\overline{\textrm{span} \thinspace \mathcal{T}(F)},\qquad\mathcal{T}(F)=\{f_k(x-n):n\in\Z^d, k=1,..,K\}.$$
We say that a shift--invariant space $V(F)$ has \textit{extra invariance} if there exists $\gamma \in \R^d \setminus \mathbb{Z}^d$ such that for every $h\in V(F)$ we have also  $h(x-\gamma)\in V(F)$.
We say that this extra invariance is \textit{non--trivial} if $J\gamma  \notin \mathbb{Z}^d$,
 where $J$ is the minimal cardinality of a set $H\subset L^2(\R)$ which satisfies $V(F)=V(H)$. Indeed, in this case the extra invariance is not a trivial consequence of the functions in $F$ being shifts of one another.
For more information on extra invariance in shift--invariant spaces, see \cite{ACHKM, ACP, ACP2}.

Analogues of the Balian--Low theorem for shift--invariant spaces were studied in \cite{ ASW, HNP, TW}.
Specifically, in \cite{HNP} D.~Hardin, A.~M.~Powell, and the author prove that if $V(F)$ admits non--trivial extra invariance, and $\mathcal{T}(F)$ is a Riesz basis for $V(F)$,
then for $t\geq 1$ there exists at least one $f_k\in F$ which satisfies
\begin{equation}\label{shift-inv-blt}
	\int_{\R^d} |x|^{t} |f_k(x)|^2 dx = \infty.
\end{equation}
 This theorem is sharp and the non--triviality of the extra invariance is necessary, \cite{HNP}.
 Our next results extend the Balian--Low theorem for shift--invariant spaces from the setting of Riesz bases to the settings of exact systems and exact $(C_q)$--systems.
Similar to Gabor systems, we note that the system of translates $T(F)$ is exact in $V(F)$ if and only if it is an exact $(C_{\infty})$--system there.
We first formulate our results over $\R$.

\begin{theorem}[Nitzan, Northington, Powell]\label{thm:CqSISHD-1d}
Let $2\le q \le \infty$ and let $F=\{f_k\}_{k=1}^K\subset L^2(\R)$. If $V(F)$ admits non--trivial extra invariance, and $\mathcal{T}(F)$ is an exact $(C_q)$--system for $V(F)$, then (\ref{shift-inv-blt}) holds for some $f_k \in F$ whenever $t\geq 2/q'$, where $q'=q/(q-1)$. The condition $t\geq 2/q'$ is sharp.
\end{theorem}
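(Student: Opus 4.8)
The plan is to run the fiberization dictionary that relates approximation properties of $\mathcal T(F)$ to matrix Fourier multipliers, and then to invoke Corollary~\ref{thm:SobolevResultSingleZero-2-2-matrix} together with Theorem~\ref{thm:KMultEig}. Write $\tau(\xi)\colon\C^K\to\ell^2(\Z)$ for the pre--Gramian whose $k$-th column is $(\widehat{f_k}(\xi+j))_{j\in\Z}$, and let $G(\xi)=\tau(\xi)^{*}\tau(\xi)=\bigl(\sum_{j\in\Z}\widehat{f_k}(\xi+j)\overline{\widehat{f_l}(\xi+j)}\bigr)_{k,l=1}^{K}$ be the Gramian of $\mathcal T(F)$, a nonnegative Hermitian $K\times K$ matrix--valued function on $\T$. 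Expanding $\bigl\|\sum_{k,n}a_{k,n}f_k(\cdot-n)\bigr\|_{L^2(\R)}^{2}$ by periodization gives $\|G^{1/2}\v A\|_{[L^2]^K}^{2}$ with $A_k(\xi)=\sum_n a_{k,n}e^{-2\pi i n\xi}$, while $\|(a_{k,n})\|_{\ell^q}=\|\mathcal F_K\v A\|_{[\ell^q]^K}$; after the substitution $\v B=G^{1/2}\v A$ and using $T_UT_V=T_{UV}$, one checks that, modulo the routine fact that minimality forces $\det G(\xi)>0$ for a.e.\ $\xi$, the property that $\mathcal T(F)$ is an exact $(C_q)$--system for $V(F)$ (cf.\ \eqref{Cq-cond}) is equivalent to the membership $G^{-1/2}\in\mathfrak M_2^q(K)$.

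The first substantial step is to use the non--trivial extra invariance to force a zero of $\det G$. By the structure theory for shift--invariant spaces with extra invariance (\cite{ACHKM, ACP, ACP2}), invariance of $V(F)$ under a lattice finer than $\Z^d$ imposes a shift--compatibility constraint on the Fourier fibres of $V(F)$ (equivalently, on the support of its range function); the non--triviality hypothesis $J\gamma\notin\Z$ is precisely what makes this constraint genuine rather than an artifact of the chosen generating set, and hence forces the associated Gramian to degenerate somewhere. Combined with the continuity of $G$ — which is free from the decay hypothesis via the embedding $W^{s,2}(\T)\hookrightarrow C(\T)$ for $s>1/2$ — this produces a point $\xi_0\in\T$ with $\det G(\xi_0)=0$. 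I expect this to be the main obstacle: producing an honest (single--point) zero rather than a merely essential $\Sigma$--zero, and handling general $K$; I would carry it out first, modeling the argument on the Riesz--basis case of \cite{HNP}.

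Now suppose, toward a contradiction, that $\int_{\R}|x|^{t}|f_k(x)|^{2}\,dx<\infty$ for every $k$ with $t=2/q'$; it suffices to treat this critical exponent, as larger $t$ only strengthens the hypothesis. Put $s=t/2=1/q'\in(\tfrac12,1]$ when $q>2$. The decay hypothesis is equivalent to $\widehat{f_k}\in W^{s,2}(\R)$ for all $k$, and a standard Gagliardo--seminorm estimate (valid since $s\le 1$) then shows $\xi\mapsto\tau(\xi)$ lies in $W^{s,2}\bigl(\T;\mathcal B(\C^{K},\ell^{2})\bigr)$; since singular values are $1$--Lipschitz, the eigenvalues of $G^{1/2}$, namely $\sqrt{\lambda_k(G(\cdot))}=\sigma_k(\tau(\cdot))$, all lie in $W^{s,2}(\T)$, and $\det\bigl(G^{1/2}\bigr)=\sqrt{\det G}$ vanishes at $\xi_0$. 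Applying Corollary~\ref{thm:SobolevResultSingleZero-2-2-matrix} (with Theorem~\ref{thm:KMultEig}) to the nonnegative Hermitian matrix $W:=G^{1/2}$, with $d=1$ and $s=1/q'$, and using that $d/(d-s)=q$ for this $s$, we conclude $G^{-1/2}=W^{-1}\notin\mathfrak M_2^q(K)$, which contradicts the equivalence of the first step. This settles $q>2$; the endpoint $q=2$ (where $t=2/q'=1$) recovers the Riesz--basis theorem of \cite{HNP} and is handled in the same spirit using the $s=d/2$ version of Theorem~\ref{thm:SobolevResultSingleZero-2-2} and the usual care with essential zeros.

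For sharpness, fix $t<2/q'$ and pick $s$ with $t/2\le s<1/q'$, so that $q>\tfrac{1}{1-s}$. By the converse half of Theorem~\ref{thm:SobolevResultSingleZero-2-2}(i) (with $d=1$) there is $w\in W^{s,2}(\T)$, which we may take to vanish at the origin, with $1/w\in\mathcal M_2^q$. Take $K=1$ and $\widehat f=w\cdot\mathbf{1}_{[0,1)}$, viewed as a function on $\R$. Since $w$ vanishes at the origin, $\widehat f$ has no jump at the endpoints of $[0,1)$, so $\widehat f\in W^{t/2,2}(\R)$ and hence $\int_{\R}|x|^{t}|f(x)|^{2}\,dx=c\,\|\widehat f\|_{\dot W^{t/2,2}(\R)}^{2}<\infty$; because $\widehat f$ is supported in a fundamental domain, $V(f)$ is translation--invariant and therefore admits non--trivial extra invariance ($J=1$, any $\gamma\notin\Z$); and, writing $\Phi=\sum_{j}|\widehat f(\cdot+j)|^2=|\widehat f|^2=w^2$, the single--band fiberization identity shows $\mathcal T(f)$ is an exact $(C_q)$--system for $V(f)$ if and only if $\Phi^{-1/2}=1/w\in\mathcal M_2^q$, which holds by construction (minimality following from $1/w\in L^2(\T)$). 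Hence $t\ge 2/q'$ cannot be improved.
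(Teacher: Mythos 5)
Your architecture coincides with the paper's: the equivalence ``$\mathcal T(F)$ is an exact $(C_q)$--system iff $P(\widehat F)^{-1/2}\in\mathfrak M_2^q(K)$'' is Proposition \ref{prop:CqSISFM}; the transfer of the decay hypothesis to $\sqrt{\zeta_k}\in W^{s,2}(\T)$ via the $1$--Lipschitz dependence of singular values is Proposition \ref{prop:sqrt-per-emb}; and the final contradiction via the matrix multiplier corollary (the paper uses Corollary \ref{thm:ScalarValuedLargeZeroResult-matrix} with $\sigma=d-1=0$, you use Corollary \ref{thm:SobolevResultSingleZero-2-2-matrix}; for $d=1$ with an honest point zero these give the same range $2\le q\le 1/(1-s)=q$) is identical. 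However, the step you explicitly defer --- that non--trivial extra invariance forces $\det G$ to vanish somewhere --- is not a side issue but the crux of the theorem: it is the only place the hypothesis $J\gamma\notin\Z$ is used, and ``modeling it on \cite{HNP}'' is not enough because the Riesz--basis argument there exploits boundedness below of the Gramian, which you do not have. The actual mechanism (Proposition \ref{prop:LargeZeroSet} in the paper, following \cite{ASW, TW}) is: decompose $P=\sum_{k\in R}P_{\Gamma^*}(\cdot+k)$, use the rank identity \eqref{eqn:RankFormula} and the minimal--generator count \eqref{eqn:MinGen} to see that if the smallest nonzero eigenvalue $\gamma_M$ of $P_{\Gamma^*}$ were positive a.e.\ then $J=M[\Gamma:\Z]$, contradicting non--triviality; hence $\{\gamma_M>0\}$ has a boundary point $x_0$, and the eigenvalue comparison of Lemma \ref{ev-estimate} ($\zeta_J\le\gamma_M$ on that set) together with continuity (available since $s>1/2$) forces $\zeta_J(x_0)=0$. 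Without this chain your proof does not close.

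Two smaller points. In the sharpness argument, ``$w$ vanishes at the endpoints, so $\widehat f=w\,\mathbf 1_{[0,1)}\in W^{t/2,2}(\R)$'' is not a valid inference: the Gagliardo cross--terms contribute roughly $\int_0^{1}|w(x)|^2\operatorname{dist}(x,\{0,1\})^{-2s}\,dx$, and mere continuity of $w$ at the cut (even the $o(|x|^{s-1/2})$ rate from Theorem \ref{thm:FracHolderEmb}) does not make this finite. You need the quantitative vanishing $w(x)\asymp|x|^{\beta}$ with $\beta>s-1/2$ that the explicit example $w_\beta$ of Proposition \ref{prop:MultCounter} provides --- which is exactly why the paper works with the concrete profile $h_\beta(x)=(\tfrac12-|x|)^{\beta/2}$ of Proposition \ref{prop:CqCounterD1} rather than an abstract multiplier counterexample. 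Also, place the zero of $w$ at the endpoint of the support interval (the paper's $h_\beta$ does this); the $w_\beta$ of Proposition \ref{prop:MultCounter} as written vanishes at an interior point of $[-1/2,1/2)$ and would have jumps at $\pm1/2$ after truncation. With these repairs your sharpness example is the paper's.
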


Next, we extend Theorem \ref{thm:CqSISHD-1d} to higher dimensions.

\begin{theorem}[Nitzan, Northington, Powell]\label{thm:CqSISHD}
Let $2\le q \le \infty$ and let $F=\{f_k\}_{k=1}^K\subset L^2(\R^d)$. If $V(F)$ admits non--trivial extra invariance, and $\mathcal{T}(F)$ is an exact $(C_q)$--system for $V(F)$, then (\ref{shift-inv-blt}) holds for some $f_k \in F$ and $t\geq \min\left(2d/q'-d+1,2\right)$, where $q'=q/(q-1)$. This result is sharp for $q=2$ and $q=\infty$.
\end{theorem}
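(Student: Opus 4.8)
The plan is to reduce the statement to the multiplier results of Section \ref{sec:restric-summary} via the machinery connecting $(C_q)$--systems to Fourier multipliers, as was done for the one--dimensional case (Theorem \ref{thm:CqSISHD-1d}) and for Gabor systems (Theorem \ref{thm:CqGabor}). The first step is to normalize the extra invariance: after a linear change of variables we may assume the non--trivial extra invariance direction $\gamma$ is a rational multiple of a coordinate vector, say $\gamma = (1/N) e_1$ for some integer $N \geq 2$, so that $V(F)$ is invariant under the finer lattice $\tfrac1N\Z \times \Z^{d-1}$ but, by non--triviality, not under translation by $e_1$ alone via the generators. I would then use the fiberization/range-function description of shift--invariant spaces (following \cite{ACHKM, ACP}): writing the Gramian $G(\xi)$ of the generating set $\mathcal{T}(F)$ as a $K\times K$ Hermitian matrix valued function on $\T^d$, the exact $(C_q)$--system condition \eqref{Cq-cond} translates — exactly as in \cite{NO1, NIT} and Section \ref{subsec:TFApps} — into the statement that a certain scalar (or matrix) function $w$ built from $G$ satisfies $1/w \in \mathcal{M}_2^q$, while completeness forces $w$ to vanish on a set determined by the obstruction to the "missing" invariance.

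The second step is to identify the zero set and the Sobolev regularity of $w$. The extra invariance by $\tfrac1N$ in the first variable means the relevant periodization operator has a kernel supported on a coset structure, and the failure of exactness would correspond precisely to $w(\xi) = 0$ on an affine subtorus of codimension $1$ — that is, a zero set $\Sigma(w)$ which is (a translate of) $\{\xi_1 = \text{const}\}$, hence $\mathcal{H}^\sigma(\Sigma(w)) > 0$ with $\sigma = d-1$. The time--frequency decay hypothesis $\int |x|^t |f_k(x)|^2\,dx < \infty$ for all $k$ translates, via Plancherel and the structure of the Gramian, into $w \in W^{s,2}(\T^d)$ with $s = t$ (for $t \le 1$; for larger $t$ one truncates at $s=1$, which explains the $\min(\cdot,2)$ in the statement — the regularity cannot be exploited beyond $s=1$ in Theorem \ref{thm:ScalarValuedLargeZeroResult}, but the $2d/q'-d+1 \le 2$ branch is exactly the regime $s \le 1$). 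Then I would invoke Theorem \ref{thm:ScalarValuedLargeZeroResult} (or Corollary \ref{thm:ScalarValuedLargeZeroResult-matrix} if the Gramian genuinely requires the matrix formulation) with $\sigma = d-1$: part (i) gives $1/w \notin \mathcal{M}_2^q$ whenever $2 \le q \le \frac{d}{d-s-\sigma/2} = \frac{d}{d - s - (d-1)/2} = \frac{2d}{d+1-2s}$. Setting $s = t$ and solving the boundary relation $q = \frac{2d}{d+1-2t}$ for $t$ yields $t = \frac{d+1}{2} - \frac{d}{q} = \frac{d+1-2d/q}{2}$; rewriting with $q' = q/(q-1)$ and simplifying gives the threshold $t \geq 2d/q' - d + 1$, which is exactly the asserted exponent. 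Combining with the $s=1$ ceiling (Theorem \ref{thm:ScalarValuedLargeZeroResult} requires $s \le 1$, and its $s\ge 1$ extension caps the conclusion) produces $t \geq \min(2d/q'-d+1, 2)$. This contradicts $1/w \in \mathcal{M}_2^q$, so some $f_k$ must fail the decay condition.

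For sharpness at $q = 2$ and $q = \infty$: when $q=\infty$ we have $q'=1$ and the threshold is $\min(d+1, 2) = 2$ (for $d\geq 1$), recovering the exact--system bound, whose sharpness follows from known constructions in the spirit of \cite{HNP, DJ}; when $q = 2$ we have $q' = 2$ and the threshold is $\min(1,2) = 1$, recovering the Riesz basis case of \cite{HNP}, where sharpness (existence of well-localized generators with $\mathcal{T}(F)$ a Riesz basis, non-trivial extra invariance, and decay just below $t=1$) is already established. I would construct the sharpness examples by taking tensor products of a one-dimensional sharp example from Theorem \ref{thm:CqSISHD-1d} (which handles the first variable carrying the extra invariance) with well-localized generators in the remaining $d-1$ variables; checking that the tensor construction preserves both the exact $(C_q)$ property and the extra invariance, and computing its decay, is routine. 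The main obstacle I anticipate is the bookkeeping in the second step — verifying rigorously that the exact $(C_q)$--system condition for $\mathcal{T}(F)$ is equivalent to a Fourier multiplier statement for the Gramian-derived function $w$ with the correctly identified zero set of dimension $d-1$, including handling the matrix-valued case via Theorem \ref{thm:KMultEig} and Corollary \ref{thm:ScalarValuedLargeZeroResult-matrix} when $K > 1$, and confirming that the determinant condition $\mathcal{H}^\sigma(\Sigma(\det W)) > 0$ is the right translation of the non-triviality of the extra invariance. A secondary subtlety is the non-sharpness gap noted after Theorem \ref{thm:ScalarValuedLargeZeroResult}: since that theorem is not known to be optimal for intermediate $\sigma$, the present theorem is correspondingly only claimed sharp at the endpoints $q \in \{2,\infty\}$, and I would not attempt to close the gap for $2 < q < \infty$.
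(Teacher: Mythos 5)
Your overall architecture matches the paper's: translate the exact $(C_q)$ hypothesis into $P(\widehat F)^{-1/2}\in\mathfrak{M}_2^q(K)$ (Proposition \ref{prop:CqSISFM}), translate the decay hypothesis into Sobolev regularity of the Gramian, show the relevant zero set has Hausdorff dimension at least $d-1$, and invoke Corollary \ref{thm:ScalarValuedLargeZeroResult-matrix} with $\sigma=d-1$; the endpoint exponent $t=2d/q'-d+1$ does come out of that computation, and your sharpness discussion (tensor products of the one--dimensional example for $q=\infty$, deferral to \cite{HNP} for $q=2$) is what the paper does. However, there is a genuine gap at the step you yourself flag as the main obstacle, and your proposed mechanism for it is wrong. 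You assert that non--trivial extra invariance forces the zero set of the relevant function to be an affine codimension--one subtorus $\{\xi_1=\text{const}\}$, arising from a ``failure of exactness.'' Neither claim is correct: the system is exact, and the zero set is in general not a subtorus. What actually happens (Proposition \ref{prop:LargeZeroSet}) is this: the rank characterization of $\Gamma$--invariance \eqref{eqn:RankFormula} together with the Tessera--Wang eigenvalue comparison (Lemma \ref{ev-estimate}) gives $0\le\zeta_J(x)\le\gamma_M(x)$ on the set $S_\Gamma=\{\gamma_M>0\}$, where $\gamma_M$ is the smallest not--identically--zero eigenvalue of the sub--periodized Gramian $P_{\Gamma^*}$; non--triviality of the invariance forces $|S_\Gamma|,|S_\Gamma^c|>0$; then a geometric lemma (Lemma \ref{linelemma}) produces a positive--measure set of lines parallel to some coordinate axis meeting both $S_\Gamma$ and its complement, and the line--restriction H\"older continuity of $W^{s,2}$ functions for $s>1/2$ (Proposition \ref{prop:HolderEmbedLines}) yields a zero of $\zeta_J$ on each such line. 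The conclusion $\mathcal{H}^{d-1}(\Sigma(\zeta_J))>0$ then comes from projecting onto the orthogonal hyperplane. None of this is a routine ``bookkeeping'' verification, and it cannot be replaced by the subtorus picture. A second nontrivial deferred step is Proposition \ref{prop:sqrt-per-emb}: that $\widehat F\in[W^{s,2}(\R^d)]^K$ implies $\sqrt{\zeta_k}\in W^{s,2}(\T^d)$ requires a min--max argument on the eigenvalues, not just Plancherel.

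Two smaller points. First, your regularity bookkeeping contains compensating errors: $\int|x|^t|f|^2\,dx<\infty$ gives $\widehat f\in W^{t/2,2}$, not $W^{t,2}$, so the correct substitution is $s=t/2$; with $s=t$ your own boundary relation yields $t=(2d/q'-d+1)/2$, half the claimed threshold, and only the combination of the two slips lands on the right exponent. Second, your opening normalization (reducing $\gamma$ to $(1/N)e_1$ by a linear change of variables) is unnecessary and glosses over irrational $\gamma$; the paper instead passes to the closed group generated by $\gamma$ and $\Z^d$, which always contains a rational point outside $\Z^d$, and works with a general lattice $\Gamma\supsetneq\Z^d$ and its dual throughout.
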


Theorem \ref{thm:CqSISHD-1d} and the cases $q=2,\infty$ of Theorem \ref{thm:CqSISHD} are both sharp.  In contrast, the cases $2<q<\infty$ of Theorem \ref{thm:CqSISHD} do not appear to be sharp, and it is reasonable to ask if the condition $t\geq \min\left(2d/q'-d+1,2\right)$ can be replaced by $t\geq 2/q'$ for all $2\leq q\leq\infty$, and all dimensions $d$,
see the discussion in Section \ref{open problems}.

\subsection{Outline of the paper}

The paper is organized as follows. In Section \ref{sec:Sobolevdef}, we provide necessary background and preliminary results regarding Sobolev spaces.  In Section \ref{sec:Proofs}, we study the
zero--sets of reciprocals of Fourier multipliers.  We begin by proving
Theorems \ref{thm:SobolevResultSingleZero-r-2} and \ref{thm:ScalarValuedLargeZeroGeneralResult}, and thereby obtaining Theorems \ref{thm:SobolevResultSingleZero-2-2} and \ref{thm:ScalarValuedLargeZeroResult} as special cases.
We next prove Theorem \ref{thm:nonsymMult} which addresses non--symmetric Sobolev spaces.

In Section \ref{sec:MatpqMult}, we extend the results of Section \ref{sec:Proofs} to $(p,q)$--multipliers and to matrix valued multipliers. In particular, we prove Theorem \ref{thm:KMultEig} and Corollaries \ref{thm:SobolevResultSingleZero-2-2-matrix} and \ref{thm:ScalarValuedLargeZeroResult-matrix}.
In Section \ref{sec:TFApp}, we discuss the relation to Gabor systems and use the results obtained in Section \ref{sec:Proofs} to prove Theorem \ref{thm:CqGabor}.   In Section \ref{subsec:SIS} we consider shift--invariant spaces and use the results from Sections \ref{sec:Proofs} and \ref{sec:MatpqMult} to prove Theorems \ref{thm:CqSISHD-1d} and \ref{thm:CqSISHD}.
We conclude by formulating some open problems and possible research directions in Section \ref{open problems}.

\section{Sobolev spaces}\label{sec:Sobolevdef}

Throughout this paper the smoothness of a function is quantified by the function belonging to certain Sobolev spaces. In this section we collect basic definitions and properties of these spaces.

\subsection{Notations} We denote the $d$--dimensional torus by $\T^d=\R^d/\Z^d$ and identify it with the interval $[-1/2,1/2)^d$.
The Fourier transform of $f \in L^1(\R^d)$ and the Fourier coefficients of $g \in L^1(\T^d)$ are normalized as follows,
\[\widehat{f}(\xi)=\int_{\R^d} f(x)e^{-2\pi i \langle x, \xi\rangle} dx ,\ \ \ \ \ \widehat{g}(k)=\int_{\T^d} g(x)e^{-2\pi i \langle k , x\rangle } dx, \]
so that the extension of the Fourier transform to $L^2(\R)$, as well as the restriction of the Fourier coefficients to $L^2(\T^d)$, are unitary operators.

We denote the cube of sidelength $2\tau$ centered at $x\in \R^d$  by $I_\tau(x)= x+[-\tau,\tau]^d$, and the ball of radius $\tau$ centered at the same point by $B_\tau(x)$. For a real number $s$, $\lfloor s\rfloor$ denotes the integer part of $s$, that is, the largest integer less than or equal to $s$, and $\{s\}=s-\lfloor s\rfloor$ denotes the fractional part of $s$. For vectors $x,y \in \C^d$, $\langle x,y\rangle$ denotes the standard inner product, and $|x|$ denotes the corresponding Euclidean norm.  For a vector of nonnegative integers $\alpha=(\alpha_1,...,\alpha_d)$, we denote $|\alpha|_1=\alpha_1+\cdots+ \alpha_d$ and  $D^{\alpha}=D_{1}^{\alpha_1} \cdots D_d^{\alpha_d}$, where $D_j$ is the distributional partial derivative operator with respect to the $j^{th}$ variable.

For a finite set $R$, $\# R$ denotes the cardinality of the set $R$.  For a set $E$ in either $\R^d$ or $\T^d$, $|E|$ denoted the Lebesgue measure of $E$.  The $\sigma$--dimensional Hausdorff measure of $E$ is denoted by $\mathcal{H}^\sigma(E)$ and defined as in \eqref{eqn:HausDim}, and for $w\in L^1(\T^d)$, the generalized zero set of $w$ is denoted by $\Sigma(w)$ and defined as in \eqref{eqn:ZeroSetDef}.

For $0<\alpha\le 1$ and an open subset $E \subset \R^d$, a function $f$ is  $\alpha$--H\"older continuous on $E$ if $f \in L^\infty(E)$ and there exists a positive constant $C$ so that $|f(x)-f(y)|\leq C|x-y|^\alpha$ for every $x,y \in E$.
Similarly, a periodic function $g$ is $\alpha$--H\"older continuous on $\T^d$ if its periodic extension is $\alpha$--H\"older continuous on $\R^d$.

We use the notation $A \lesssim B$ to imply that there exists a constant $c$ such that $A \le c B$.  Similarly, $A \asymp B$ means $A \lesssim B$ and $B \lesssim A$.

\subsection{The Bessel potential spaces and Sobolev--Slobodeckij spaces}
We recall the definitions of the following related classes of Sobolev spaces.  
\begin{definition} 
	Given $s>0$, the Bessel potential space $H^s(\T^d)$ consists of all $f \in L^2(\T^d)$ for which the following semi--norm is finite
\[
	\|f\|_{\dot{H}^s(\T^d)}^2= \sum_{k \in \Z^d} |k|^{2s}|\widehat{f}(k)|^2.
\]
When endowed with the norm $\|f\|_{H^s(\T^d)}^2= \|f\|_{L^2(\T^d)}^2+ \|f\|_{\dot{H}^s(\T^d)}^2$, $H^s(\T^d)$ is a Hilbert space.
\end{definition}

\begin{definition}\label{def:SobDef}
Let $1\leq r<\infty$ and $d\in\N$.  For $f\in L^r(\T^d)$ denote
\begin{itemize}
\item[i.] For $n\in\N$,
\[
\|f\|_{W^{n,r}(\T^d)}^r= \sum_{|\alpha|_1\le n} \|D^\alpha f \|_{L^r(\T^d)}^r.
\]
\item[ii.] For $0<s<1$,
\[
\|f\|_{\dot{W}^{s,r}(\T^d)}^r = \int_{\T^d} \int_{\T^d} \frac{|f(x+y)-f(x)|^r}{|y|^{d+sr}} dy dx.
\]
\end{itemize}
For $s>0$, the Sobolev--Slobodeckij space ${W}^{s,r}(\T^d)$ is the family of all functions  $f\in L^r(\T^d)$ for which the norm
\[
\|f\|_{W^{s,r}(\T^d)}^r=  \|f\|_{W^{\lfloor{s}\rfloor,r}(\T^d)}^r+ \sum_{|\alpha|_1=\lfloor{s}\rfloor} \|D^\alpha f\|_{\dot{W}^{\{s\},r}(\T^d)}^r,
\]
is finite.
\end{definition}
With this norm $W^{s,r}(\T^d)$ is a Banach space and a Hilbert space when $r=2$. In the latter case the spaces $ H^{s}(\T^d)$ and $W^{s,2}(\T^d)$ are equal and have equivalent norms (see e.g. \cite{BO}).

\begin{remark}
For an open subset $\Omega\subset \R^d$, $W^{s,r}(\Omega)$ is defined similarly, but with the semi--norm given by
\begin{align*}
	\|f\|_{\dot{W}^{s,r}(\Omega)}^r = \int_{\Omega} \int_{\Omega} \frac{|f(x)-f(y)|^r}{|x-y|^{d+sr}} dy dx,
\end{align*}
for $0<s<1$. Note that the main difference between this definition and the definition over the torus, is that Sobolev spaces on the torus
enforce smoothness across the boundary of $[-1/2,1/2]^d$ in its identification with $\T^d$, whereas this is not so on Euclidean domains.

\end{remark}

\subsection{Sobolev embeddings}\label{alphasection}

Given $d\in\N$, $s>0$ and $1\leq r<\infty$, the number $\alpha=s-d/r$ is an important quantitative measure of smoothness for the space $W^{s,r}(\T^d)$.
The next theorem shows that $\alpha$ determines Sobolev embeddings for $W^{s,r}(\T^d)$.
\begin{theorem}[Theorem 7.58 in \cite{Ad}] \label{thm:SobEmb}
Suppose that $X=\T^d$ or that $X=B$ is a ball in $\R^d$. Given
$0<s'\le s<\infty$ and $1< r\le  r'<\infty$, let $\alpha=s-d/r$ and $\alpha^\prime=s'-d/r'$.  If  $\alpha \geq \alpha^{\prime}$ then
\[ W^{s,r}(X) \subset W^{s',r'}(X),\]
and the embedding is continuous.
\end{theorem}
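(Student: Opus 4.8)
This is the classical Sobolev embedding theorem (quoted here from \cite{Ad}), so the plan is to recover it from a few model inequalities on $\R^d$ together with a localization argument. \textbf{First,} I would record the integer-order embeddings on $\R^d$: the Gagliardo--Nirenberg--Sobolev inequality $\|f\|_{L^{r^{*}}(\R^d)}\lesssim\|\nabla f\|_{L^{r}(\R^d)}$ with $\tfrac1{r^{*}}=\tfrac1r-\tfrac1d$ for $1\le r<d$ (proved for Schwartz functions by the slicing / Loomis--Whitney argument and then extended by density), and Morrey's inequality $\|f\|_{C^{0,1-d/r}(\R^d)}\lesssim\|f\|_{W^{1,r}(\R^d)}$ for $r>d$. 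Iterating the first, whenever the exponents stay admissible, gives $W^{k,r}(\R^d)\hookrightarrow W^{k-j,\,r^{(j)}}(\R^d)$ with $\tfrac1{r^{(j)}}=\tfrac1r-\tfrac jd$, an operation that preserves the index $\alpha=s-d/r$; feeding the output into Morrey covers the range in which $\alpha$ is positive, so that one lands in a H\"older space.

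\textbf{Second,} I would pass to non-integer orders. The cleanest device is real interpolation: for non-integer $s$ the Sobolev--Slobodeckij space $W^{s,r}$ coincides, with equivalent norm, with the real interpolation space between the two neighbouring integer-order Sobolev spaces, both on $\R^d$ and on $X$; the integer-order embeddings from the first step then interpolate to all $s>0$. Equivalently, one estimates the Gagliardo seminorm $\dot W^{\{s\},r}$ directly by adapting the slicing proof --- either way, this is where the fractional bookkeeping lives.

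\textbf{Third,} I would transfer the $\R^d$-statements to $X$. For $X=B$ a ball this is the standard extension argument: there is a bounded linear extension operator $E\colon W^{s,r}(B)\to W^{s,r}(\R^d)$ (classical for a ball), so one applies the $\R^d$-embeddings to $Ef$ and then restricts back to $B$. For $X=\T^d$ one covers the torus by finitely many balls and glues the local estimates with a smooth partition of unity, reducing again to the ball case; here the point noted in the Remark after Definition~\ref{def:SobDef} is what matters, namely that the periodic norm already encodes smoothness across $\partial[-\tfrac12,\tfrac12]^d$, so no genuine boundary obstruction arises. \textbf{Finally,} I would assemble the statement for $\alpha\ge\alpha'$. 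If $\alpha<s'$, choose $r''$ with $s'-d/r''=\alpha$ (one checks $1<r''<\infty$, $r\le r''$, and $r''\ge r'$ using $s'-d<\alpha<s'$ and $\alpha\ge\alpha'$); then $W^{s,r}(X)\hookrightarrow W^{s',r''}(X)$ is the $\alpha$-preserving (``critical'') embedding supplied by the first two steps, legitimate since $s-s'=d/r-d/r''\ge0$, and $W^{s',r''}(X)\hookrightarrow W^{s',r'}(X)$ follows order by order from $r''\ge r'$ and $|X|<\infty$. If instead $\alpha\ge s'$, then $W^{s,r}(X)$ already embeds into a H\"older space of order at least $s'$, which on the bounded or periodic set $X$ embeds into $W^{s',r'}(X)$ for every $r'<\infty$.

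I expect the only real difficulty to be the book-keeping at the borderline exponents: when an iterated exponent $r^{(j)}$ reaches $d$, where the unit-step Gagliardo--Nirenberg--Sobolev inequality degenerates and must be replaced, on a bounded domain, by a step into every $L^p$ with $p<\infty$; and when $\alpha$ is a nonnegative integer, where Morrey's inequality yields $C^{\alpha-1,\beta}$ for all $\beta<1$ rather than a sharp endpoint. The hypotheses $1<r\le r'<\infty$ of the theorem are exactly what keeps one away from the genuinely delicate endpoints, and on a bounded or periodic $X$ such losses do not affect the asserted inclusion; but they must be tracked so as not to claim a false endpoint embedding. For the uses made of Theorem~\ref{thm:SobEmb} in this paper one needs only $r=r'=2$ and $0<s\le d$, where the whole statement follows from an elementary Cauchy--Schwarz estimate on Fourier coefficients.
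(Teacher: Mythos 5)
This statement is not proved in the paper: it is quoted from Adams \cite{Ad} for $X=\R^d$, and the only argument the authors supply is the transfer to $X=\T^d$ via the cutoff identity \eqref{torus-from-euclidean}; your localization by a partition of unity plus an extension operator accomplishes the same reduction. So your proposal is reconstructing a classical proof that the paper takes for granted, and at that level your first three steps are the standard ones.

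There is, however, a genuine gap in your final assembly. You factor the embedding as $W^{s,r}(X)\hookrightarrow W^{s',r''}(X)\hookrightarrow W^{s',r'}(X)$ with $s'-d/r''=\alpha$ and $r''\ge r'$, and assert that the second inclusion ``follows order by order from $r''\ge r'$ and $|X|<\infty$.'' For integer $s'$ this is H\"older applied to each derivative, but for fractional $s'$ it is false: lowering the integrability exponent at fixed non-integer smoothness is precisely the direction excluded by the hypothesis $r\le r'$ of the theorem. Applying H\"older to the Gagliardo seminorm produces the divergent factor $\bigl(\int_X\int_X|x-y|^{-d}\,dx\,dy\bigr)^{1-r'/r''}$, and this is not a defect of the method: a lacunary construction with $\sim 2^{jd}$ equal wavelet coefficients at each scale $j$ reduces the $W^{s',r''}$ and $W^{s',r'}$ norms to the $\ell^{r''}$ and $\ell^{r'}$ norms of a sequence indexed by $j$ alone, so $W^{s',r''}(X)\not\subset W^{s',r'}(X)$ when $r''>r'$ and $s'\notin\N$. (In Besov language, $B^{s'}_{r'',r''}\hookrightarrow B^{s'}_{r',r''}$ does hold on bounded domains, but the further passage to $B^{s'}_{r',r'}$ moves the third index the wrong way.) The fix is to reverse the order of your two reductions: first travel along the critical line to the target integrability, $W^{s,r}(X)\hookrightarrow W^{s'',r'}(X)$ with $s''-d/r'=\alpha$ (note $s'\le s''\le s$ precisely because $\alpha\ge\alpha'$ and $r\le r'$), and then lower the smoothness at fixed integrability, $W^{s'',r'}(X)\hookrightarrow W^{s',r'}(X)$; this monotonicity in $s$ at fixed exponent is the one that is valid on bounded and periodic domains (e.g.\ Propositions 2.1--2.2 in \cite{DPV}). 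The same endpoint issue affects your case $\alpha\ge s'$: when $\alpha=s'$ the H\"older class $C^{0,s'}(X)$ does not embed into $W^{s',r'}(X)$ (same divergent integral), so there too you must route through the critical Sobolev embedding rather than through H\"older continuity. Finally, your closing remark that the paper only needs $r=r'=2$ is inaccurate: the critical case $s-d/r=s'-d/r'$ with $r\ne r'$ is invoked repeatedly, e.g.\ in the proof of Theorem \ref{thm:FracHolderEmb}, in Remark \ref{rem:PoincareIneq-s}, and in Corollary \ref{thm:SobolevResultSingleZero-matrix-r}.
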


Theorem \ref{thm:SobEmb} is stated for $X=\R^d$ in \cite{Ad}, but it also extends to
more general domains such as balls $X=B$.
The case $X=\T^d$ in Theorem \ref{thm:SobEmb} follows from the case $X=\R^d$ since for $\Z^d$--periodic $f$, if $\psi \in C^{\infty}(\R^d)$ satisfies $\psi(x)=1$ for $x \in [-1,1)^d$, and $\text{supp}(\psi) \subset [-2,2)^d$, then
\begin{equation} \label{torus-from-euclidean}
\|f\|_{W^{s,r}(\T^d)}  \asymp \| \psi f \|_{W^{s,r}(\R^d)},
\end{equation}
where the implicit constants in \eqref{torus-from-euclidean} do not depend on $f$.
Due to the complexity of the seminorms in the definition of $W^{s,r}(\T^d)$, it is often easier to prove results for Sobolev spaces  under the additional assumption that $0<s\le 1$. Theorem \ref{thm:SobEmb} then allows to extend these results to cases where $s>1$.

 The next result embeds $W^{s,r}(\T^d)$ into the space of $\alpha$--H\"older continuous functions for $0<\alpha<1$, where as above $\alpha=s-d/r$. In fact, it implies that functions in  $W^{s,r}(\T^d)$ are slightly smoother then functions in the corresponding H\"older space, as it shows that for $f\in W^{s,r}(\T^d)$ we have the estimate
$|f(x)-f(y)| \lesssim  |x-y|^{\alpha} h(|x-y|),$ where $h(u) = o(1)$ as $u$ tends to $0$. The proof of this theorem follows closely the proof of Theorem 8.2 in \cite{DPV}.
\begin{theorem} \label{thm:FracHolderEmb}
For $1< r < \infty$ and $s>0$, let $\alpha=s-\frac{d}{r}$. If $0<\alpha<1$, then there exists a constant $C=C(d,s,r)$ such that for all $f \in W^{s,r}(\T^d)$ and for any $x, y \in \T^d$ we have,
\begin{equation} \label{holder-embed-thm-eq}
|f(x)-f(y)| \le C \| f \|_{\dot{W}^{s,r}(B_{2\tau}(z))} |x-y|^\alpha,
\end{equation}
where $z$ is the midpoint between $x$ and $y$, and $|x-y|=\tau$.
\end{theorem}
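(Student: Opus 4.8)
The plan is to adapt the Morrey-type argument from the proof of Theorem~8.2 in \cite{DPV}, localizing every integral to the ball $B_{2\tau}(z)$ and keeping all constants scale- and translation-invariant so that they depend only on $d,s,r$. I identify $\T^d$ with $[-1/2,1/2)^d$ and work with the periodic extension of (a representative of) $f$ to $\R^d$, so that the balls $B_\rho(w)\subset\R^d$ below, and the seminorms $\|f\|_{\dot W^{s,r}(B_\rho(w))}$ as defined in the Remark following Definition~\ref{def:SobDef}, all make sense; I restrict attention to $0<s<1$, which is the range in which that seminorm was defined. Since every estimate is carried out for the periodic extension with the Euclidean seminorm, the equivalence \eqref{torus-from-euclidean} is not needed, and the case of $z$ near $\partial[-1/2,1/2)^d$, where $B_{2\tau}(z)$ wraps around the torus, causes no difficulty. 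For a ball $B$ I write $f_B$ for the average of $f$ over $B$.

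The main ingredient is a single-scale averaging estimate: for every ball $B_\rho(w)$,
\[
\bigl| f_{B_{\rho/2}(w)} - f_{B_\rho(w)} \bigr| \le C(d,s,r)\,\rho^{\alpha}\,\| f \|_{\dot W^{s,r}(B_\rho(w))}.
\]
To prove it, I would write the left-hand side as $|B_{\rho/2}(w)|^{-1}|B_\rho(w)|^{-1}$ times $\bigl|\iint_{B_{\rho/2}(w)\times B_\rho(w)}(f(u)-f(v))\,du\,dv\bigr|$, bound this by the double integral of $|f(u)-f(v)|$, insert the factor $|u-v|^{(d+sr)/r}$, apply H\"older's inequality with exponents $r$ and $r'$, and use $|u-v|\lesssim\rho$ together with $|B_{\rho/2}(w)|,|B_\rho(w)|\asymp\rho^{d}$; collecting the powers of $\rho$ produces the exponent $-2d+2d/r'+(d+sr)/r = s-d/r = \alpha$. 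Iterating over the dyadic balls $B_{2^{-k}\rho}(w)$, and using that $\|f\|_{\dot W^{s,r}(B')}\le\|f\|_{\dot W^{s,r}(B)}$ whenever $B'\subset B$ (nonnegative integrand), the series $\sum_{k\ge 0}2^{-k\alpha}$ converges precisely because $\alpha>0$; this simultaneously shows that $\widetilde f(w):=\lim_{\rho\to 0}f_{B_\rho(w)}$ exists at \emph{every} $w$, agrees with $f$ almost everywhere, is continuous, and satisfies $|\widetilde f(w)-f_{B_\rho(w)}|\le C(d,s,r)\,\rho^{\alpha}\,\|f\|_{\dot W^{s,r}(B_\rho(w))}$.

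The second step combines this estimate at the two points. With $\tau=|x-y|$ and $z$ the midpoint, I would use
\[
|f(x)-f(y)| \le \bigl|f(x)-f_{B_\tau(x)}\bigr| + \bigl|f_{B_\tau(x)}-f_{B_\tau(y)}\bigr| + \bigl|f_{B_\tau(y)}-f(y)\bigr|.
\]
The first and third terms are $\le C\tau^{\alpha}\|f\|_{\dot W^{s,r}(B_\tau(x))}$ and $\le C\tau^{\alpha}\|f\|_{\dot W^{s,r}(B_\tau(y))}$ by the first step; the middle term is handled exactly as in the single-scale estimate, writing it as $|B_\tau(x)|^{-1}|B_\tau(y)|^{-1}\iint_{B_\tau(x)\times B_\tau(y)}(f(u)-f(v))\,du\,dv$ and using $|u-v|\le 3\tau$ there. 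Because $|x-z|=|y-z|=\tau/2$, we have $B_\tau(x),B_\tau(y)\subset B_{3\tau/2}(z)\subset B_{2\tau}(z)$, so monotonicity of the seminorm under inclusion of domains bounds all three seminorms by $\|f\|_{\dot W^{s,r}(B_{2\tau}(z))}$, which yields \eqref{holder-embed-thm-eq} with $C=C(d,s,r)$.

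The argument contains nothing deep, so the real work is bookkeeping, concentrated in two spots. First, the passage from the almost-everywhere class $f\in W^{s,r}(\T^d)$ to a genuine pointwise inequality: one must verify that the dyadic-average limit exists at every point, coincides with the continuous representative, and that it is this representative whose values appear in \eqref{holder-embed-thm-eq} --- the summability of $\sum_k 2^{-k\alpha}\rho^{\alpha}\|f\|_{\dot W^{s,r}(B_{2^{-k}\rho}(w))}$ is what makes this work, but it has to be spelled out. Second, making sure $C$ depends only on $d,s,r$: each ball estimate must be recorded in scale- and translation-invariant form, and the volume ratios, the bound $|u-v|\le 3\tau$, and the geometric-series constant must all be tracked to confirm that no dependence on $\tau$, $z$, or $f$ survives.
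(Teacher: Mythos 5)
For the range $0<s<1$ your argument is correct and is essentially the paper's own proof: the same three--term splitting of $|f(x)-f(y)|$ around the averages over $B_\tau(x)$ and $B_\tau(y)$, the same H\"older treatment (multiply and divide by $|u-v|^{(d+sr)/r}$) of the middle term, and for the two outer terms you simply unwind the dyadic telescoping that the paper outsources to Lemma 2.2 of Giusti (Lemma 8.1 of \cite{DPV}). Your exponent bookkeeping ($-2d+2d/r'+(d+sr)/r=\alpha$), the inclusions $B_\tau(x),B_\tau(y)\subset B_{3\tau/2}(z)\subset B_{2\tau}(z)$, the use of $\alpha>0$ to sum the dyadic series and produce the everywhere--defined continuous representative, and the scale/translation invariance yielding $C=C(d,s,r)$ are all in order.

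There is, however, one genuine gap: the theorem is stated for all $s>0$ with $0<\alpha<1$, and you have explicitly excluded $s\ge 1$ (for instance $d=2$, $r=2$, $s=3/2$ is a legitimate instance with $\alpha=1/2$). Your stated reason --- that the seminorm is only defined for $0<s<1$ --- does not discharge the case; it only means the conclusion for $s\ge1$ must be read through a reduction. The paper handles this in two lines: since $0<s-d/r<1$, choose $0<s'<1$ and $1<\tilde r<\infty$ with $s-d/r=s'-d/\tilde r$; Theorem \ref{thm:SobEmb} gives the continuous embedding $W^{s,r}\subset W^{s',\tilde r}$ (on balls as well as on $\T^d$), and the case already proved applies with $(s',\tilde r)$ in place of $(s,r)$, the right--hand side of \eqref{holder-embed-thm-eq} then being interpreted via the $\dot W^{s',\tilde r}$ seminorm. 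You should append this reduction to make the proof cover the full range of the statement.
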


\begin{proof}

We first consider the case $0<s<1$. As mentioned above, in this case the result follows the proof of Theorem 8.2 in \cite{DPV}  with minor changes.  We give a sketch of the proof and leave the details to the reader.
For a measurable set $U$, let $\langle f \rangle_U:=\frac{1}{|U|}\int_Uf$ be the average of $f$ over $U$.  For any two points $x, y \in \T^d$ we have
\begin{align}\label{eqn:HolderBound541}
|f(x)-f(y)| \le |f(x)-\langle f\rangle_{B_\tau(x)}| + |\langle f\rangle_{B_\tau(x)}- \langle f\rangle_{B_\tau(y)}|+ |\langle f\rangle_{B_\tau(y)}-f(y)|.
\end{align}
The first and third terms in the right hand side can be bounded similarly.  Lemma 2.2 in \cite{Giusti} (see also Lemma
8.1 in \cite{DPV}) implies that if both $x$ and $y$ are Lebesgue points of $f$, then each of these terms can be bounded by $C\tau^{\alpha}\Phi(\tau)$ where,
\begin{align}\label{eqn:Giusti}
 \Phi(\tau):=\left( \sup_{0<\rho<\tau} \rho^{-sr}\int_{B_\rho(x)} |f(u)-\langle f
\rangle_{B_\rho(x)}|^rdu\right)^{1/r}.
\end{align}
Moreover, it follows from the proof of this estimate that the constant $C$ depends only on $s$, $d$, and $r$. Further, equation (8.3) of \cite{DPV} implies that $\Phi(\tau)\lesssim \sup_{0<\rho<\tau}\|f\|_{\dot{W}^{s,r}(B_{\rho}(x))} \le \|f\|_{\dot{W}^{s,r}(B_{2\tau}(z))}$
which provides the required bound.

Similarly, using H\"older's inequality and multiplying and dividing by $|u-v|^{d+sr}$, the middle term of \eqref{eqn:HolderBound541} may be bounded by a constant multiplying,
\begin{align}
\tau^{r\alpha} \int_{B_\tau(x)}\int_{B_\tau(y)} \frac{|f(u)-f(v)|^r}{|u-v|^{d+sr}} du dv \lesssim |x-y|^{r\alpha} \|f\|_{\dot{W}^{s,r}(B_{2\tau}(z))}^r,
\end{align}
where again the implied constants only depend on $s$, $d$, and $r$.  Therefore, the result holds when $0<s<1$.

The case $s\geq 1$ follows from a reduction to the case $0<s<1$.  To see this, note that since $0<s-\frac{d}{r}<1$, there exist $0<s'<1$ and $1< r' < \infty$ such that $s-\frac{d}{r}=s'-\frac{d}{r'}.$  Since Theorem \ref{thm:SobEmb} gives $W^{s,r}(\T^d) \subset W^{s',r'}(\T^d)$, the result now follows.
\end{proof}

Theorem \ref{thm:FracHolderEmb} implies that, whenever $0<\alpha<1$, if $f \in W^{s,r}(\T^d)$ satisfies $f(y)=0$ for some $y\in\T^d$, then for $\tau>0$,
\begin{align} \|f\|_{L^r(B_\tau(y))} \le C \tau^{s} \| f \|_{\dot{W}^{s,r}( B_{2\tau}(y))}. \label{eqn:holderlocalization}\end{align}
The work in \cite{JL} and \cite{SC} shows that similar bounds as in \eqref{eqn:holderlocalization} hold also in the case where $\alpha <0$, if the Hausdorff dimension of the zero set of the function is large enough.

\begin{theorem}[\cite{JL} and \cite{SC}]\label{thm:PI}
Fix $1<r<\infty$ and let $d$, $\sigma$, and $s$ satisfy, $0\le (d-\sigma)/r<s\le 1$.

If $f \in W^{s,r}(\T^d)$ satisfies $H^{\sigma}(\Sigma(f))>0$ then there exist a constant $C>0$, and a  closed set $T\subset \Sigma(f)$ with $H^{\sigma}(T)>0$ such that the following holds. For every $\epsilon>0$ one can find a collection of pairwise disjoint balls $\mathcal{B}=\{B_k\}_{k=1}^\infty= \{B_{\tau_k}(x_k)\}_{k=1}^\infty$ with $x_k \in T$ such that
\begin{itemize}
\item[(i)]  $T\subset \bigcup_{k=1}^\infty B_{5\tau_k}(x_k)$.
\item[(ii)] $|\bigcup_{k=1}^\infty B_k|<\epsilon$.
\item[(iii)] $ \|f\|_{L^r(B_{k})} \le C {\tau_k}^s \|f\|_{\dot{W}^{s,r}(B_{k})}\qquad\forall k\in\N.$
\end{itemize}

\end{theorem}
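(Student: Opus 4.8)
The plan is to establish the three conclusions of Theorem~\ref{thm:PI} by a covering argument built on the scale-$\tau$ Poincaré-type inequality proved in \cite{JL, SC}. The starting point is the result of Jiang--Lin and Schikorra: under the hypothesis $0 \le (d-\sigma)/r < s \le 1$, if $f \in W^{s,r}(\T^d)$ and $\mathcal{H}^\sigma(\Sigma(f)) > 0$, then there is a closed subset $T \subset \Sigma(f)$ with $\mathcal{H}^\sigma(T) > 0$ and a constant $C_0 > 0$ such that for \emph{every} ball $B_\tau(x)$ with $x \in T$ one has the localized estimate $\|f\|_{L^r(B_\tau(x))} \le C_0 \tau^s \|f\|_{\dot{W}^{s,r}(B_\tau(x))}$. (This is the substitute for \eqref{eqn:holderlocalization} in the regime $\alpha = s - d/r < 0$; the extraction of the ``good'' subset $T$ is exactly where the positivity of the Hausdorff measure is consumed, as one needs enough of the zero set concentrated near each of its points to run the fractional Poincaré argument.) Conclusion (iii) is then automatic for \emph{any} ball centered in $T$, so the real content is to organize such balls into a pairwise disjoint family that still covers $T$ up to the $5$-dilation and whose union has arbitrarily small measure.

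First I would fix $\epsilon > 0$. Since $T \subset \Sigma(f)$ has measure zero (each of its points satisfies the averaging condition \eqref{eqn:ZeroSetDef}, which forces $T$ to be Lebesgue-null as $f$ is nonzero a.e.; alternatively $\mathcal{H}^\sigma(T) > 0$ with $\sigma < d$ already forces $|T| = 0$), one can choose an open set $\Omega \supset T$ with $|\Omega| < \epsilon$. Next I would apply the Vitali covering lemma: consider the collection of all balls $B_\tau(x)$ with $x \in T$, $\overline{B_\tau(x)} \subset \Omega$, and $\tau < \delta$ for some small $\delta$; this is a Vitali cover of $T$. The $5r$-covering form of the Vitali lemma produces a countable pairwise disjoint subcollection $\{B_{\tau_k}(x_k)\}_{k=1}^\infty$ with $x_k \in T$ such that $T \subset \bigcup_k B_{5\tau_k}(x_k)$, which is conclusion (i). Since all chosen balls lie inside $\Omega$ and are pairwise disjoint, $\bigl|\bigcup_k B_{\tau_k}(x_k)\bigr| \le |\Omega| < \epsilon$, which is conclusion (ii). Finally, each $x_k$ lies in $T$, so conclusion (iii) follows directly from the Jiang--Lin--Schikorra localized Poincaré inequality quoted above, with $C = C_0$.

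The subtlety I expect to be the main obstacle is the passage from the ``generic'' statement available in \cite{JL, SC} to the uniform-constant statement needed here: one must verify that the Poincaré constant $C_0$ can be taken independent of the center $x \in T$ and the radius $\tau$, which is why the argument cannot be run on all of $\Sigma(w)$ but only on a carefully chosen closed subset $T$ of positive $\mathcal{H}^\sigma$-measure where the density of the zero set is controlled from below. If \cite{JL, SC} provide the estimate in the scale-invariant form stated (after restricting to such a $T$), then the remaining work is exactly the Vitali covering bookkeeping sketched above and is routine. I would also remark that the hypothesis $s \le 1$ is used only so that the fractional Sobolev seminorm $\|\cdot\|_{\dot{W}^{s,r}}$ localizes additively over disjoint balls; the extension to $s \ge 1$ alluded to in Remark~\ref{rem:PoincareIneq-s} would be handled separately via the Sobolev embedding Theorem~\ref{thm:SobEmb} as in the proof of Theorem~\ref{thm:FracHolderEmb}.
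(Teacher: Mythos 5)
Your proposal follows essentially the same route as the paper: the analytic content --- the ball-level Poincar\'e inequality valid on a well-chosen closed subset $T\subset\Sigma(f)$ of positive $\mathcal{H}^\sigma$-measure --- is delegated to Theorem 2.1 of \cite{JL} and Theorem 1.3 of \cite{SC} (with the H\"older bound \eqref{eqn:holderlocalization} covering the regime $s>d/r$, which you should mention explicitly), and conclusions (i)--(ii) are obtained by the covering construction from the proof of Theorem 1.1 in \cite{JL}, which is exactly the Vitali bookkeeping you describe.

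One parenthetical justification is wrong and should be repaired. The claim that $\mathcal{H}^\sigma(T)>0$ with $\sigma<d$ ``already forces $|T|=0$'' is false: any set of positive Lebesgue measure has \emph{infinite} $\mathcal{H}^\sigma$-measure for every $\sigma<d$. Your primary justification is also unavailable, since nothing in the hypotheses prevents $f$ from vanishing on a set of positive measure, in which case $\Sigma(f)$ itself has positive Lebesgue measure. The point matters because (i) and (ii) together genuinely force $|T|=0$ (the $5$-dilates have total measure at most $5^d\epsilon$), so $T$ must be chosen Lebesgue-null. The standard fix, implicit in the \cite{JL} construction, is to extract a compact $T\subset\Sigma(f)$ with $0<\mathcal{H}^\sigma(T)<\infty$, which is possible for any Borel set of positive $\mathcal{H}^\sigma$-measure; finiteness of $\mathcal{H}^\sigma(T)$ with $\sigma<d$ then does give $|T|=0$. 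With that adjustment --- and restricting the Vitali cover to those balls for which \cite{JL} and \cite{SC} actually supply the zero-set density condition (possibly only a sequence of radii at each point of $T$, which is still a fine cover) --- your argument is complete and coincides with the paper's.
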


\begin{remark}\label{rem:PoincareIneq-s}
	Theorem \ref{thm:PI} also holds for $f \in W^{s,r}(\T^d)$ when $s>1$ with the condition
	\begin{equation*} \frac{d}{r}-\frac{\sigma}{d-\sigma} < s < \frac{d}{r}+1\end{equation*}
	replacing the condition $(d-\sigma)/r<s\le 1$.  The upper bound shows that there exists an $r'>r$ such that $s-d/r=1-d/r'$, and so Theorem \ref{thm:SobEmb} implies that $f \in W^{1,r'}(\T^d)$.  Rearranging the lower bound and applying the previous equality gives
		${d}/{r'} < 1+\sigma/(d-\sigma)= d/({d-\sigma})$,
	which is equivalent to  $(d-\sigma)/r' <1$.  Thus, Theorem \ref{thm:PI} can be applied to the space $W^{1,r'}(\T^d)$.  Clearly, (i) and (ii) continue to hold. To see that (iii) still holds, H\"older's inequality gives,
	\[\|f\|_{L^r(B_{k})} \lesssim \tau^{d/r-d/r'} \|f\|_{L^{r'}(B_{k})} \lesssim \tau^{1+d/r-d/r'} \|f\|_{\dot{W}^{1,r'}(B_{k})}\lesssim \tau^s \|f\|_{\dot{W}^{s,r}(B_{k})}. \]
\end{remark}

\begin{proof}[Proof of Theorem \ref{thm:PI}]
The proof is a combination of results from \cite{JL} and \cite{SC}.
Theorem 2.1 of \cite{JL} and Theorem 1.3 of \cite{SC} provide sufficient conditions on $T$ and $B_{k}$ for condition (iii) to hold in the case that $s\le d/r$, and equation \eqref{eqn:holderlocalization} provides the same for $s>d/r$. The proof of Theorem 1.1 in \cite{JL} shows how to construct $T$ and choose $\mathcal{B}$ so that these sufficient conditions are met for each ball in $\mathcal{B}$ and such that the conditions (i) and (ii) are also satisfied.

\end{proof}

\subsection{Restrictions of Sobolev functions over lines} \label{subsec:SobLines}

For $x=(x_1,...,x_{d-1}) \in \R^{d-1}$ and $1\leq i \leq d$ denote by $L_i(x)$ the line parallel to the $i$th coordinate axis passing through the point $(x_1,...,x_{i-1},0,x_{i},...,x_{d-1})$, that is,  $L_i(x)=\{(x_1,...,x_{i-1},t,x_{i},...,x_{d-1})\}$.
An equivalent, definition to the  Sobolev--Slobodeckij  spaces is given in the following proposition.
\begin{proposition}[e.g. \cite{BBM, Ad}]\label{sobolevonlines}
Fix $d\in \N$, $0<s\le 1$ and $1\leq r<\infty$. Then, for $f\in W^{s,r}(\T^d)$ we have,
\begin{align}\label{eqn:AltSobNormLines}
\|f\|^r_{\dot{W}^{s,r}(\T^d)} \asymp
	 \sum_{k=1}^d \int_{\T^{d-1}} \|f|_{L_i(x)}\|^r_{\dot{W}^{s,r}(\T)} dx,
\end{align}
where the implied constants do not depend on $f$.
\end{proposition}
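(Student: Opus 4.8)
The plan is to establish the two-sided bound by comparing the full Gagliardo seminorm over $\T^d$ with the iterated one-dimensional seminorms coordinate by coordinate. The quantity on the right-hand side of \eqref{eqn:AltSobNormLines} can be rewritten, after unwinding the definition of $\|\cdot\|_{\dot W^{s,r}(\T)}$, as
\[
\sum_{i=1}^d \int_{\T^{d-1}} \int_{\T} \int_{\T} \frac{|f(\ldots,t',\ldots)-f(\ldots,t,\ldots)|^r}{|t'-t|^{1+sr}}\, dt'\, dt\, dx,
\]
that is, a sum over directions $e_i$ of $\int_{\T^d}\int_{\T} |f(z+h e_i)-f(z)|^r |h|^{-1-sr}\, dh\, dz$. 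So the statement is equivalent to the assertion that the mixed Gagliardo energy $\iint_{\T^d\times\T^d} |f(x+y)-f(x)|^r |y|^{-d-sr}\, dy\, dx$ is comparable to the sum of its $d$ ``axis-restricted'' analogues. Via \eqref{torus-from-euclidean} it suffices to prove the Euclidean analogue for compactly supported $f$, which is cleaner since there is no wraparound to track.

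First I would prove the easy direction, that the right-hand side is $\lesssim$ the left-hand side. Here one estimates $\int_{\T}|f(z+he_i)-f(z)|^r|h|^{-1-sr}\,dh$ by enlarging the domain: integrate over the slab $\{y : |y_i|$ comparable to $|y|\}$, or more simply dominate $|h|^{-1-sr}$ by $c\,|h|^{d-1}\cdot|h e_i|^{-d-sr}$ after noting $|he_i|=|h|$, and integrate the remaining $d-1$ transverse variables of $y$ over a bounded region. Summing over $i$ and using that every $y\in\T^d$ lies in some cone $\{|y_i|\ge |y|/\sqrt d\}$ recovers (a constant times) the full seminorm on a bounded neighborhood of the diagonal; the contribution of $|y|$ bounded away from $0$ is controlled by $\|f\|_{L^r}^r$, which for $0<s\le 1$ is in turn controlled by $\|f\|_{W^{s,r}}^r$. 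The reverse direction — that the full seminorm is $\lesssim$ the sum of the one-dimensional ones — is the main obstacle and is proved by a telescoping/path argument: write the displacement $y$ as a sum of its $d$ axis components $y = \sum_{i=1}^d y_i e_i$ and telescope
\[
f(x+y)-f(x) = \sum_{j=1}^d \big( f(x+y^{(j)}) - f(x+y^{(j-1)}) \big), \qquad y^{(j)} = \sum_{i\le j} y_i e_i,
\]
so that the $j$-th difference is a pure shift in the $e_j$ direction by $y_j$. Applying the discrete $\ell^r$ (or Minkowski) inequality, raising to the $r$-th power, and then integrating against $|y|^{-d-sr}\, dy$: for each $j$ one changes variables, integrates out the $d-1$ variables $y_i$ ($i\ne j$) that do not appear in the $j$-th difference — here one uses $|y|\ge |y_j|$ to bound $|y|^{-d-sr}\le |y_j|^{-d-sr}$ and then the transverse integral $\int |y_j|^{-d-sr}\cdot(\text{stuff})$ must be handled carefully so the power of $|y_j|$ comes out to $-1-sr$; this is where one must be attentive, splitting the transverse region into $|y_i|\lesssim |y_j|$ and its complement, or integrating the transverse variables only over a box of the size needed. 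After the dust settles each term reproduces $\int_{\T^d}\int_\T |f(z+y_j e_j)-f(z)|^r|y_j|^{-1-sr}\,dy_j\,dz$ up to a dimensional constant, which is exactly the $j$-th summand on the right.

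I expect the genuinely delicate point to be bookkeeping the transverse integrations in the telescoping estimate so that the singular weight exponent transforms correctly from $-d-sr$ (in $y\in\R^d$) to $-1-sr$ (in the single variable $y_j$): naively integrating a $(d-1)$-fold integral of $|y_j|^{-d-sr}$ diverges, so one must either restrict the transverse variables to a region of diameter comparable to $|y_j|$ (losing nothing because outside that region the other telescoping terms dominate) or integrate them against a fixed bounded box and absorb the resulting lower-order terms into $\|f\|_{L^r}^r$. Since this proposition is standard (it is the ``anisotropic'' or ``by-slices'' characterization of Sobolev–Slobodeckij spaces, as in \cite{BBM, Ad}), I would ultimately keep the argument brief, carry out the telescoping explicitly, and cite \cite{BBM} for the remaining routine integral manipulations, with \eqref{torus-from-euclidean} reducing the periodic case to the Euclidean one.
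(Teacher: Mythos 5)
The paper offers no proof of this proposition; it is quoted from the literature (\cite{BBM, Ad}), so your attempt is measured against the standard argument. Your overall strategy (reduce to the Euclidean case via \eqref{torus-from-euclidean} and compare the full Gagliardo energy with its axis-restricted analogues) is the right one, and your telescoping half is essentially correct. In fact the point you flag as "genuinely delicate" there is not a problem at all: after the telescoping and the translation $x\mapsto x-y^{(j-1)}$, the $j$-th term is $\int |y|^{-d-sr}\,\omega_j(y_j)^r\,dy$ with $\omega_j(h)^r=\int|f(z+he_j)-f(z)|^r\,dz$ depending only on $y_j$, and the transverse integral $\int_{\R^{d-1}}(y_j^2+|y'|^2)^{-(d+sr)/2}\,dy'=c\,|y_j|^{-1-sr}$ converges by the scaling $y'=|y_j|u$ (since $d+sr>d-1$); the divergence you worry about only appears if one first replaces $|y|^{-d-sr}$ by $|y_j|^{-d-sr}$, which one should not do.

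The genuine gap is in the direction you call easy, namely that each directional seminorm is bounded by the full one. As written, your argument never introduces a general displacement $y$ into the argument of $f$: dominating $|h|^{-1-sr}$ by $c\,|h|^{d-1}|he_i|^{-d-sr}$ and then "integrating the transverse variables over a bounded region" only multiplies the one-dimensional integral $\int\int|f(z+he_i)-f(z)|^r|h|^{-1-sr}\,dh\,dz$ by a constant — it reproduces the directional seminorm, not the full one, because the integrand still contains only the difference along $e_i$. Since a single direction is a null set of directions, no manipulation of the weight alone can close this; you need the subadditivity of the $L^r$-modulus of continuity. Concretely, with $\omega(y)=\|f(\cdot+y)-f\|_{L^r}$ one has $\omega(he_i)\le \omega(he_i-y)+\omega(y)$, and averaging over $y$ in a ball of radius comparable to $|h|$ (centered, say, at $he_i/2$) gives $\omega(he_i)^r\lesssim |h|^{-d}\int_{B}\bigl(\omega(he_i-y)^r+\omega(y)^r\bigr)\,dy$ with $|he_i-y|,|y|\asymp|h|$; only then does $|h|^{-1-sr}\asymp |h|^{d-1}|y|^{-d-sr}$ convert the directional energy into a piece of the full Gagliardo energy. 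Inserting this step repairs the proof; without it, that direction is unproved.
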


Theorems \ref{thm:SobEmb} and \ref{thm:FracHolderEmb} imply that if $\alpha=s-\frac{d}{r}>0$ then every $f\in W^{s,r}(\T^d)$
has a H\"older continuous representative.  This is not necessarily true when $\alpha\leq 0$, but the next result shows that for non-negative functions there exist representatives which are H\"older continuous on almost every line parallel to a coordinate axis, and their zero set is exactly equal to the set $\Sigma(f)$ defined in \eqref{eqn:ZeroSetDef}.

For a non-negative function $f \in W^{s,r}(\T^d)$ denote
\begin{equation} \label{fstar-defs}
f^*(x)= \limsup_{\tau\rightarrow 0}f_{\tau}(x) \ \ \ \ \ \hbox{; } \ \ \ \ \ f_{\tau}(x)= \frac{1}{|I_\tau(x)|} \int_{I_\tau(x)} f(u) du,
\end{equation}
and note that by the Lebesgue differentiation theorem $f^*$ is equal to $f$ almost everywhere.

\begin{proposition}\label{prop:HolderEmbedLines}
Let $s>0$, $d\in \N$, and $1<r<\infty$ be such that $0<s-1/r\le 1$.  Then, for a nonnegative function $f \in W^{s,r}(\T^d)$, the representative $f^*$ of $f$
satisfies $f^*|_L$ is $(s-1/r)$--H\"older continuous on almost every line $L$ in $\T^d$ which is parallel to an axis, and its zero set is exactly $\Sigma(f)$.
\end{proposition}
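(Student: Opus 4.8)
The plan is to transfer the one-dimensional restriction estimate of Theorem~\ref{thm:FracHolderEmb} to the slices $f|_L$ and then identify the resulting zero set with $\Sigma(f)$. First I would reduce to $0<s\le 1$: by Theorem~\ref{thm:SobEmb} there exist $0<s'\le 1$ and $1<r'<\infty$ with $s-1/r=s'-1/r'$ and $s'-(d-1)/r'>0$ (in fact $=s-1/r>0$), and $W^{s,r}(\T^d)\subset W^{s',r'}(\T^d)$, so we may assume $0<s\le 1$. Now apply Proposition~\ref{sobolevonlines}: for each coordinate direction $i$,
\[
\int_{\T^{d-1}} \|f|_{L_i(x)}\|^r_{\dot W^{s,r}(\T)}\,dx \lesssim \|f\|^r_{\dot W^{s,r}(\T^d)}<\infty,
\]
so for each $i$ and almost every $x\in\T^{d-1}$ the restriction $f|_{L_i(x)}$ lies in $W^{s,r}(\T)$ (its $L^r$-finiteness on almost every line is the Fubini statement for $f\in L^r(\T^d)$). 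Since $\alpha:=s-1/r\in(0,1]$, if $\alpha<1$ then Theorem~\ref{thm:FracHolderEmb} applied on $\T$ shows $f|_{L_i(x)}$ has an $\alpha$-H\"older continuous representative; the borderline $\alpha=1$ case reduces to $\alpha<1$ by a further application of Theorem~\ref{thm:SobEmb} as above, or is handled directly since $W^{1,r}(\T)\subset C(\T)$ with a modulus-of-continuity bound. Doing this for $i=1,\dots,d$ on a common full-measure set of lines gives the H\"older continuity of $f^*|_L$ on almost every axis-parallel line.

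The second and more delicate step is to show that this H\"older continuous representative on a.e.\ line is exactly $f^*$ and that its zero set on that line is $L\cap\Sigma(w)$. The point is that two things a priori compete: the ``line representative'' obtained from the $W^{s,r}(\T)$ structure of the slice, and the $d$-dimensional representative $f^*$ defined by cube averages in \eqref{fstar-defs}. I would argue as follows. Fix a direction, say $i=d$, write points as $(x,t)$ with $x\in\T^{d-1}$, $t\in\T$. For a.e.\ $x$ the slice $g_x:=f|_{L_d(x)}$ is in $W^{s,r}(\T)$ and hence continuous; call its continuous representative $\tilde g_x$. By the one-dimensional Lebesgue differentiation theorem $\tilde g_x(t)=\lim_{\tau\to0}\frac1{2\tau}\int_{t-\tau}^{t+\tau}g_x$ at every $t$. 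On the other hand $f_\tau(x,t)$ is an average over a $d$-cube; using Fubini and the a.e.-line continuity together with $L^r$ bounds on the oscillation (precisely, the local estimate $\||g_x|\|_{\dot W^{s,r}}$ is integrable in $x$, so by a further Lebesgue-point argument in the $x$ variable the $(d-1)$-dimensional average of $\tilde g_x(t)$ over $I_\tau^{(d-1)}(x)$ converges to $\tilde g_{x}(t)$ for a.e.\ $x$ and every $t$), one gets $f^*(x,t)=\tilde g_x(t)$ for a.e.\ $x$ and every $t$. This is the step I expect to be the main obstacle: one must carefully interchange the limit defining $f^*$ with the restriction, controlling both the $t$-averaging (handled by 1D continuity) and the transverse $x$-averaging (handled by a Lebesgue-point argument applied to the map $x\mapsto \tilde g_x\in C(\T)$, which is measurable and locally $L^r$ by the slice estimate above). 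Having $f^*(x,\cdot)=\tilde g_x$ for a.e.\ $x$, continuity of $f^*$ on $L_d(x)$ follows.

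Finally I would identify the zero set. By definition \eqref{eqn:ZeroSetDef}, $\Sigma(f)=\{(x,t):\limsup_{\tau\to0}f_\tau(x,t)=0\}$, and since $f\ge0$ this is $\{f^*=0\}$. For a.e.\ line $L=L_d(x)$ we have shown $f^*|_L=\tilde g_x$ is continuous, so $\Sigma(f)\cap L=\{t:\tilde g_x(t)=0\}$, which is precisely the zero set of the H\"older representative on that line; for the reverse inclusion, at a point $t_0$ with $\tilde g_x(t_0)=0$ the H\"older bound \eqref{holder-embed-thm-eq} on $\T$ gives $\frac1{2\tau}\int_{t_0-\tau}^{t_0+\tau}g_x\lesssim \tau^{s}\|g_x\|_{\dot W^{s,r}}\to0$, and combined with the transverse Lebesgue-point property this yields $f^*(x,t_0)=0$, i.e.\ $(x,t_0)\in\Sigma(f)$. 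Thus on a.e.\ axis-parallel line the zero set of the continuous representative $f^*|_L$ equals $L\cap\Sigma(f)$, completing the proof. The routine measurability bookkeeping (that $x\mapsto \tilde g_x$ is a measurable map into $C(\T)$, so that ``almost every line'' is meaningful simultaneously for H\"older continuity, the representative identity, and the zero-set identity) I would state and leave to the reader.
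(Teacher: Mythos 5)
Your argument coincides with the paper's for the first half --- restrict to lines via Proposition~\ref{sobolevonlines} and invoke the one--dimensional case of Theorem~\ref{thm:FracHolderEmb} --- but diverges at exactly the point you flag as the main obstacle: reconciling the continuous representative of the slice with the cube--average representative $f^*$ of \eqref{fstar-defs}. The paper never identifies the two representatives. Instead it notes that the one--dimensional mollification $\phi^{\tau}(x,y_0)=(2\tau)^{-1}\int_{x-\tau}^{x+\tau}f(t,y_0)\,dt$ inherits the H\"older bound \emph{everywhere} (being an average of translates of a function satisfying it a.e.), bounds $|f_\tau(x_1,y_0)-f_\tau(x_2,y_0)|$ by the transverse average of $|\phi^{\tau}(x_1,z)-\phi^{\tau}(x_2,z)|$ over the $(d-1)$--dimensional cube $I_\tau(y_0)$, and takes $\limsup_{\tau\to0}$; the scalar Lebesgue differentiation theorem applied to $z\mapsto\|f(\cdot,z)\|_{\dot W^{s,r}(\T)}\in L^1(\T^{d-1})$ then gives the H\"older estimate for $f^*$ on a.e.\ line directly. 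Your route --- proving $f^*(x,\cdot)=\tilde g_x$ for a.e.\ $x$ via a Bochner--Lebesgue point argument for the map $x\mapsto\tilde g_x\in C(\T)$ combined with a Lebesgue point argument for $x\mapsto\|g_x\|_{\dot W^{s,r}(\T)}$ --- is correct and yields slightly more (the pointwise identification of the two representatives on a.e.\ line), at the cost of vector--valued measure theory that the paper's scalar argument avoids. Once that identification is in place, the zero--set statement is immediate in both treatments, since $f\ge0$ makes $\Sigma(f)=\{f^*=0\}$ by \eqref{eqn:ZeroSetDef}.

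Two caveats. Your opening reduction to $0<s\le1$ fails for $d\ge2$: keeping $s-1/r$ fixed while lowering $s$ by $\delta>0$ forces $1/r'=1/r-\delta$, hence $s'-d/r'=(s-d/r)+(d-1)\delta>s-d/r$, which is the wrong inequality for Theorem~\ref{thm:SobEmb}; and when $s-1/r=1$ no $s'\le1$ with $s'-1/r'=1$ exists at all. The paper's proof tacitly assumes $0<s\le1$ as well (Proposition~\ref{sobolevonlines} is only stated in that range), and that is the only case used later, so this is not fatal, but you should either restrict to $s\le1$ or supply a genuine reduction. Relatedly, the endpoint $s-1/r=1$ lies outside the hypothesis $0<\alpha<1$ of Theorem~\ref{thm:FracHolderEmb}, and your suggested fix via $W^{1,r}(\T)\subset C(\T)$ only produces exponent $1-1/r$, not Lipschitz; this endpoint is equally unaddressed in the paper and does not arise in its applications.
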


\begin{proof}
 It suffices to prove the proposition for lines parallel to the first coordinate axis, therefore for fixed $y_0\in\T^{d-1}$ we consider the function of one variable $f(x,y_0)$, with $x\in\T$. By (\ref{eqn:AltSobNormLines}) we have $f(x,y_0) \in W^{s,r}(\T)$ for almost every $y_0 \in\T^{d-1}$, in what follows we assume $y_0$ to satisfy this condition.
It follows from Theorem \ref{thm:FracHolderEmb} that for almost every $x_1,x_2\in\T$,
\[
|f(x_1,y_0)-f(x_2,y_0)|\leq C\|f(\cdot,y_0)\|_{\dot{W}^{s,r}(\T)}|x_1-x_2|^{s-1/r},
\]
where  $C=C(d,s,r)$. Therefore, for $\tau>0$, the function $\phi^{\tau} (x,y_0)=(2\tau)^{-1}\int_{x-\tau}^{x+\tau} f(t,y_0) dt$ satisfies the same inequality everywhere on $\T$, as can be checked by noting that $\phi^{\tau} (x,y)=(2\tau)^{-1}\int_{-\tau}^{\tau} f(t+x,y) dt$. Next, since
\[
|f_\tau(x_1,y_0)-f_\tau(x_2,y_0)| \le (2\tau)^{-(d-1)} \int_{I_\tau(y_0)} \left|\phi^{\tau}  (x_1,z)- \phi^{\tau}  (x_2,z) \right|dz,
\]
we conclude that
\begin{align*}
|f^*(x_1,y_0)-f^*(x_2,y_0)| &\le\limsup_{\tau\rightarrow 0} |f_\tau(x_1,y_0)-f_\tau(x_2,y_0)|\\
&\le C\limsup_{\tau\rightarrow 0}(2\tau)^{-(d-1)} \int_{I_\tau(y_0)} \|f(\cdot,z)\|_{\dot{W}^{s,r}(\T)}dz\cdot |x_1-x_2|^{s-1/r}.
\end{align*}
Now, \eqref{eqn:AltSobNormLines} implies that the function $G(z)= \|f(\cdot,z)\|_{\dot{W}^{s,r}(\T)} \in L^1(\T^{d-1})$ and therefore, by the Lebesgue Differentiation Theorem the last limit is finite for almost every $y_0\in\T^{d-1}$. The result follows.
\end{proof}

\subsection{Anisotropic Bessel potential spaces}\label{subsection-mixed}

Next we define anisotropic Bessel potential spaces and prove an embedding similar to Theorem \ref{thm:FracHolderEmb} for these spaces.  Similar spaces were studied in Chapter 5 of \cite{Triebel} and the references therein.

For  $\vec{s}=(s_1,...,s_d)\in (0,\infty)^d$ the anisotropic Bessel potential space, $H^{\vec{s}}(\T^d)$, is defined as the space consisting of all functions $f \in L^2(\T^d)$ for which the following
semi--norm is finite
\begin{align}
	\|f\|_{\dot{H}^{\vec{s}}(\T^d)}^2= \sum_{k \in \Z^d}\left( |k_1|^{2s_1}+\cdots +|k_d|^{2s_d}\right) |\widehat{f}(k)|^2.
\end{align}
When endowed with the norm $\|f\|_{H^{\vec{s}}(\T^d)}^2= \|f\|_{L^2(\T^d)}^2+ \|f\|_{\dot{H}^{\vec{s}}(\T^d)}^2$,  $H^{\vec{s}}(\T^d)$ is a Hilbert space.

\begin{lemma}\label{lem:nonsymCont}
Suppose $\vec{s}$ satisfies $\sum_{j=1}^d \frac{1}{s_j} < 2$.  If $f \in H^{\vec{s}}(\T^d)$ then $\widehat{f} \in \ell^1(\Z^d)$.  In particular, $f$ is continuous.
\end{lemma}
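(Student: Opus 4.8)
The plan is to prove $\widehat f\in\ell^1(\Z^d)$ by pairing the $\ell^2$ bound on $\widehat f$ coming from membership in $H^{\vec s}(\T^d)$ with a Cauchy--Schwarz argument against the weight $\bigl(|k_1|^{2s_1}+\cdots+|k_d|^{2s_d}\bigr)^{-1/2}$, and then checking that this weight lies in $\ell^2(\Z^d)$ precisely under the hypothesis $\sum_j 1/s_j<2$. Concretely, write
\[
\sum_{k\in\Z^d}|\widehat f(k)|
= |\widehat f(0)|+\sum_{k\neq 0}\frac{1}{\bigl(|k_1|^{2s_1}+\cdots+|k_d|^{2s_d}\bigr)^{1/2}}\cdot\bigl(|k_1|^{2s_1}+\cdots+|k_d|^{2s_d}\bigr)^{1/2}|\widehat f(k)|,
\]
and apply Cauchy--Schwarz. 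The second factor is (bounded by) $\|f\|_{\dot H^{\vec s}(\T^d)}<\infty$, so the whole thing is finite as soon as
\[
\sum_{k\neq 0}\frac{1}{|k_1|^{2s_1}+\cdots+|k_d|^{2s_d}}<\infty .
\]
Once $\widehat f\in\ell^1$, the Fourier series $\sum_k \widehat f(k)e^{2\pi i\langle k,x\rangle}$ converges absolutely and uniformly, hence $f$ agrees a.e.\ with a continuous function, giving the "in particular" clause.

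The substance, then, is the convergence of $S:=\sum_{k\neq 0}\bigl(|k_1|^{2s_1}+\cdots+|k_d|^{2s_d}\bigr)^{-1}$. I would handle this by the usual dyadic/level-set decomposition. For each nonzero $k$ one has $|k_1|^{2s_1}+\cdots+|k_d|^{2s_d}\gtrsim \max_j |k_j|^{2s_j}$, so it suffices to bound $\sum_{k\neq 0}\bigl(\max_j|k_j|^{2s_j}\bigr)^{-1}$. Split $\Z^d\setminus\{0\}$ into the $d$ regions $R_i=\{k:\ |k_i|^{2s_i}=\max_j|k_j|^{2s_j}\}$ (breaking ties arbitrarily); on $R_i$ the denominator is comparable to $|k_i|^{2s_i}$, and $R_i$ is contained in the set where $|k_j|\le |k_i|^{s_i/s_j}$ for all $j$. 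Summing first over $k_j$, $j\neq i$, at fixed $k_i=n$ produces at most $\prod_{j\neq i}\bigl(1+2\,n^{s_i/s_j}\bigr)\lesssim n^{\,s_i\sum_{j\neq i}1/s_j}$ lattice points, so
\[
\sum_{k\in R_i}\frac{1}{\max_j|k_j|^{2s_j}}\lesssim \sum_{n\geq 1}\frac{n^{\,s_i\sum_{j\neq i}1/s_j}}{n^{2s_i}}=\sum_{n\geq 1} n^{-s_i\left(2-\sum_{j\neq i}1/s_j\right)}=\sum_{n\geq 1} n^{-s_i\left(2-\sum_{j=1}^d 1/s_j\right)-1}.
\]
The exponent on $n$ is $-\bigl(1+s_i(2-\sum_j 1/s_j)\bigr)<-1$ exactly because $\sum_j 1/s_j<2$ and $s_i>0$, so this is a convergent $p$-series. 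Summing the $d$ regions gives $S<\infty$.

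The main (and only real) obstacle is getting the counting exponent in the dyadic argument to come out to something strictly summable, i.e.\ verifying that the arithmetic $-s_i\bigl(2-\sum_j 1/s_j\bigr)-1<-1$ is genuinely strict under the stated hypothesis; this is where the condition $\sum_j 1/s_j<2$ is used, and it is the place to be careful that the inequality is not merely $\le$. Everything else — the Cauchy--Schwarz step, the comparison $|k_1|^{2s_1}+\cdots+|k_d|^{2s_d}\asymp\max_j|k_j|^{2s_j}$, and the passage from $\widehat f\in\ell^1$ to continuity via uniform convergence of the Fourier series — is routine. (An alternative to the region decomposition is to compare $S$ with the integral $\int_{|x|\ge 1}(|x_1|^{2s_1}+\cdots+|x_d|^{2s_d})^{-1}\,dx$ and evaluate that by the anisotropic scaling $x_j\mapsto t^{1/(2s_j)}x_j$, which reduces convergence at infinity to $\int_1^\infty t^{-2}t^{\frac12\sum_j 1/s_j}\,\frac{dt}{t}<\infty$, again exactly the hypothesis; I would mention this as the conceptual reason the condition is what it is, but carry out the lattice-sum version for rigor.)
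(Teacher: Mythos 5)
Your proof is correct and follows the same overall strategy as the paper's: apply Cauchy--Schwarz against the weight $\bigl(|k_1|^{2s_1}+\cdots+|k_d|^{2s_d}\bigr)^{-1/2}$ and reduce everything to the convergence of $S=\sum_{k\neq 0}\bigl(|k_1|^{2s_1}+\cdots+|k_d|^{2s_d}\bigr)^{-1}$. The only place you diverge is in verifying $S<\infty$: the paper compares $S$ to an integral, performs the anisotropic substitution $t_j=x_j^{2s_j}$ (producing the Jacobian factors $t_j^{1/s_j-1}$), and passes to spherical coordinates to land on $\int_0^\infty r^{\sum_j 1/s_j - 2}\,\tfrac{dr}{r}$ near infinity, which converges exactly when $\sum_j 1/s_j<2$; you instead decompose $\Z^d\setminus\{0\}$ into the regions where a single coordinate dominates and count lattice points directly, arriving at the exponent $-1-s_i\bigl(2-\sum_j 1/s_j\bigr)<-1$. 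Both verifications are correct and of comparable length; your lattice-count version avoids the (slightly glossed) sum-to-integral comparison for an anisotropic weight, while the paper's integral version makes the role of the threshold $\sum_j 1/s_j=2$ transparent in one line --- a trade-off you yourself note in your closing parenthetical.
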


\begin{proof}
We first note that
\begin{equation}
S = \sum_{k \in \Z^d}\frac{1}{1+|k_1|^{2s_1}+\cdots +|k_d|^{2s_d}}<\infty.
\end{equation}
Indeed, estimating the sum $S$ by an integral, and making an appropriate change of variables, we find that

\[
S \lesssim \int_{[0, \infty]^d} \frac{  \prod_{j=1}^d  t_j^{\frac{1}{s_j}-1} }{1+ t_1^2 + \cdots +t_d^2} d\vec{t}.
\]
Converting to spherical coordinates, and integrating away the angular terms, gives
\[
S \lesssim \int_{0}^\infty \frac{ r^{\left( \sum_{j=1}^d \frac{1}{s_j} \right) - d}}{1+ r^2} r^{d-1} dr,
\]
which is finite as $\sum_{j=1}^d \frac{1}{s_j} < 2$. The lemma now follows from a standard application of the Cauchy Schwartz inequality.
\end{proof}

Next, we show that the conditions of Lemma \ref{lem:nonsymCont} imply that a function in $ H^{\vec{s}}(\T^d)$ is not only continuous, but in fact satisfies a stronger mixed--H\"older continuity property. For this goal, given $\vec{s}\in (0,\infty)^d$, denote
\begin{equation} \label{eqn:alphaell}
\alpha_{\ell}=s_{\ell}\left(1-\frac{1}{2}\sum_{j=1}^d \frac{1}{s_j}\right), \qquad \ell=1,...,d.
\end{equation}
The quantities $\alpha_{\ell}$ play a similar role as $\alpha$ from Section \ref{alphasection}.
In particular, if $s_{\ell}=s$ for all $\ell$, and $r=2$, then  $\alpha_{\ell}=s-\frac{d}{2}=\alpha$.

We are now ready to prove an anisotropic analogue of Theorem \ref{thm:FracHolderEmb}.

\begin{theorem}\label{thm:nonsymSobolevEmb}
Let $G \in H^{\vec{s}}(\T^d)$ where $\sum_{j=1}^d \frac{1}{s_j}<2$.  For $\ell=1,...,d$, let $\alpha_{\ell}$ be as in \eqref{eqn:alphaell}, and assume that $0<\alpha_{\ell}<1$. Then, there exists a constant $C$ such that
	if $x_1,x_2\in\T^d$ are equal in all but their $\ell$'th coordinate then
\begin{align} |G(x_1)- G(x_2)| \le C R(|x_1^{(\ell)}-x_2^{(\ell)}|) |x_1^{(\ell)}-x_2^{(\ell)}|^{\alpha_{\ell}},\nonumber \end{align}
where $\lim_{\tau\rightarrow 0} R(\tau)=0$, and $x_j^{(\ell)}$ is the $\ell$'th coordinate of $x_j$, $j=1,2$.
\end{theorem}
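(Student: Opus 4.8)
The plan is to mimic the proof of Theorem \ref{thm:FracHolderEmb}, but to replace isotropic balls by \emph{anisotropic rectangles} adapted to the vector $\vec{s}$, so that the Giusti-type averaging lemma applies with the correct scaling in each coordinate. Concretely, for $\tau>0$ associate to a point $x$ the rectangle $Q_\tau(x)$ whose side-length in the $j$th direction is comparable to $\tau^{s_\ell/s_j}$ (normalized so that the $\ell$th side has length $\asymp\tau$), where $\ell$ is the distinguished coordinate appearing in the statement. The exponent $\alpha_\ell$ in \eqref{eqn:alphaell} is exactly the H\"older exponent one obtains when measuring the decay of the $L^2$-oscillation of $G$ over $Q_\tau(x)$ against the anisotropic seminorm $\|G\|_{\dot H^{\vec s}}$, since a Fourier-side computation shows that $|e^{2\pi i k_j y_j}-1|^2$ summed against the weight $|k_1|^{2s_1}+\cdots+|k_d|^{2s_d}$ over the rectangle of the above dimensions produces a factor $\tau^{2\alpha_\ell}$ (this is the same dimensional bookkeeping that made $S<\infty$ in Lemma \ref{lem:nonsymCont}).

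First I would fix $\ell$ and $x_1,x_2$ differing only in coordinate $\ell$, set $\tau=|x_1^{(\ell)}-x_2^{(\ell)}|$, let $z$ be their midpoint, and write the telescoping inequality
\[
|G(x_1)-G(x_2)| \le |G(x_1)-\langle G\rangle_{Q_\tau(x_1)}| + |\langle G\rangle_{Q_\tau(x_1)}-\langle G\rangle_{Q_\tau(x_2)}| + |\langle G\rangle_{Q_\tau(x_2)}-G(x_2)|,
\]
exactly as in \eqref{eqn:HolderBound541}. Since $\sum 1/s_j<2$, Lemma \ref{lem:nonsymCont} gives that $G$ is continuous, so every point is a Lebesgue point and the pointwise values make sense. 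The two outer terms are handled by an anisotropic version of the Giusti/Campanato averaging lemma (Lemma 2.2 in \cite{Giusti}): one estimates $|G(x)-\langle G\rangle_{Q_\tau(x)}|$ by a telescoping sum over dyadically shrinking rectangles $Q_{2^{-m}\tau}(x)$, where at each scale $|\langle G\rangle_{Q_{2^{-m}\tau}}-\langle G\rangle_{Q_{2^{-m-1}\tau}}|\lesssim (2^{-m}\tau)^{\alpha_\ell}$ times an $L^2$-oscillation quantity $\Phi_{\vec s}(\tau)$ which, by a Plancherel estimate, is dominated by $\|G\|_{\dot H^{\vec s}}$ and moreover tends to $0$ with $\tau$ by dominated convergence in the Fourier series; summing the geometric series in $m$ produces the desired $R(\tau)\tau^{\alpha_\ell}$ with $R(\tau)\to0$. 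The middle term is bounded directly: since $Q_\tau(x_1)$ and $Q_\tau(x_2)$ both sit inside $Q_{C\tau}(z)$ and differ only by the shift of length $\tau$ in coordinate $\ell$, one writes $\langle G\rangle_{Q_\tau(x_1)}-\langle G\rangle_{Q_\tau(x_2)}$ as an average of differences $G(u)-G(v)$ with $u,v$ differing by that shift, applies Cauchy--Schwarz together with the integral representation of $\|G\|_{\dot H^{\vec s}}^2$ restricted to the relevant region, and again gets $\tau^{\alpha_\ell}$ times a quantity vanishing as $\tau\to0$.

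The main obstacle I expect is setting up the anisotropic Campanato/Giusti estimate cleanly: the cited Lemma 2.2 of \cite{Giusti} and Lemma 8.1 of \cite{DPV} are stated for isotropic balls, so I will need to re-derive the dyadic telescoping bound for the rectangles $Q_\tau(x)$ and verify that the oscillation functional $\Phi_{\vec s}(\tau)=\bigl(\sup_{0<\rho<\tau}\rho^{-2\alpha_\ell}|Q_\rho(x)|^{-1}\int_{Q_\rho(x)}|G-\langle G\rangle_{Q_\rho(x)}|^2\bigr)^{1/2}$ is controlled by $\|G\|_{\dot H^{\vec s}}$ with the right power of $\rho$. This is where the precise choice of the exponents $s_\ell/s_j$ in the side-lengths is forced, and it is the spot where the condition $0<\alpha_\ell<1$ is used (the upper bound $\alpha_\ell<1$ guarantees the dyadic series converges geometrically; $\alpha_\ell>0$ guarantees the individual scale bounds decay). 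Once this lemma is in place, the rest is bookkeeping parallel to Theorem \ref{thm:FracHolderEmb}, and, as there, I would only sketch it and leave routine verifications to the reader. Finally I would remark that, just as Theorem \ref{thm:FracHolderEmb} reduces the case $s>1$ to $0<s<1$ via the embedding Theorem \ref{thm:SobEmb}, here no such reduction is needed since the statement is already phrased for general $\vec s$ with $\sum 1/s_j<2$ and the rectangle argument is insensitive to whether individual $s_j$ exceed $1$.
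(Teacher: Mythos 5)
Your plan is a genuinely different route from the paper's. The paper never touches physical--space averages: it works entirely on the Fourier side, using the absolute convergence of the Fourier series (Lemma \ref{lem:nonsymCont}) to write $|G(x_1)-G(x_2)|\lesssim\sum_k|\widehat G(k)||\sin(\pi\tau k_\ell)|$, splitting at a frequency cutoff $N$, applying Cauchy--Schwarz against the weight $|k_1|^{2s_1}+\cdots+|k_d|^{2s_d}$ in each piece, and then choosing $N=N(\tau)$ adaptively so that $\Phi(N)/N\approx\tau$, where $\Phi(N)$ is the tail $\bigl(\sum_{|k_\ell|>N}(|k_1|^{2s_1}+\cdots)|\widehat G(k)|^2\bigr)^{1/2}$. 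That adaptive choice is what simultaneously produces the exponent $\alpha_\ell$ and the factor $R(\tau)\to0$. Your anisotropic rectangles with sides $\tau^{s_\ell/s_j}$ are the right geometric objects (they reappear in the paper's proof of Theorem \ref{thm:nonsymMult}), but as a proof strategy the Campanato route has a genuine gap at exactly the step you flag as the main obstacle, and I do not think it closes the way you expect.

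The difficulty is that the anisotropic Bessel seminorm $\|G\|_{\dot H^{\vec s}}$, unlike the Gagliardo seminorm $\|f\|_{\dot W^{s,r}(B)}$ used in Theorem \ref{thm:FracHolderEmb}, has no local integral representation. Your functional $\Phi_{\vec s}(\tau)$ carries the normalization $|Q_\rho(x)|^{-1}=\rho^{-s_\ell\,\ell(\vec s)}$, while the only Plancherel identity available, $\int_{\T^d}|G(\cdot+h)-G|^2=\sum_k|\widehat G(k)|^2|e^{2\pi i\langle k,h\rangle}-1|^2$, is global; enlarging $\int_{Q_\rho}$ to $\int_{\T^d}$ before dividing by $|Q_\rho|$ leaves that negative power of $\rho$ uncompensated, whereas in the isotropic proof the kernel $|y|^{-(d+sr)}$ of the \emph{local} seminorm is precisely what absorbs it. More importantly, even granting $\Phi_{\vec s}(\tau)\lesssim\|G\|_{\dot H^{\vec s}}$, that yields only H\"older continuity with a fixed constant; the content of the theorem (and what Theorem \ref{thm:nonsymMult} needs) is the extra factor $R(\tau)\to 0$, and ``dominated convergence in the Fourier series'' is not a mechanism for it --- there is no shrinking domain of integration to appeal to. The paper manufactures the decay through the coupling of $N(\tau)$ to the Fourier tail, ending with $\tau^{\alpha_\ell}\Phi(N(\tau)-1)^{1-\alpha_\ell}$. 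Finally, your closing remark that the rectangle argument is insensitive to whether individual $s_j$ exceed $1$ is not correct for a first--difference scheme: the bound $|e^{2\pi ik_jh_j}-1|\lesssim|k_jh_j|^{\gamma}$ holds only for $\gamma\le1$, so physical--space first differences cannot detect regularity of order $s_\ell>1$, which the hypotheses allow (e.g.\ $d=2$, $s_1=s_2=3/2$). To keep your architecture you would need higher--order differences or an honest anisotropic Campanato theory; both are substantial detours compared with the frequency splitting actually used.
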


\begin{proof}
To avoid cumbersome notations, we present the proof in the case of two variables. The general case can be proved in much the same way, we leave the details to the reader. Without loss of generality, we may assume that $\ell=1$.

Fix $N\in\N$. Since $\sum_{j=1}^2 \frac{1}{s_j}<2$, the Fourier series of $G$ converges absolutely and so for $x_1,x_2,y\in\T$ and $\tau=|x_2-x_1|$, we may apply the Cauchy Schwartz inequality and find that,
\begin{align}\label{sigma}
	|G(x_1,y)-G(x_2,y)|\lesssim  \sum_{(k,n)\in \Z^2} |\widehat{G}(k,n)| | \sin(\pi \tau k)| \leq L_1^{\frac{1}{2}}+L_2^{\frac{1}{2}},
\end{align}
where
\[
L_1=\left(\sum_{(k,n)\in \Z^2; |k|\leq N} ({|k|^{2s_1}+|n|^{2s_2}})|\widehat{G}(k,n)|^2\right)\left(\sum_{(k,n)\in \Z^2; |k|\leq N} \frac{| \sin(\pi \tau k)|^2}{|k|^{2s_1}+|n|^{2s_2}}\right),
\]
and $L_2$ is the similar expression with the restriction $|k|>N$ replacing that of $|k|\leq N$.

To estimate $L_1$ we first note that for a fixed $k\neq 0$, since $2s_2>1$, we have
\begin{align}\label{sumestimate}
\sum_{n \in \Z} \frac{1}{|k|^{2s_1}+|n|^{2s_2}}&\lesssim \int_{0}^\infty\frac{dt}{|k|^{2s_1}+|t|^{2s_2}}\lesssim \int_0^{\infty}\frac{dt}{\big(|k|^{\frac{s_1}{s_2}}+t\big)^{2s_2}}\lesssim |k|^{-2s_1+\frac{s_1}{s_2}}.
\end{align}
Combining with the definition of the anisotropic Bessel norm, and the estimate $|\sin \theta|\leq|\theta|$, we therefore obtain,
\[
\begin{aligned}
L_1&\lesssim \|G\|_{\dot{H}^{\vec{s}}(\T^d)}^2 \tau^2 \sum_{k =1}^Nk^{2-2s_1+\frac{s_1}{s_2}}\\
&\lesssim \|G\|_{\dot{H}^{\vec{s}}(\T^d)}^2 \tau^2 N^{2-2\alpha_1}.
\end{aligned}
\]

We turn to the estimate of $L_2$. First, we denote
\[
\Phi(N)=\left(\sum_{(k,n)\in \Z^2; |k|> N} ({|k|^{2s_1}+|n|^{2s_2}})|\widehat{G}(k,n)|^2\right)^{\frac{1}{2}}.
\]
Applying (\ref{sumestimate}) once again we obtain,

\[
L_2\lesssim \Phi(N)^2\sum_{|k|\geq |N|}|k|^{-2s_1+\frac{s_1}{s_2}}\lesssim  \Phi(N)^2|N|^{1-2s_1+\frac{s_1}{s_2}}\lesssim \Phi(N)^2|N|^{-2\alpha_1}.
\]
Plugging these estimates into (\ref{sigma}) we therefore find that,
\[
\begin{aligned}
|G(x_1,y)-G(x_2,y)|&\lesssim   \tau N^{1-\alpha_1}+\Phi(N)|N|^{-\alpha_1}\\&=  N^{1-\alpha_1}\left( \tau + \Phi(N)/N\right),
\end{aligned}
\]
with the implied constant depends on $\|G\|_{\dot{H}^{\vec{s}}}$, but not on $x_1,x_2,y$.

If $\Phi(N)$ is equal to zero for some $N$, then $G$ is a trigonometric polynomial in the first variable, and the claim trivially follows.  We can therefore assume that $\Phi(N)$ is different from zero for every $N$, and we note that, since $G \in H^{\vec{s}}(\T^d)$, this function is decreasing to zero as $N$ tends to $\infty$. For small enough $\tau$, choose $N=N(\tau)$ to be the integer satisfying $\Phi(N)/N\leq \tau<\Phi(N-1)/(N-1)$. Note that as $\tau$ tends to zero, the corresponding $N(\tau)$ tends to infinity, and therefore $\Phi(N(\tau)-1)$ tends to zero. Plugging $N=N(\tau)$ into the above estimate we get
\[
\begin{aligned}
|G(x_1,y)-G(x_2,y)|&\lesssim  N^{1-\alpha_1}\tau=\tau^{\alpha_1}(N\tau)^{1-\alpha_1}\\
&\leq \tau^{\alpha_1}(2\Phi(N-1))^{1-\alpha_1}.
\end{aligned}
\]
As $\Phi(N(\tau)-1)$ tends to zero when $\tau$ tends to zero, and $\alpha_1<1$, the result follows.

\end{proof}

\subsection{Sobolev spaces over \texorpdfstring{$\R^d$}{Rd}}
Most of the results listed above hold when the Sobolev spaces over $\T^d$ are replaced by their analog over $\R^d$.  For $0<s<1$, the seminorm for $W^{s,r}(\R^d)$ is defined by
\begin{align*}
\|f\|_{\dot{W}^{s,r}(\R^d)}^r = \int_{\R^d} \int_{\R^d} \frac{|f(x)-f(y)|^r}{|x-y|^{d+sr}} dy dx,
\end{align*}
and the space $W^{s,r}(\R^d)$ is defined analogously to Definition \ref{def:SobDef}.  Theorems \ref{thm:SobEmb}, \ref{thm:FracHolderEmb}, and \ref{thm:PI}, and Proposition \ref{sobolevonlines} all hold when $\T^d$ is replaced by $\R^d$.

The anisotropic Bessel potential spaces can also be considered over $\R^d$ instead of $\T^d$ (See \cite{Triebel}).  With a nearly identical proof, it can be shown that Theorem \ref{thm:nonsymSobolevEmb} holds when $H^{\vec{s}}(\R^d)$ replaces $H^{\vec{s}}(\T^d)$.

\subsection{Auxiliary constructions.}

In this subsection we construct some examples of functions which belong to certain Sobolev spaces. These functions will be used in our proofs for different sharpness statements.

\subsubsection{} We start with the following construction. Let $\eta\in C^\infty(\R^d)$ be a nonnegative function satisfying $\eta(x)=1$ for $x \in B_{1/8}(0)$, $\eta(x)=0$ for $x \notin B_{1/4}(0)$, and $\eta(x)$ positive for $x \in B_{1/4}(0)$.
For $\beta>0$, let $w_\beta$ be the $\Z^d$--periodic function defined by $w_\beta(x)=(1-\eta(x))+\eta(x)|x|^\beta$ for $x \in [-1/2, 1/2)^d$.

\begin{proposition}\label{prop:MultCounter}
Fix $0<\beta<1$. Then,
\begin{itemize}
	\item[i.] $w_\beta(0)=0$, and $w$ has no other zero in $\T^d$.
	\item[ii.] $u_\beta=1/w_\beta \in L^{\frac{2q}{q-2}}(\T^d) \subset \mathcal{M}_{2}^q$ whenever $q>2d/(d-2\beta)$.
	\item[iii.] $w_\beta \in W^{s,r}(\T^d)$ for any $s$ and $r$ satisfying $0<s\le d$, $1<r <\infty$, and $s-\frac{d}{r}<\beta$.
\end{itemize}
\end{proposition}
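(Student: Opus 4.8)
\emph{Parts (i) and (ii) are straightforward; part (iii) reduces to a single Gagliardo--seminorm estimate for a model singular function.} For (i): since $0\in B_{1/8}(0)$ we have $\eta(0)=1$, so $w_\beta(0)=(1-1)+1\cdot 0^\beta=0$ (using $\beta>0$). If $x\in[-1/2,1/2)^d\setminus\{0\}$, then $0\le\eta(x)\le1$ makes both $1-\eta(x)$ and $\eta(x)|x|^\beta$ nonnegative, and at least one is strictly positive (the first if $\eta(x)<1$, the second if $\eta(x)=1$, as $|x|>0$), so $w_\beta(x)>0$; since $\mathrm{supp}\,\eta\subset B_{1/4}(0)$, the periodic $w_\beta$ is continuous on $\T^d$ and equals $1$ near $\partial[-1/2,1/2]^d$. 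For (ii): $w_\beta(x)=|x|^\beta$ on $B_{1/8}(0)$ (where $\eta\equiv1$), while on the compact complement $\{x\in[-1/2,1/2]^d:|x|\ge 1/8\}$ the function $w_\beta$ is continuous and, by (i), strictly positive, hence bounded below. Therefore, with $p:=\frac{2q}{q-2}$,
\[
\int_{\T^d}|u_\beta|^{p}\ \asymp\ 1+\int_{|x|<1/8}|x|^{-\beta p}\,dx ,
\]
which is finite exactly when $\beta p<d$, i.e.\ $\beta\,\frac{2q}{q-2}<d$, i.e.\ $q>\frac{2d}{d-2\beta}$; for such $q$ one gets $u_\beta\in L^{2q/(q-2)}(\T^d)$, and the inclusion $L^{2q/(q-2)}(\T^d)\subset\mathcal{M}_2^q$ recorded in Section~\ref{sec:Introduction} (H\"older plus Hausdorff--Young) yields $u_\beta\in\mathcal{M}_2^q$.

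For (iii): by \eqref{torus-from-euclidean} it suffices to prove $\psi w_\beta\in W^{s,r}(\R^d)$, and since on the fundamental domain $w_\beta=1+\eta\,(|x|^\beta-1)$, periodizing shows $\psi w_\beta$ is, up to translations and smooth modifications of cutoffs and a $C^\infty_c$ term, the model function $\phi_\beta(x):=\eta(x)|x|^\beta$. So the claim reduces to: $\phi_\beta\in W^{s,r}(\R^d)$ whenever $0<s\le d$, $1<r<\infty$ and $s-d/r<\beta$. The engine, which I would isolate as a lemma, is: \emph{if $g\in C^\infty(\R^d\setminus\{0\})$ has support in $\overline{B_{1/4}(0)}$ and $g(x)=P(x/|x|)\,|x|^\gamma$ for $0<|x|<1/8$ with $P\in C^\infty(S^{d-1})$ and $\gamma>-d/r$, then $g\in\dot{W}^{t,r}(\R^d)$ for every $t\in(0,1)$ with $t-d/r<\gamma$} (and trivially for every such $t$ if $\gamma\ge1$, since $g$ is then Lipschitz).

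Granting the lemma, inspect the Sobolev--Slobodeckij norm of $\phi_\beta$. For $|\nu|_1\le\lfloor s\rfloor$ we have $|D^\nu\phi_\beta|\lesssim|x|^{\beta-|\nu|_1}$ near $0$ while $D^\nu\phi_\beta$ is smooth, compactly supported elsewhere, so $D^\nu\phi_\beta\in L^r(\R^d)$ because $(\beta-\lfloor s\rfloor)r>-d$, equivalently $\lfloor s\rfloor-d/r<\beta$, which follows from $s-d/r<\beta$; hence $\phi_\beta\in W^{\lfloor s\rfloor,r}(\R^d)$. If $\{s\}>0$, then for $|\alpha|_1=\lfloor s\rfloor$ the function $D^\alpha\phi_\beta$ equals $P_\alpha(x/|x|)\,|x|^{\beta-\lfloor s\rfloor}$ near $0$ (as $\phi_\beta(x)=|x|^\beta$ there) and is otherwise smooth and compactly supported, so the lemma applies with $\gamma=\beta-\lfloor s\rfloor$ (which exceeds $\{s\}-d/r\ge-d/r$) and $t=\{s\}$; its hypothesis $t-d/r<\gamma$ is exactly $s-d/r<\beta$, so $D^\alpha\phi_\beta\in\dot{W}^{\{s\},r}(\R^d)$ and $\phi_\beta\in W^{s,r}(\R^d)$.

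It remains to sketch the lemma. Split $\R^d\times\R^d$ into $\{|x|,|y|\ge1/16\}$, where $g$ is Lipschitz with compact support (vanishing to infinite order on $\partial B_{1/4}(0)$), so its $\dot{W}^{t,r}$-seminorm there is finite for $t<1$ by the usual estimate; and, by symmetry, the region $|x|<1/16$. In the latter put $\rho=|x|$, integrate in $y$, and decompose by $|y|\le\rho/2$, $\ \rho/2<|y|\le 2\rho$ (subdivided by whether $|x-y|\lesssim\rho$), and $|y|>2\rho$. Using $|g(x)-g(y)|\lesssim\rho^\gamma$ in general and $|g(x)-g(y)|\lesssim\rho^{\gamma-1}|x-y|$ when $x,y$ lie in a common annulus away from $0$, the $y$-integral over each region is $\asymp\rho^{(\gamma-t)r}$, and $\int_{|x|<1/16}\rho^{(\gamma-t)r}\,dx<\infty$ precisely when $(\gamma-t)r+d>0$, i.e.\ $t-d/r<\gamma$. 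I expect the region bookkeeping to be the only genuine difficulty, especially the case $\gamma<0$ (exactly the fractional part when $\lfloor s\rfloor\ge1$): there $g$ is unbounded at the origin, and the region $|y|\le\rho/2$ contributes $\rho^{-d-tr}\int_{|y|\le\rho/2}|y|^{\gamma r}\,dy\asymp\rho^{(\gamma-t)r}$, the inner integral converging exactly because $\gamma>-d/r$. (For $r\ge2$ one can instead bypass the direct computation: $\widehat{\phi_\beta}(\xi)=O(|\xi|^{-d-\beta})$ gives $\phi_\beta\in H^\sigma(\R^d)$ for every $\sigma<\beta+d/2$, and Theorem~\ref{thm:SobEmb} then delivers $W^{s,r}(\R^d)$ for all $r\ge2$ with $s-d/r<\beta$.)
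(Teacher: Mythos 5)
Your parts (i) and (ii) coincide with the paper's argument. For part (iii) you take a genuinely different route. The paper proves the claim only for $s=d$: there $W^{d,r}(\T^d)$ involves no Gagliardo seminorm at all, the pointwise bound $|D^\alpha w_\beta(x)|\lesssim|x|^{\beta-d}$ for $|\alpha|_1=d$ gives $\int_{B_{1/8}}|x|^{(\beta-d)r}dx<\infty$ exactly when $d-d/r<\beta$, and Theorem \ref{thm:SobEmb} then descends to every admissible pair $(s,r)$. You instead attack general (possibly fractional) $s$ head-on: reduce to the model function $\eta(x)|x|^\beta$, put the integer-order derivatives in $L^r$ by the same power-counting the paper uses, and control the $\dot W^{\{s\},r}$-seminorm of the top-order derivatives via a lemma on functions $P(x/|x|)|x|^\gamma$, proved by the standard three-region decomposition in $y$ relative to $\rho=|x|$. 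Your region bookkeeping is correct (each regime contributes $\asymp\rho^{(\gamma-t)r}$, integrable in $x$ precisely when $t-d/r<\gamma$, with $\gamma>-d/r$ handling the unbounded case), so the argument is sound, though the lemma remains a sketch whereas the paper's whole proof is a two-line computation plus one citation. What your version buys is independence from the embedding theorem (which, in the paper's reduction, requires a small extra check that an admissible intermediate exponent $r$ with $1<r\le r'$ exists) and a reusable, quantitative lemma for homogeneous singularities; your closing Fourier-decay remark is a clean shortcut but recovers only the range $r\ge2$, so it cannot replace the main argument for $1<r<2$.
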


\begin{proof}
Part (i) is clearly true by the definition of $w_\beta$.  For part (ii), it suffices to consider $w_\beta$ in $B_{1/8}(0)$, as the function is bounded away from zero outside of this region.  We have,
\begin{align*}
	\int_{B_{1/8}(0)} |w_\beta(x)|^{\frac{-2q}{q-2}} dx = \int_{B_{1/8}(0)} |x|^{-\frac{2q\beta}{q-2}} dx,
\end{align*}
and this is finite if and only if $\frac{2q\beta}{q-2} <d$, which is equivalent to $q>2d/(d-2\beta)$.  The inclusion, $L^{\frac{2q}{q-2}}(\T^d) \subset \mathcal{M}_{2}^q$ follows from the discussion after Theorem \ref{thm:ScalarValuedLargeZeroResult}.

We turn to part (iii). It is enough to prove it for $s=d$, as Theorem \ref{thm:SobEmb} would then imply that the result holds for all $0<s\leq d$.
Note that away from $0$, $w_\beta$ is smooth, and so it suffices to consider a neighborhood of $0$.
Let $D$ denote some $d^{th}$ order partial derivative operator (in the sense of distribution).  A straightforward induction argument shows that $|Dw_\beta(x)|\lesssim |x|^{\beta-d}$ for $x \in B_{1/8}(0)$, and thus,
\[ \int_{B_{1/8}(0)} |D w_\beta (x)|^r dx \lesssim \int_{B_{1/8}(0)} |x|^{(\beta-d)r} dx<\infty, \]
whenever $(\beta-d)r >-d$.  This is equivalent to $d-\frac{d}{r} < \beta$.
\end{proof}

\subsubsection{}  Fix $0< \beta<1$, denote $h_{\beta} \in L^2(\R)$ by
\begin{displaymath}
   h_{\beta}(x) = \left\{
     \begin{array}{lr}
       0 & : x \ge \frac{1}{2}\\
       (\frac{1}{2}-|x|)^{\beta/2} & : -\frac{1}{2}\le x \le \frac{1}{2}\\
     \end{array}
   \right.
\end{displaymath}
We have the following.

\begin{proposition}\label{prop:CqCounterD1}
We have $h_{\beta} \in W^{s,2}(\R)$ for all $s<\frac{1+\beta}{2}$.
\end{proposition}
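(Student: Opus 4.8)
The plan is to reduce to a local computation near the singular point $x = 1/2$ (and, by symmetry, $x = -1/2$), since $h_\beta$ is smooth away from $\pm 1/2$ and compactly supported. First I would note that $h_\beta \in L^2(\R)$ trivially, so only the Gagliardo seminorm needs to be controlled, and by Theorem \ref{thm:SobEmb} it suffices to treat the borderline case: since $0<s<\tfrac{1+\beta}{2}$ means $s - \tfrac12 < \tfrac{\beta}{2}$, it is enough to exhibit a pair $(s',r')$ with $s' - 1/r' = s - 1/2$ and $h_\beta \in W^{s',r'}$, but in fact here the cleaner route is direct: I would verify $h_\beta \in \dot{W}^{s,2}(\R)$ for every $s < \tfrac{1+\beta}{2}$ by bounding $\int_\R\int_\R |h_\beta(x)-h_\beta(y)|^2 |x-y|^{-1-2s}\,dy\,dx$.

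The key estimate is near $x_0 = 1/2$. Writing $\phi(t) = t^{\beta/2}$ for $t \in [0,1]$ (so $h_\beta(x) = \phi(1/2 - x)$ for $x \le 1/2$ near $x_0$), the function $\phi$ is $(\beta/2)$-Hölder continuous on $[0,1]$: indeed $|t^{\beta/2} - \tau^{\beta/2}| \le |t-\tau|^{\beta/2}$ for $t,\tau \ge 0$ since $0 < \beta/2 < 1$. Hence $h_\beta$ is globally $(\beta/2)$-Hölder on $\R$ with compact support. Then the standard computation gives, for any $s$,
\[
\int_{|x-y|\le 1} \frac{|h_\beta(x)-h_\beta(y)|^2}{|x-y|^{1+2s}}\,dy\,dx \lesssim \int_{\mathrm{supp}} \int_{|h|\le 1} \frac{|h|^{\beta}}{|h|^{1+2s}}\,dh\,dx,
\]
which is finite precisely when $\beta - 1 - 2s > -1$, i.e. $s < \beta/2$; but this only gives $s < \beta/2$, not $s < (1+\beta)/2$. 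The point I am missing is that the Hölder modulus is not the whole story: away from the support (the "long jumps" $|x-y|$ large) the integrand decays and contributes a finite amount because $h_\beta \in L^2$ with compact support, and — crucially — the Hölder bound is wasteful. The correct approach is to split $\R$ into the region near $\pm 1/2$ and the region where $h_\beta$ is $C^\infty$; on the smooth compactly supported piece one has $h_\beta \in W^{1,2}$, hence in $W^{s,2}$ for all $s \le 1$. Near $x_0 = 1/2$ one uses that for a function behaving like $\mathrm{dist}(x, x_0)^{\beta/2}$ cut off smoothly, the precise Slobodeckij regularity is $W^{s,2}$ for exactly $s < \beta/2 + 1/2$: this follows from the one-dimensional fact that $x \mapsto |x|^{a}$ localized to a bounded interval lies in $W^{s,r}_{\mathrm{loc}}$ iff $s < a + 1/r$ (here $a = \beta/2$, $r = 2$), which in turn follows either from a direct two-variable integral estimate or from Proposition \ref{prop:MultCounter}(iii)-type reasoning applied to the model function.

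So the main obstacle is getting the \emph{sharp} exponent $s < (1+\beta)/2$ rather than the lossy $s < \beta/2$ that a crude Hölder bound yields, and the resolution is to compute the Gagliardo integral for the model function $t \mapsto t_+^{\beta/2}$ honestly: localize to $x,y \in (0,\delta)$, substitute and use that $\int_0^\delta \int_0^\delta |t^{\beta/2} - \tau^{\beta/2}|^2 |t - \tau|^{-1-2s}\, d\tau\, dt < \infty$ iff $2s < 1 + \beta$, which one checks by the change of variables $\tau = t u$, reducing the inner integral to $t^{\beta - 2s}\int_0^{\delta/t}\cdots$ and tracking the resulting power $\beta - 2s + 1 > 0$. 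I would then patch the two singular points and the smooth region together using that the Slobodeckij seminorm of a sum is controlled by the seminorms of a smooth partition-of-unity decomposition (the cross terms are handled by the compact support and smoothness of the cutoffs), concluding $h_\beta \in W^{s,2}(\R)$ for all $s < (1+\beta)/2$.
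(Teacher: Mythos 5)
Your argument is correct, but it takes a genuinely different route from the paper. The paper works entirely on the Fourier side: using $W^{s,2}(\R)=H^s(\R)$, it computes $\widehat{h_\beta}$ in closed form, integrates by parts twice to obtain the decay $|\widehat{h_\beta}(\xi)|\lesssim |\xi|^{-1-\beta/2}$ for $|\xi|>1$, and reads off membership in $H^s$ for $s<\tfrac{1+\beta}{2}$ directly from $\int|\xi|^{2s}|\widehat{h_\beta}(\xi)|^2\,d\xi<\infty$. You instead work on the physical side with the Gagliardo seminorm, and your diagnosis is exactly right: the naive H\"older bound loses a factor of $|x-y|^{1/2}$ and only yields $s<\beta/2$, whereas the honest two--variable computation for the model function $t\mapsto t_+^{\beta/2}$ (substituting $\tau=tu$, getting an inner integral times $t^{\beta-2s}$, and requiring $\beta-2s>-1$) recovers the sharp threshold $s<\tfrac{1+\beta}{2}$; one should also include the cross region $\tau<\tfrac12<t$ where one of the two values vanishes, which gives the same condition, and note that the kink of $(1/2-|x|)^{\beta/2}$ at the origin is harmless since the function is Lipschitz there, so that piece lies in $W^{1,2}\subset W^{s,2}$ for all $s\le 1>\tfrac{1+\beta}{2}$. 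The patching is immediate from the triangle inequality for the seminorm once one knows that multiplication by a smooth compactly supported cutoff preserves $W^{s,2}$ for $0<s<1$. The trade--off: the paper's Fourier computation is shorter and self--contained for this particular explicit function, while your local Slobodeckij argument is more robust (it needs no closed--form transform and isolates the general fact that $t_+^{a}$ localized lies in $W^{s,r}$ exactly for $s<a+1/r$), at the cost of somewhat more bookkeeping.
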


\begin{proof}
 We will use the fact that as sets $W^{s,2}(\R)=H^s(\R)$. Let $f_{\beta}= \widehat{h_\beta}$.  A direct calculation shows
\begin{align}
	f_{\beta} (\xi)&= 2\int_{0}^{\frac{1}{2}} \cos(2\pi x \xi) (\frac{1}{2}-x)^{\beta/2} dx  = \xi^{-1-\beta/2}2^{-\beta/2}\int_{0}^{\xi} \cos(\pi y) (\xi-y)^{\beta/2} dy  \label{eqn:hello}
\end{align}
Now we show that $|\int_{0}^{\xi} \cos(\pi y) (\xi-y)^{\beta/2} dy |$ is bounded by a constant for all $\xi>1$.  First,
\begin{align}
	|\int_{\xi-1}^{\xi} \cos(\pi y) (\xi-y)^{\beta/2} dy|&\le 1 \label{eqn:IDontKnow}
\end{align}

Second, after two steps of integration by parts, we find,
\begin{align}
	\left| \int_{0}^{\xi-1} \cos(\pi y) (\xi-y)^{\beta/2} dy\right|
	\lesssim 1+\left|\int_0^{\xi-1} \cos(\pi y) (\xi-y)^{\beta/2-2}dy\right| \lesssim 1, \label{eqn:twentyseven}
\end{align}
where the implied constants only depend on $\beta$.
It follows from equations \eqref{eqn:hello}, \eqref{eqn:IDontKnow}, and  \eqref{eqn:twentyseven} that for any $|\xi|>1$, $|f_\beta(\xi)|^2 \lesssim |\xi|^{-2-\beta}$ and so
\begin{align*}
	\int_{\R} |\xi|^{2s} |\widehat{h_\beta}(\xi)|^2 d\xi=\int_{\R} |\xi|^{2s} |f_\beta(\xi)|^2 d\xi <\infty
\end{align*}
whenever $ s<\frac{1+\beta}{2}$.

\end{proof}

\section{Proofs for the multiplier results}\label{sec:Proofs}

\subsection{A proof for Theorem \ref{thm:SobolevResultSingleZero-2-2}}

Here we prove the following more general version of Theorem \ref{thm:SobolevResultSingleZero-2-2}.

\begin{theorem}[Nitzan, Northington, Powell]\label{thm:SobolevResultSingleZero-r-2}
 Let $1<r<\infty$ and $\frac{d}{r}<s \le d(1/2+1/r)$. Suppose $w \in W^{s,r}(\T^d)$ and $w$ has a zero.
\begin{itemize}
	\item[i.] If $s<\frac{d}{r}+1$, then $u=\frac{1}{w} \notin \mathcal{M}_2^q$ for any $q$ satisfying $2 \le q \le \frac{d}{d(1/2+1/r)-s}$.  Conversely, for any $0<s\le d$ and $q>\frac{d}{d(1/2+1/r)-s}$, there exists $w \in W^{s,r}(\T^d)$ such that $w$ has a zero and $u=\frac{1}{w}\in \mathcal{M}_2^q$.
	\item[ii.] If $s=\frac{d}{r}+1$, then $u=\frac{1}{w} \notin \mathcal{M}_2^q$ for any $q$ satisfying $2 \le q<\frac{2d}{d-2}$.
	\end{itemize}
\end{theorem}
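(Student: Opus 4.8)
The plan is to prove the "negative" directions (parts i and ii) by a localization-plus-testing argument, and to obtain the "converse" in part i from the explicit family $w_\beta$ of Proposition \ref{prop:MultCounter}. The converse is the easy half: given $q > \frac{d}{d(1/2+1/r)-s}$, a short computation shows that $s - d/r < \beta := d(1/2 + 1/r) - d/q$ (so Proposition \ref{prop:MultCounter}(iii) puts $w_\beta \in W^{s,r}(\T^d)$), while simultaneously $q > 2d/(d-2\beta)$, so Proposition \ref{prop:MultCounter}(ii) gives $u_\beta = 1/w_\beta \in \mathcal{M}_2^q$; and $w_\beta(0)=0$. One just has to check the two inequalities on $\beta$ are compatible, which is elementary algebra in $s, r, q, d$.

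For the main (negative) direction, suppose toward a contradiction that $w \in W^{s,r}(\T^d)$ has a zero at some $x_0$ and $u = 1/w \in \mathcal{M}_2^q$. Since $s > d/r$, by Theorems \ref{thm:SobEmb} and \ref{thm:FracHolderEmb} $w$ has a Hölder-continuous representative, and by \eqref{eqn:holderlocalization} we get the localized bound $\|w\|_{L^r(B_\tau(x_0))} \lesssim \tau^{s}\|w\|_{\dot W^{s,r}(B_{2\tau}(x_0))}$; the right-hand factor is $o(1)$ as $\tau \to 0$ because $w \in W^{s,r}$. The strategy is then to feed a well-chosen test sequence into the multiplier inequality $\|T_u a\|_{\ell^q} \le \|u\|_{\mathcal{M}_2^q}\|a\|_{\ell^2}$. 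The natural test object is (a Fourier-truncated/smoothed version of) a bump concentrated near $x_0$: if $\varphi_N$ is supported near $x_0$ at scale $1/N$ and one takes $a = \widehat{w\varphi_N}$ (so that, roughly, $T_u a = \widehat{\varphi_N}$ modulo truncation), then $\|a\|_{\ell^2} = \|w\varphi_N\|_{L^2}$ is controlled by the localization estimate above — small — while $\|T_u a\|_{\ell^q}$ picks up the full $\ell^q$-mass of $\widehat{\varphi_N}$, which is large by Hausdorff–Young-type scaling. Balancing the $\tau = 1/N$ powers: $\|w\varphi_N\|_{L^2} \lesssim N^{-s}\cdot(\text{normalization})\cdot o(1)$ against $\|\widehat{\varphi_N}\|_{\ell^q} \gtrsim N^{\text{something}}$, and the exponents match exactly at the endpoint $q = \frac{d}{d(1/2+1/r)-s}$ in part i (and give a divergent ratio for smaller $q$), forcing $o(1) \gtrsim 1$, a contradiction. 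Case ii ($s = d/r + 1$) is the borderline where $\alpha = 1$ and the Hölder embedding just fails; here one instead uses that $w \in W^{1,r}$ with $w(x_0)=0$ gives a Poincaré/Sobolev estimate $\|w\|_{L^{r^*}(B_\tau)} \lesssim \tau \|\nabla w\|_{L^r(B_\tau)}$ with $r^* = dr/(d-r)$, and runs the same testing argument with the improved integrability exponent, which is what produces the endpoint $q < 2d/(d-2)$.

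The main obstacle I expect is making the "$T_u a \approx \widehat{\varphi_N}$" step rigorous: $T_u$ is defined on all of $\ell^2(\Z^d)$ via Fourier series, so $u \cdot \mathcal{F}^{-1}a$ need not literally equal $\varphi_N$ unless $w\varphi_N$ has the right Fourier support, and one must handle the interplay between multiplication by $w$ on the torus and the $\ell^2 \to \ell^q$ mapping cleanly — probably by working with $a$ of finite support (trigonometric polynomials) and passing to a limit, or by splitting $\widehat{\varphi_N}$ into a low-frequency part (where the identity is exact) and a tail (estimated by the Sobolev norm of $\varphi_N$ and smallness of $w$ near $x_0$). A secondary technical point is the precise choice of $\varphi_N$ — it should be normalized so that $\|\widehat{\varphi_N}\|_{\ell^q}$ and $\|w\varphi_N\|_{L^2}$ both have clean $N$-power behavior, which suggests taking $\varphi_N(x) = N^{d}\psi(N(x-x_0))$ for a fixed Schwartz bump $\psi$ and tracking scaling carefully, keeping the $o(1)$ factor from $\|w\|_{\dot W^{s,r}(B_{2/N}(x_0))}$ visible throughout so that it is this factor, rather than a fixed constant, that yields the contradiction at the endpoint.
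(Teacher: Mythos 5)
Your strategy for the main negative direction of part (i) is essentially the paper's: the multiplier hypothesis is reformulated as $\|\widehat g\|_{\ell^q(\Z^d)} \le C\|wg\|_{L^2(\T^d)}$ (take $a=\mathcal{F}(wg)$, so that $u\,\mathcal{F}^{-1}a=g$ exactly), one tests with the indicator $g=\chi_{I_\tau(x_0)}$ --- no smoothing or Fourier truncation is needed, which disposes of the ``main obstacle'' you anticipate --- and the contradiction comes, exactly as you say, from $\|w\|_{L^2(I_\tau)}\lesssim \tau^{\alpha+d/2}\|w\|_{\dot{W}^{s,r}(B_{2\sqrt{d}\tau}(x_0))}$ with the local seminorm tending to $0$, against $\|\widehat{\chi_{I_\tau}}\|_{\ell^q}\gtrsim \tau^{d(1-1/q)}$ and the inequality $d(1/2-1/q)\le \alpha=s-d/r$ forced by the hypothesis on $q$. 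So part (i), including the use of Proposition \ref{prop:MultCounter} for sharpness, is on target.

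Two concrete problems remain. First, in the converse your choice $\beta=d(1/2+1/r)-d/q$ does not work: Proposition \ref{prop:MultCounter}(ii) requires $q>2d/(d-2\beta)$, equivalently $\beta<d(1/2-1/q)$, and your $\beta$ exceeds that threshold by exactly $d/r$. What the hypothesis $q>d/(d(1/2+1/r)-s)$ actually yields is $s-d/r<d(1/2-1/q)$, so one should take $\beta$ in the nonempty interval $\left(s-d/r,\ d(1/2-1/q)\right)$, e.g.\ $\beta=s-d/r+\epsilon$ as in the paper. Second, your treatment of part (ii) is both unnecessary and unsound as written: the inequality $\|w\|_{L^{r^*}(B_\tau)}\lesssim \tau\|\nabla w\|_{L^r(B_\tau)}$ is dimensionally wrong (the Sobolev--Poincar\'e inequality at the critical exponent $r^*=dr/(d-r)$ is scale invariant and carries no factor of $\tau$), and in any case replacing the mean-zero normalization by ``$w$ vanishes at a single point'' is not legitimate for $W^{1,r}$ functions without a continuity argument. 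The paper's route for (ii) is a one-line reduction to (i): $w\in W^{d/r+1,r}(\T^d)$ lies in $W^{s,r}(\T^d)$ for every $s<d/r+1$, and letting $s\uparrow d/r+1$ in part (i) sweeps out all $q<2d/(d-2)$.
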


\begin{proof}
To prove the implication in part (i), suppose for contradiction that $u=\frac{1}{w} \in \mathcal{M}_2^q$ for a function $w$ satisfying the condition of the theorem and $q \le \frac{d}{d(1/2+1/r)-s}$. In particular, we will assume without loss of generality that $w(0)=0$.

Let $\alpha=s- \frac{d}{r}$ be the quantity discussed in Section \ref{alphasection}, and note that the condition on $q$ and $s$ implies that,
\begin{equation}\label{alphaisbig}
d(\frac{1}{2}-\frac{1}{q})\leq \alpha <1.
\end{equation}

 The $(2,q)$--multiplier condition on $u=\frac{1}{w}$ can be rewritten as follows: For any function $g$ over $\T^d$ which satisfies $wg\in L^2(\T^d)$ we have:
\begin{equation} \label{eqn:BFI}\|\widehat{g}\|_{\ell^q(\Z^d)} \le C \|wg\|_{L^2(\T^d)}.\end{equation}

For $\tau>0$, denote $I_\tau=I_\tau(0)$.  Substituting $g=\chi_{I_\tau}$ into \eqref{eqn:BFI} and using standard bounds on the $\ell^q$ norm of its Fourier transform, we have
\begin{equation}\label{eqn:local} \tau^{d(1-1/q)}\lesssim \|\widehat{\chi_{I_\tau}}\|_{\ell^q(\Z^d)} \lesssim \|w\|_{L^2(I_\tau)},  \end{equation}
with constants not depending on $\tau$.

To estimate $ \|w\|_{L^2(I_\tau)}$, we use the fact that $w(0)=0$,  \eqref{holder-embed-thm-eq}, and (\ref{alphaisbig}). We get,
\begin{align}
\int_{I_\tau} |w(x)|^2 dx \lesssim \|w\|^2_{\dot{W}^{s,r}(B_{2\sqrt{d}\tau})}|I_{\tau}| \tau^{2\alpha}\lesssim \|w\|^2_{\dot{W}^{s,r}(B_{2\sqrt{d}\tau})}\tau^{2d(1-1/q)}.
\end{align}
Combining this estimate with \eqref{eqn:local}, we find that
\begin{equation}
	1 \lesssim  \|w\|_{\dot{W}^{s,r}(B_{2\sqrt{d}\tau})}, \end{equation}
which is absurd as $\|w\|_{\dot{W}^{s,r}(B_{2\sqrt{d}\tau})}$ tends to zero when $\tau$ tends to zero.

To prove part (ii) we recall that if $w \in W^{d/r+1,r}(\T^d)$, then $w\in W^{s,r}$ for all $s<d/r+1$.  Thus, we can apply part (i) to find that $u \notin \mathcal{M}_2^q$ for any $q<\frac{d}{d(1/2+1/r)-(d/r+1)}=\frac{2d}{d-2} $.

The sharpness in part (i) follows from Proposition \ref{prop:MultCounter}.  Indeed, for $\epsilon>0$ let $\beta=s-d/r+\epsilon$ and $w_\beta$ be as in \ref{prop:MultCounter}.  Then we have $w_\beta \in W^{s,r}(\T^d)$, $w_\beta(0)=0$, and $u_\beta=1/w_\beta \in\mathcal{M}_2^q$ for $q>d/(d(1/2+1/r)-s-\epsilon)$. This completes the proof.
\end{proof}

\begin{remark}
Note that due to the local nature of the proof above, Theorem \ref{thm:SobolevResultSingleZero-r-2} holds also when the condition $w\in W^{s,r}(\T^d)$ is replaced by the condition $w\in W^{s,r}(B)$ for some ball $B \subset \T^d$ containing a zero of $w$. Similar local versions hold for all the results regarding multipliers appearing in the paper.
\end{remark}

\subsection{A proof for Theorem \ref{thm:ScalarValuedLargeZeroResult}}

Here we prove the following more general version of Theorem \ref{thm:ScalarValuedLargeZeroResult}, part (ii) of which follows essentially from the main results in \cite{JL,SC}.

\begin{theorem}[Nitzan, Northington, Powell]\label{thm:ScalarValuedLargeZeroGeneralResult}
 Let $2\le r<\infty$, $0< \sigma < d$, and \[d/r-\sigma/\max(r,d-\sigma)< s\le  (d-\sigma)(1/2+1/r).\] Suppose $w \in W^{s,r}(\T^d)$ and $\mathcal{H}^\sigma(\Sigma(w))>0$
\begin{itemize}
	\item[i.] If $ s < d/r+1 $, then $u=\frac{1}{w} \notin \mathcal{M}_2^q$ for any $q$ satisfying \[2 \le q \le \frac{d}{d(1/2+1/r)-\sigma/r-s}.\]
	\item[ii.] If $s=(d-\sigma)(1/2+1/r)<d/r+1$, then $u=\frac{1}{w} \notin L^2(\T^d)$, and thus $u=\frac{1}{w} \notin \mathcal{M}_2^q$ for any $q$.
	\item[iii.] If $s=d/r+1 \le (d-\sigma)(1/2+1/r)$, then $u=\frac{1}{w}\notin \mathcal{M}_2^q$ for and $q$ satisfying $2 \le q < \frac{d}{d/2-\sigma/r-1}$.
	\end{itemize}
\end{theorem}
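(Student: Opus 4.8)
\emph{Proof proposal.} The plan is to run the argument of Theorem~\ref{thm:SobolevResultSingleZero-r-2}, but to replace the single test cube centred at a zero of $w$ together with the pointwise estimate \eqref{eqn:holderlocalization} by the whole family of cubes supplied by the Poincar\'e--type inequality of Theorem~\ref{thm:PI} (for $s>1$, of its extension Remark~\ref{rem:PoincareIneq-s}). A first, routine, point is that the hypothesis $d/r-\sigma/\max(r,d-\sigma)<s$ is exactly what makes the relevant version of Theorem~\ref{thm:PI} applicable: since $\sigma/\max(r,d-\sigma)=\min(\sigma/r,\sigma/(d-\sigma))$, it is equivalent to the conjunction of $(d-\sigma)/r<s$ (needed when $s\le1$) and $d/r-\sigma/(d-\sigma)<s$ (needed when $s>1$, together with $s<d/r+1$, which holds in parts (i)--(ii)). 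Throughout, fix a small $\epsilon>0$ and apply Theorem~\ref{thm:PI} to obtain the closed set $T\subset\Sigma(w)$ with $\mathcal H^\sigma(T)>0$ and the pairwise disjoint balls $B_k=B_{\tau_k}(x_k)$, $x_k\in T$, satisfying (i)--(iii) there; since $\sum_k|B_k|<\epsilon$ forces $\max_k\tau_k\to0$ as $\epsilon\to0$, condition (i) yields $\sum_k\tau_k^\sigma\gtrsim\mathcal H^\sigma_{\delta_0}(T)>0$ once $\epsilon$ is small (choosing $\delta_0$ with $\mathcal H^\sigma_{\delta_0}(T)>0$, possible as $\mathcal H^\sigma(T)>0$).

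\emph{Part (i).} Assume for contradiction $u=1/w\in\mathcal M_2^q$ with $q$ in the stated range, rewritten as \eqref{eqn:BFI}. Testing \eqref{eqn:BFI} on $g=\chi_{Q_k}$ for a cube $Q_k\subset B_k$ of side comparable to $\tau_k$, and using the estimate from \eqref{eqn:local}, H\"older's inequality (here $r\ge2$), and Theorem~\ref{thm:PI}(iii):
\[
\tau_k^{d(1-1/q)}\ \lesssim\ \|w\|_{L^2(B_k)}\ \lesssim\ \tau_k^{d(1/2-1/r)}\|w\|_{L^r(B_k)}\ \lesssim\ \tau_k^{\,d(1/2-1/r)+s}\,\|w\|_{\dot W^{s,r}(B_k)}.
\]
Hence $\|w\|_{\dot W^{s,r}(B_k)}\gtrsim\tau_k^{\beta}$ with $\beta=d(1/2+1/r)-d/q-s$, and a direct computation shows the hypothesis $q\le d/(d(1/2+1/r)-\sigma/r-s)$ is precisely the inequality $r\beta\le\sigma$, so (using $\tau_k\le1$) $\|w\|^r_{\dot W^{s,r}(B_k)}\gtrsim\tau_k^{r\beta}\ge\tau_k^\sigma$. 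Summing over the disjoint $B_k$,
\[
\|w\|^r_{\dot W^{s,r}\big(\bigcup_kB_k\big)}\ \ge\ \sum_k\|w\|^r_{\dot W^{s,r}(B_k)}\ \gtrsim\ \sum_k\tau_k^\sigma\ \gtrsim\ 1,
\]
which contradicts $\|w\|^r_{\dot W^{s,r}(E)}\to0$ as $|E|\to0$ (absolute continuity of the Lebesgue integral defining the Gagliardo seminorm, or of those of $D^\alpha w$ when $s>1$) since $|\bigcup_kB_k|<\epsilon$. For part (iii), I would note $w\in W^{d/r+1,r}(\T^d)\subset W^{s',r}(\T^d)$ for every $d/r<s'<d/r+1$ by Theorem~\ref{thm:SobEmb}; for $s'$ close enough to $d/r+1$ this $s'$ meets the hypotheses of part (i), which then gives $u\notin\mathcal M_2^q$ for $q\le d/(d(1/2+1/r)-\sigma/r-s')$, and letting $s'\uparrow d/r+1$ exhausts all $q<d/(d/2-\sigma/r-1)$.

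\emph{Part (ii).} Suppose $u=1/w\in L^2(\T^d)$ with $\|1/w\|_{L^2}^2=M<\infty$. Using the same balls, two successive Cauchy--Schwarz inequalities, $|B_k|^2\le\big(\int_{B_k}|w|\big)\big(\int_{B_k}|w|^{-1}\big)$ and $\big(\int_{B_k}|w|^{-1}\big)^2\le|B_k|\int_{B_k}|w|^{-2}$, followed by H\"older's inequality and Theorem~\ref{thm:PI}(iii), give
\[
\int_{B_k}\frac{1}{|w|^2}\ \ge\ \frac{|B_k|^{1+2/r}}{\|w\|_{L^r(B_k)}^2}\ \gtrsim\ \frac{\tau_k^{\,d(1+2/r)-2s}}{\|w\|_{\dot W^{s,r}(B_k)}^2}\ =\ \frac{\tau_k^{\,\sigma(1+2/r)}}{\|w\|_{\dot W^{s,r}(B_k)}^2},
\]
the last equality using $s=(d-\sigma)(1/2+1/r)$ (if $\|w\|_{\dot W^{s,r}(B_k)}=0$ then $w\equiv0$ on $B_k$ and $1/w\notin L^2$ at once). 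H\"older's inequality for series with the conjugate exponents $\tfrac{r+2}{r}$ and $\tfrac{r+2}{2}$ then yields
\[
\sum_k\tau_k^\sigma\ \le\ \Big(\sum_k\frac{\tau_k^{\sigma(1+2/r)}}{\|w\|_{\dot W^{s,r}(B_k)}^2}\Big)^{\!\frac{r}{r+2}}\Big(\sum_k\|w\|^r_{\dot W^{s,r}(B_k)}\Big)^{\!\frac{2}{r+2}}\ \lesssim\ M^{\frac{r}{r+2}}\,\|w\|^{\frac{2r}{r+2}}_{\dot W^{s,r}(\bigcup_kB_k)},
\]
and since the left side is $\gtrsim1$ while the right side tends to $0$ as $\epsilon\to0$, we reach a contradiction; hence $1/w\notin L^2(\T^d)$, and a fortiori $1/w\notin\mathcal M_2^q\subset L^2(\T^d)$ for every $q$.

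\emph{Main obstacle.} I expect the delicate point to be the exponent accounting. In part (ii) one must iterate Cauchy--Schwarz exactly the right number of times to produce the power $\tau_k^{\sigma(1+2/r)}$, which alone pairs cleanly (via H\"older with the conjugate pair $\tfrac{r+2}{r},\tfrac{r+2}{2}$) with the two quantities under control, $\sum_k\int_{B_k}|w|^{-2}\le M$ and $\sum_k\|w\|^r_{\dot W^{s,r}(B_k)}\lesssim\|w\|^r_{\dot W^{s,r}(\T^d)}$, the latter small because $|\bigcup_kB_k|<\epsilon$. One must also carefully match the hypothesis on $s$ to the two regimes $s\le1$ and $s>1$ (and to $r\le d-\sigma$ versus $r\ge d-\sigma$) so as to invoke Theorem~\ref{thm:PI} in the correct form, and justify the Hausdorff-measure lower bound $\sum_k\tau_k^\sigma\gtrsim1$ once the radii have been forced small.
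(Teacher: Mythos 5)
Your argument for parts (i) and (iii) is correct and coincides with the paper's proof: the same test functions $\chi_{Q_k}$ on the balls from Theorem~\ref{thm:PI}, the same H\"older step using $r\ge 2$, the same rearrangement showing that the hypothesis on $q$ is exactly $r\beta\le\sigma$, and the same limiting argument $s'\uparrow d/r+1$ for part (iii); your bookkeeping for when Theorem~\ref{thm:PI} versus Remark~\ref{rem:PoincareIneq-s} applies (via $\sigma/\max(r,d-\sigma)=\min(\sigma/r,\sigma/(d-\sigma))$) matches the paper's intent. The one genuine divergence is part (ii): the paper does not prove it but simply invokes the main results of \cite{JL} and \cite{SC} (that $1/w\in L^2$ and $w\in W^{s,r}$ with $s=(d-\sigma)(1/2+1/r)$ force $\mathcal H^\sigma(\Sigma(w))=0$), extending to $s>1$ via Remark~\ref{rem:PoincareIneq-s}. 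You instead give a self-contained derivation from the same ball decomposition, via the two Cauchy--Schwarz steps $|B_k|^2\le(\int|w|)(\int|w|^{-1})$ and $(\int|w|^{-1})^2\le|B_k|\int|w|^{-2}$, followed by H\"older for series with exponents $\tfrac{r+2}{r},\tfrac{r+2}{2}$; I checked the exponent accounting ($d(1+2/r)-2s=\sigma(1+2/r)$ and the cancellation of the seminorm factors) and it is correct. What your route buys is independence from the precise statements in \cite{JL,SC} and a uniform treatment of all three parts by one mechanism; what the paper's citation buys is brevity and, implicitly, coverage of the full range of the Jiang--Lin--Schikorra integrability result rather than only the $L^2$ endpoint needed here. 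Either way the proof stands.
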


\begin{proof}

We first note that part (ii) of this theorem follows from the main result in \cite{JL} and \cite{SC}. Indeed, in the case that $s=(d-\sigma)(\frac{1}{2}+\frac{1}{r}) \le 1$, it is stated there that if $u=1/w\in L^2(\T^d)$ and $w \in W^{s,r}(\T^d)$ then $\mathcal{H}^{\sigma}(\Sigma(w))=0$. Since for every value of $2\leq q$ we have $\mathcal{M}_2^q \subset \mathcal{M}_2^{\infty}= L^2(\T^d)$, part (ii) follows.  Now, for the case $s>1$, we point out that an application of Theorem \ref{thm:PI}, as in the proof of Remark \ref{rem:PoincareIneq-s}, allows one to easily extended the main result of \cite{JL} and \cite{SC} to the claimed range of parameters.

To prove part (i), suppose for a contradiction that $u=\frac{1}{w} \in \mathcal{M}_2^q$ for a function $w$ satisfying the conditions of the theorem. Note that the condition on $q$ implies that
\begin{equation}\label{ohdear}
\sigma \ge r\left[d(\frac{1}{2}+\frac{1}{r}-\frac{1}{q})-s\right].
\end{equation}
Since $d/r-\sigma/\max(r,d-\sigma)<s<d/r+1$, we can apply Theorem \ref{thm:PI}  and the remark following it, to obtain a set $T$ with $\mathcal{H}^\sigma(T)>0$, and a constant $C$, which satisfy the conditions of the theorem.

For $\epsilon>0$ let $\mathcal{B}^{(\epsilon)}=\{B_k\}_{k=1}^\infty= \{B_{\tau_k}(x_k)\}_{k=1}^\infty$ be the collection of disjoint balls guaranteed by Theorem \ref{thm:PI} and denote $V^{(\epsilon)}= \bigcup_{k=1}^\infty B_k$ (we suppress the superscript "$\epsilon$" from the balls to avoid cumbersome notations).  For ${B_k=B_{\tau_k}(x_k)} \in \mathcal{B}$, let $I_k = I_{\tau_k/\sqrt{d}}(x_k)$ so that $I_k \subset B_k$. As in the proof of Theorem \ref{thm:SobolevResultSingleZero-r-2}, the Fourier multiplier property implies that  for each such ball $B_k$ we have,
\[
	\tau_k^{d(1-\frac{1}{q})} \lesssim \| \widehat{ \chi_{I_{k}}} \|_{\ell^q(\Z^d)}
		\lesssim \|w\|_{L^{2}(I_k)}\lesssim\|w\|_{L^{2}(B_k)}.
\]
H\"olders inequality and part (iii) of Theorem \ref{thm:PI} now imply that
\[
	\tau_k^{d(1-\frac{1}{q})} \lesssim \tau_k^{\frac{d(r-2)}{2r}} \|w\|_{L^r(B_k)}
		\lesssim \tau_k^{s+\frac{d(r-2)}{2r}} \|w\|_{\dot{W}^{s,r}(B_k)}.
\]

Rearranging this inequality and plugging it in (\ref{ohdear}), we find that
\begin{equation}\label{eqn:local2}
\tau_k^\sigma\le \tau_k^{r\left[d(\frac{1}{2}+\frac{1}{r}-\frac{1}{q})-s\right]} \lesssim \|w\|^r_{\dot{W}^{s,r}(B_k)}.
\end{equation}
It follows that,
\[\sum_k (\tau_{k}^{(\epsilon)})^\sigma\lesssim \sum_k\|w\|^r_{\dot{W}^{s,r}(B_k)}= \|w\|_{\dot{W}^{s,r}(V^{(\epsilon)})}^r,\]
and so, due to part (ii) of  Theorem \ref{thm:PI}
\[ \lim_{\epsilon \rightarrow 0} \sum_k (\tau^{(\epsilon)}_{k})^\sigma = 0.\]
This, combined with part (i) of  Theorem \ref{thm:PI}, contradicts the fact that $\mathcal{H}^\sigma(T)>0$. Part (i) follows.

For part (iii), since $w \in W^{d/r+1,r}(\T^d)$, we also have $w\in W^{s,r}$ for all $s<d/r+1$.  Part (i) shows that $u \notin \mathcal{M}_2^q$ for any $q<\frac{d}{d(1/2+1/r)-\sigma/r-(d/r+1)}=\frac{d}{d/2-\sigma/r-1} $.
\end{proof}

\subsection{The anisotropic case}

Next, we formulate and prove a version of our results for functions in the anisotropic Bessel potential spaces $H^{\vec{s}}(\T^d)$, which were defined in Section \ref{subsection-mixed}. Given a $d$--tuple, $\{s_j\}_{j=1}^d$, we denote
$\ell(\vec{s})=\sum_{j=1}^d\frac{1}{s_j}$ and recall the notation $\alpha_j=s_j(1-\frac{1}{2}\ell(\vec{s}))$.

\begin{theorem}[Nitzan, Northington, Powell]\label{thm:nonsymMult}
Let $\{s_j\}_{j=1}^d$ be such that $0<s_j$ and $0<\alpha_j<1$ for every $j=1,2,...,d$.  Suppose $w \in H^{\vec{s}}(\T^d)$  and $w$ has a zero, then  $u=\frac{1}{w} \notin \mathcal{M}_2^q$ for any $q$ satisfying $2\leq q\leq \ell(\vec{s})/(\ell(\vec{s})-1)$.
\end{theorem}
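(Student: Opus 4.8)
The plan is to adapt the local argument from the proof of Theorem \ref{thm:SobolevResultSingleZero-r-2}, replacing the isotropic H\"older estimate of Theorem \ref{thm:FracHolderEmb} with the anisotropic one of Theorem \ref{thm:nonsymSobolevEmb}. First I would argue by contradiction: suppose $u = 1/w \in \mathcal{M}_2^q$ for some $2 \le q \le \ell(\vec{s})/(\ell(\vec{s})-1)$, and without loss of generality assume $w(0) = 0$ (by Lemma \ref{lem:nonsymCont} the condition $\ell(\vec{s}) < 2$ — which follows from $\alpha_j > 0$ — forces $\widehat{w} \in \ell^1$, so $w$ is continuous and ``has a zero'' makes sense). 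The multiplier condition, as in \eqref{eqn:BFI}, says $\|\widehat{g}\|_{\ell^q} \le C \|wg\|_{L^2}$ for all admissible $g$. Rather than testing against the cube $I_\tau(0)$, the right choice here is a \emph{rectangular} box adapted to the anisotropy: for $\tau > 0$ set $R_\tau = \prod_{j=1}^d [-\tau^{1/s_j}, \tau^{1/s_j}]$ (or a rescaled version), so that $\|\chi_{R_\tau}\|_{L^2}^2 = |R_\tau| \asymp \tau^{\ell(\vec{s})}$ and $\|\widehat{\chi_{R_\tau}}\|_{\ell^q}^q \gtrsim |R_\tau|^q$, whence $\|\widehat{\chi_{R_\tau}}\|_{\ell^q} \gtrsim |R_\tau|^{1-1/q} \asymp \tau^{\ell(\vec{s})(1-1/q)}$.

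The next step is to bound $\|w\|_{L^2(R_\tau)}$ from above using that $w(0) = 0$ and the anisotropic continuity. Telescoping along the $d$ coordinate directions one at a time and applying Theorem \ref{thm:nonsymSobolevEmb} in each direction, one gets, for $x \in R_\tau$, a pointwise bound of the form $|w(x)| \lesssim \sum_\ell R_\ell(|x^{(\ell)}|)\,|x^{(\ell)}|^{\alpha_\ell} \lesssim o(1)\,\tau$, using that on $R_\tau$ one has $|x^{(\ell)}|^{\alpha_\ell} \le (\tau^{1/s_\ell})^{\alpha_\ell} = \tau^{\alpha_\ell/s_\ell} = \tau^{1 - \ell(\vec{s})/2}$ — wait, one must be careful with the exponent bookkeeping here, which is the crux of the matter. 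The clean way: $\alpha_\ell / s_\ell = 1 - \tfrac{1}{2}\ell(\vec{s})$ is independent of $\ell$, so each term contributes the same power $\tau^{1-\ell(\vec{s})/2}$, and hence $\|w\|_{L^2(R_\tau)}^2 \lesssim o(1)\, \tau^{2(1-\ell(\vec{s})/2)} |R_\tau| = o(1)\,\tau^{2 - \ell(\vec{s}) + \ell(\vec{s})} = o(1)\,\tau^2$, so $\|w\|_{L^2(R_\tau)} \lesssim o(1)\,\tau$. Hmm — this gives $\tau^{\ell(\vec{s})(1-1/q)} \lesssim o(1)\,\tau$, i.e. $\tau^{\ell(\vec{s})(1-1/q) - 1} \lesssim o(1)$; for this to be absurd as $\tau \to 0$ we need $\ell(\vec{s})(1 - 1/q) \le 1$, equivalently $1 - 1/q \le 1/\ell(\vec{s})$, equivalently $q \le \ell(\vec{s})/(\ell(\vec{s})-1)$ — exactly the hypothesis. (When the inequality on $q$ is strict the left side $\to 0$, contradicting the constant lower bound; at equality one instead keeps the $o(1)$ factor from Theorem \ref{thm:nonsymSobolevEmb}, which still tends to $0$, giving the contradiction.)

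The main obstacle I anticipate is making the anisotropic telescoping estimate fully rigorous: Theorem \ref{thm:nonsymSobolevEmb} as stated compares values of $G$ at points differing in a single coordinate, with an $o(1)$ prefactor $R(\cdot)$ depending on the $H^{\vec{s}}$-norm of the \emph{whole} function; to chain these $d$ one-variable estimates I need the intermediate points to stay in a fixed compact region and need the prefactors to remain uniformly $o(1)$, which is fine since $R$ is a fixed function with $R(\tau) \to 0$. A secondary point is verifying the lower bound $\|\widehat{\chi_{R_\tau}}\|_{\ell^q} \gtrsim |R_\tau|^{1-1/q}$ with a constant uniform in the (very eccentric) box $R_\tau$; this follows from $|\widehat{\chi_{R_\tau}}(k)| \asymp |R_\tau|$ on a bounded set of frequencies $k$ (those with $|k_j| \lesssim \tau^{-1/s_j}$), so the $\ell^q$ norm over that set alone is $\gtrsim |R_\tau| \cdot (\text{number of such } k)^{1/q} \gtrsim |R_\tau| \cdot |R_\tau|^{-1/q}$. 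Once these two estimates are in hand, the contradiction is immediate and the proof closes. Finally one should remark, as elsewhere in the paper, that the argument is purely local, so it suffices to have $w \in H^{\vec{s}}$ near one of its zeros.
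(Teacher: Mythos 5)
Your proposal is correct and follows essentially the same route as the paper's proof: argue by contradiction, test the multiplier inequality against the indicator of an anisotropic box $\prod_j[-\tau^{s_1/s_j},\tau^{s_1/s_j}]$ (your $R_\tau$ is the same family up to the reparametrization $\tau\mapsto\tau^{s_1}$), bound $\|w\|_{L^2}$ on that box via the telescoped anisotropic H\"older estimate of Theorem \ref{thm:nonsymSobolevEmb}, and use the $o(1)$ factor $R(\cdot)$ to reach a contradiction even at the endpoint $q=\ell(\vec{s})/(\ell(\vec{s})-1)$. The exponent bookkeeping ($\alpha_\ell/s_\ell$ independent of $\ell$) and the lower bound on $\|\widehat{\chi_{R_\tau}}\|_{\ell^q}$ are exactly as in the paper, so no gap remains.
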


\begin{proof}[Proof of Theorem \ref{thm:nonsymMult}]
Suppose for a contradiction that $u=\frac{1}{w} \in \mathcal{M}_2^q$ for some function $w$ satisfying
the conditions of the theorem.  Without loss of generality, we may assume that $w(0)=0$, and that $s_{\text{min}}=s_1\le s_2\le \cdots \le s_d$.

Since $0<\alpha_j<1$ for every $j=1,2,...,d$ we have, in particular, that $\ell(\vec{s})<2$ and therefore, by Lemma \ref{lem:nonsymCont}, that $w$ is continuous.
 Moreover, by Theorem \ref{thm:nonsymSobolevEmb}, we have
\begin{equation}\label{mixed holder}
	|w(x)|=|w(x)-w(0)|\le R(|x|) \Sigma_{j=1}^d|x_j|^{\alpha_j},
\end{equation}
where $\lim_{\tau \rightarrow 0} R(\tau) = 0$.

For $\tau>0$, let $I_{\tau,\vec{s}}$ be the rectangle defined by $I_{\tau,\vec{s}}=\Pi_{j=1}^d[-\tau_j/2,\tau_j/2]$, where $\tau_j= \tau^{s_1/s_j}$.  The inequality in (\ref{mixed holder}) implies that
\[
	\int_{I_{\tau,\vec{s}}} |w(x)|^2 dx \le \max_{x \in I_{\tau,\vec{s}}} (R(|x|))^2 \sum_{j=1}^d \int_{I_{\tau,\vec{s}}} |x_j|^{2\alpha_j} dx.
\]
Since,
\[
 \sum_{j=1}^d \int_{I_{\tau,\vec{s}}} |x_j|^{2\alpha_j} dx\le \sum_{j=1}^d \tau_j^{2\alpha_j} \prod_{k=1}^d \tau_k=\sum_{j=1}^d \tau^{\frac{2s_1\alpha_j}{s_j}} \tau^{s_1 \ell{(\vec{s})}}=d\tau^{2s_1},
\]
we conclude that
\[
\int_{I_{\tau,\vec{s}}} |w(x)|^2 dx \le d\tau^{2s_1}\max_{x \in I_{\tau,\vec{s}}} (R(|x|))^2.
\]
On the other hand, we have
\begin{align*}
	\tau^{s_1(1-1/q)\ell{(\vec{s})}}= \prod_{j=1}^d \tau^{s_1(1-1/q)/s_j}= \prod_{j=1}^d \tau_j^{1-1/q}\lesssim\|\mathcal{F}(\chi_{I_{\tau,\vec{s}}}) \|_{\ell^q(\Z^d)}.
\end{align*}
Since $u=1/w \in \mathcal{M}_2^q$ we therefore have, for any $\tau>0$,
\begin{align*}
	\tau^{s_1(1-1/q) \ell{(\vec{s})}} \lesssim \|\mathcal{F}(\chi_{I_{\tau,\vec{s}}})\|_{\ell^q(\Z^d)} \lesssim \|w\|_{L^2(I_{\tau,\vec{s}})} \lesssim \tau^{s_1} \max_{x \in I_{\tau,\vec{s}}} R(|x|).
\end{align*}
However, the condition on $q$ implies that $(1-1/q)\ell{(\vec{s})} \le 1$, and $\max_{x \in I_{\tau,\vec{s}}}R(|x|)$ tends to zero as $\tau$ tends to zero, so this bound cannot hold.
\end{proof}

\begin{remark}\label{localnonsym}
Note that due to the local nature of the proof above, Theorem \ref{thm:nonsymMult} holds if the condition $w \in H^{\vec{s}}(\T^d)$ is replaced by the condition that $w$ has a zero at $0$, and (\ref{mixed holder}) holds for all $x$ in a neighborhood of $0$.
\end{remark}

\section{Further extensions of the multiplier results}\label{sec:MatpqMult}

In this section we discuss some further extensions of our multiplier results. First, we extend these results to matrix valued multipliers, and then we remark on the extension of these results to $(p,q)$--multipliers by means of interpolation.

\subsection{Matrix valued multipliers}

Recall that for $K\in\N$ and $2\leq q\leq\infty$ a matrix valued function $U\in [L^2(\T^d)]^{K\times K}$ is a \textit{matrix valued $(2,q)$--multiplier} if the operator $T_U: [\ell^2(\Z^d)]^K\rightarrow [\ell^q(\Z^d)]^K$, defined by
\[
T_U A= \mathcal{F}_K(U\mathcal{F}^{-1}_K{A}),
\]
is bounded, and that the family of all such matrix valued multipliers is denoted by $\mathfrak{M}_2^q(K)$. Further, recall that for a Hermitian matrix valued function $U$ we write $U=V^* \Lambda V$ where the entries of $V$ and $\Lambda$ are measurable functions, $V$ is unitary, and $\Lambda$ is a diagonal matrix with diagonal entries satisfying $\lambda_1(x)\ge \lambda_2(x) \ge \cdots \ge \lambda_K(x)$ for almost every $x\in \T^d$.

We now turn to a proof of Theorem \ref{thm:KMultEig}, which asserts that we can frequently consider scalar valued multipliers instead of matrix valued multipliers.
\begin{proof}
First, assume that $U\in \mathfrak{M}_2^q(K)$. We need to show that if $\lambda(x):=\lambda_k(x)$ is some eigenvalue of $U$ then $\lambda(x)\in\MM_2^q$. Let $v(x)=(v_j(x))_{j=1}^K$ be an eigenvector of $U$ corresponding to the eigenvalue $\lambda(x)$ and such that $\sum|v_j(x)|^2=1$ for almost every $x\in\T^d$. Note that such a measurable vector valued function $v$ exists due to the decomposition $U=V^* \Lambda V$ recalled above. For a function $g\in L^2(\T^d)$ the last equality implies that $\sum|v_j|^2\lambda g =\lambda g$ almost everywhere, and so, by H\"older inequality, that there exists some $1\leq j_0\leq K$ which satisfies \[
\frac{1}{K}\|\mathfrak{F}(\lambda g)\|_{\ell^q}\leq \|\mathfrak{F}\big(|v_{j_0}|^2\lambda g \big)\|_{\ell^q}\leq \|\mathfrak{F}_K\big((\lambda g \overline{v_{j_0} })v\big)\|_{[\ell^q]^K}.
\]
Since $v$ is an eigenvector of $U$ corresponding to the eigenvalue $\lambda$ and $U$ is a matrix valued $(2,q)$--multiplier, the right hand side in the last displayed inequality satisfies
\[
\begin{aligned}
\|\mathfrak{F}_K\big((\lambda g \overline{v_{j_0}})v\big)\|_{[\ell^q]^K}=\|\mathfrak{F}_K\big(U (g \overline{v_{j_0}}v)\big)\|_{[\ell^q]^K} \leq C\|g\overline{v_{j_0}}v\|_{[L^2(\T^d)]^K}.
\end{aligned}
\]
Recalling that $\sum|v_j|^2=1$, we conclude that $\|\mathfrak{F}(\lambda g )\|_{\ell^q}\lesssim \|g\|_{L^2(\T^d)}$ and therefore that $\lambda \in\MM_2^q$.

Next we prove the reverse inclusion.  Suppose that for each $k=1,...,K$ we have, $\lambda_k \in \MM_2^q$.  In particular, this implies that if $\psi=(\psi_j)_{j=1}^K\in [L^2(\T^d)]^K$ then
\begin{equation}\label{smallbigmult}
\|\mathfrak{F}_K(\lambda_k \psi)\|_{[\ell^q]^K}\leq C\|\psi\|_{ [L^2(\T^d)]^K},\qquad k=1,...,K.
\end{equation}

For every fixed $x$, let $v^1(x),...,v^K(x)$ be an orthonormal basis of eigenvectors corresponding to the eigenvalues $\lambda_1,..,\lambda_K$.  Given $\phi\in [L^2(\T^d)]^K$ there exist measurable functions $b_1(x),...,b_2(x)$ so that $\phi(x)=\sum_{k=1}^Kb_k(x)v^k(x)$ and $\sum|\phi_k(x)|^2=\sum|b_k(x)|^2$ for almost every x. Since the vectors $v^k$ are eigenvectors, it now follows that
\[
\|\mathfrak{F}_K(U\phi)\|_{[\ell^q]^K}\leq \sum_{k=1}^K\|\mathfrak{F}_K\big(\lambda_k(x)b_k(x)v^k(x)\big)\|_{[\ell^q]^K}.
\]
The inequality in (\ref{smallbigmult}) now implies that the right hand side of the last displayed equation is less then a constant multiplying,
\[
\sum_{k=1}^K\|b_k(x)v^k(x)\|_{[L^2(\T^d)]^K}\leq K^{\frac{1}{2}}\Big(\sum_{k=1}^K\|b_k(x)v^k(x)\|^2_{[L^2(\T^d)]^K}\Big)^{\frac{1}{2}}= K^{\frac{1}{2}}\|\phi\|_{[L^2(\T^d)]^K},
\]
where the last step is due to the fact that for almost every $x\in\T^d$ we have $\sum|\phi_k(x)|^2=\sum|b_k(x)|^2$, and, for every fixed $k$, $\sum_{j}|v^k_j(x)|=1$. The result follows.
\end{proof}

Next we formulate and prove a more general version of Corollaries \ref{thm:SobolevResultSingleZero-2-2-matrix} and \ref{thm:ScalarValuedLargeZeroResult-matrix}.

\begin{corollary}[Nitzan, Northington, Powell]\label{thm:SobolevResultSingleZero-matrix-r}
 Let $1< r<\infty$, $d/r<s\le d(1/2+1/r)$ and let $W \in [L^1(\T^d)]^{K\times K}$ be a Hermitian matrix valued function whose eigenvalues are given by $\lambda_1(x)\ge \lambda_2(x) \ge \cdots \ge \lambda_K(x)$. If $\lambda_k\in W^{s,r}(\T^d)$ for every $k=1,...K$, and $\det(W)$ has a zero, then conclusions ($i$) and ($ii$) of Theorem \ref{thm:SobolevResultSingleZero-r-2} hold with $U=W^{-1}$ replacing $u=1/w$ and with $\mathfrak{M}_2^q(K)$ replacing $\mathcal{M}_2^q$. In particular, this result holds for a nonnegative matrix valued function satisfying  $W \in [{W^{s,r}(\T^d)}]^{K\times K}$.
\end{corollary}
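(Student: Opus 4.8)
The plan is to deduce this from the scalar result, Theorem~\ref{thm:SobolevResultSingleZero-r-2}, via Theorem~\ref{thm:KMultEig}, using two elementary observations. First, wherever $W$ is invertible the eigenvalues of $W^{-1}$ are exactly the reciprocals $1/\lambda_1,\dots,1/\lambda_K$ of those of $W$; indeed if $W=V^*\Lambda V$ then $W^{-1}=V^*\Lambda^{-1}V$. Second, since $s>d/r$ we have $W^{s,r}(\T^d)\hookrightarrow C(\T^d)$, so each $\lambda_k$ is continuous, hence so is $\det W=\prod_{k=1}^K\lambda_k$, and if $\det W$ has a zero at some $x_0\in\T^d$ then $\lambda_j(x_0)=0$ for at least one index $j$. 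I will also use, as in the scalar proof, that the critical range $2\le q\le\frac{d}{d(1/2+1/r)-s}$ depends on $(s,r)$ only through $\alpha=s-\frac dr$, since $d(1/2+1/r)-s=\tfrac d2-\alpha$. I would split the argument into the negative direction (the non--membership statements in conclusions (i) and (ii)), the sharpness statement (the converse in (i)), and the reduction of the ``in particular'' clause to the hypothesis $\lambda_k\in W^{s,r}(\T^d)$.

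For the negative direction, first take $\frac dr<s<\frac dr+1$ and assume $\lambda_k\in W^{s,r}(\T^d)$ for all $k$ and that $\det W$ has a zero; fix $x_0$ and $j$ as above. Applying Theorem~\ref{thm:SobolevResultSingleZero-r-2}(i) to the scalar function $\lambda_j\in W^{s,r}(\T^d)$, which vanishes at $x_0$, gives $1/\lambda_j\notin\mathcal{M}_2^q$ whenever $2\le q\le\frac{d}{d(1/2+1/r)-s}$. Suppose for contradiction that $W^{-1}\in\mathfrak{M}_2^q(K)$ for such a $q$. By the very definition of $\mathfrak{M}_2^q(K)$ this presupposes $W^{-1}\in[L^2(\T^d)]^{K\times K}$, so $W(x)$ is invertible for a.e.\ $x$ and $W^{-1}$ is Hermitian with eigenvalues $1/\lambda_1,\dots,1/\lambda_K$; Theorem~\ref{thm:KMultEig} then forces $1/\lambda_k\in\mathcal{M}_2^q$ for every $k$, contradicting the case $k=j$. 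Hence $W^{-1}\notin\mathfrak{M}_2^q(K)$, which is conclusion (i). For conclusion (ii), $s=\frac dr+1$: each $\lambda_k$ lies in $W^{s',r}(\T^d)$ for every $s'<\frac dr+1$, and applying conclusion (i) with $s'$ in place of $s$ and letting $s'\uparrow\frac dr+1$ yields $W^{-1}\notin\mathfrak{M}_2^q(K)$ for all $2\le q<\frac{2d}{d-2}$, exactly as in the scalar case.

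For the sharpness statement, given $0<s\le d$ and $q>\frac{d}{d(1/2+1/r)-s}$, take a nonnegative $w\in W^{s,r}(\T^d)$ with a zero and $1/w\in\mathcal{M}_2^q$ (provided by the converse part of Theorem~\ref{thm:SobolevResultSingleZero-r-2}(i), e.g.\ a function $w_\beta$ from Proposition~\ref{prop:MultCounter}), and set $W=w\,I_K$. Then $W$ is a nonnegative Hermitian element of $[W^{s,r}(\T^d)]^{K\times K}$, all of its eigenvalues equal $w$, and $\det W=w^K$ has a zero; since $W^{-1}=(1/w)\,I_K$ and $1/w\in\mathcal{M}_2^q\subset L^2(\T^d)$, the operator $T_{W^{-1}}$ acts coordinatewise as $T_{1/w}$ and is therefore bounded from $[\ell^2(\Z^d)]^K$ to $[\ell^q(\Z^d)]^K$, i.e.\ $W^{-1}\in\mathfrak{M}_2^q(K)$ (one may also invoke Theorem~\ref{thm:KMultEig}).

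Finally, to obtain the ``in particular'' clause I must show that a nonnegative Hermitian $W\in[W^{s,r}(\T^d)]^{K\times K}$ automatically has $\lambda_k\in W^{s,r}(\T^d)$ for every $k$, after which the steps above apply. This is the only point I expect to be genuinely delicate: the sorted eigenvalues of a matrix valued Sobolev function can fail to retain its full order of smoothness when $s>1$ (they are in general only Lipschitz, because of eigenvalue crossings). I would sidestep this using the observation that the conclusion depends on $(s,r)$ only through $\alpha=s-\frac dr$, which in the relevant range satisfies $0<\alpha<1$ (for conclusion (ii) after first reducing via $s'\uparrow\frac dr+1$): by Theorem~\ref{thm:SobEmb} one may replace $(s,r)$ by a pair $(s',r')$ with $0<s'\le1$, $r<r'<\infty$ and $s'-\frac{d}{r'}=\alpha$, keeping the same critical range of $q$, so it suffices to treat $s'\le1$. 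For $s'\le1$ the claim is routine: Weyl's perturbation inequality gives $|\lambda_k(W(x))-\lambda_k(W(y))|\le\|W(x)-W(y)\|_{\mathrm{op}}\lesssim\max_{i,j}|W_{ij}(x)-W_{ij}(y)|$ and $|\lambda_k(W(x))|\lesssim\max_{i,j}|W_{ij}(x)|$, so the Slobodeckij seminorm of $\lambda_k$ (respectively, for $s'=1$, its weak gradient, via the chain rule for Lipschitz functions of Sobolev maps) and its $L^{r'}$ norm are dominated by the corresponding quantities for the entries of $W$; thus $\lambda_k\in W^{s',r'}(\T^d)$ and the already established cases of the corollary apply at the exponents $(s',r')$. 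The remaining bookkeeping point, that membership in $\mathfrak{M}_2^q(K)$ entails membership in $[L^2(\T^d)]^{K\times K}$ so that Theorem~\ref{thm:KMultEig} truly applies to $W^{-1}$, is immediate from the definition of $\mathfrak{M}_2^q(K)$.
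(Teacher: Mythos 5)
Your argument is correct and follows essentially the same route as the paper's proof: reduce to the scalar Theorem~\ref{thm:SobolevResultSingleZero-r-2} via Theorem~\ref{thm:KMultEig} together with the continuity of the eigenvalues (so that a zero of $\det W=\prod_k\lambda_k$ forces a zero of some $\lambda_j$), and handle the ``in particular'' clause by the Sobolev--embedding reduction to $0<s'\le 1$ followed by the Lipschitz bound $|\lambda_k(W(x))-\lambda_k(W(y))|\le\|W(x)-W(y)\|_{\mathrm{op}}$, which is exactly the content of the Lemma~4.3 of \cite{HNP} that the paper cites. Your explicit treatment of the sharpness direction via $W=w_\beta I_K$ is a small, correct addition that the paper leaves implicit.
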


\begin{proof}
We will prove part (i) and remark that part (ii) follows from part (i) with the same argument as part (ii) of Theorem \ref{thm:SobolevResultSingleZero-r-2}.

Since $s>d/r$, we have, by Theorem \ref{thm:FracHolderEmb}, that each $\lambda_k$ is continuous.  Thus, $\det(W)=\lambda_1\cdots \lambda_K$ has a zero if and only if for at least one $k$, $\lambda_k$ has a zero. However, since $U =W^{-1}\in \mathfrak{M}_2^q(K)$, Theorem \ref{thm:KMultEig} implies that $\lambda_k^{-1} \in \mathcal{M}_2^q$.  Therefore, it must be that $q>\frac{d}{d(1/2+1/r)-s}$ or else the function $\lambda_k$ contradicts Theorem \ref{thm:SobolevResultSingleZero-r-2}.

To prove the particular case of $W \in [W^{s,2}(\T^d)]^{K\times K}$ we may, without loss of generality, assume that $0<s<1$, since Theorem \ref{thm:SobEmb} allows us to replace $W^{s,r}$ with $W^{\tilde{s}, \tilde{r}}$ where $\tilde{s}<1$ and $s-d/r = \tilde{s}-d/\tilde{r}$.  It is straightforward to check that the assumptions on $s$ and $r$ and the restriction on $q$ are all invariant under this change. Since we assume $0<s<1$, Lemma 4.3 of \cite{HNP} implies that the eigenvalue functions $\{\lambda_k\}_{k=1}^K$ of $W$ each satisfy $\lambda_k \in W^{s,r}(\T^d)$. (In fact, this lemma is given for the space $W^{s,2}(\T^d)$, but the calculations remain unchanged when replacing the seminorm for $W^{s,2}(\T^d)$ by that of $W^{s,r}(\T^d)$.) The result now follows from the general statement of the corollary.
\end{proof}

\begin{corollary}[Nitzan, Northington, Powell]\label{thm:LargeZeroResult-matrix}
Let $2\le r<\infty$, $0< \sigma <d$, $(d-\sigma)/r < s \le 1$, and let $W \in [L^1(\T^d)]^{K\times K}$ be a Hermitian matrix valued function whose eigenvalues are given by $\lambda_1(x)\ge \lambda_2(x) \ge \cdots \ge \lambda_K(x)$. If $\lambda_k\in W^{s,r}(\T^d)$ for every $k=1,...K$, and $\det(W)$ has a zero, then conclusions ($i$) and ($ii$) of Theorem \ref{thm:ScalarValuedLargeZeroGeneralResult} hold with $U=W^{-1}$ replacing $u=1/w$ and with $\mathfrak{M}_2^q(K)$ replacing $\mathcal{M}_2^q$. In particular, this result holds for a nonnegative matrix valued function satisfying  $W \in [W^{s,r}(\T^d)]^{K\times K}$.
 \end{corollary}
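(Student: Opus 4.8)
The plan is to mirror the proof of Corollary \ref{thm:SobolevResultSingleZero-matrix-r}, substituting Theorem \ref{thm:ScalarValuedLargeZeroGeneralResult} for Theorem \ref{thm:SobolevResultSingleZero-r-2} at the appropriate point. The structure of the argument is dictated by two facts already established in the excerpt: Theorem \ref{thm:KMultEig}, which says that $U=W^{-1} \in \mathfrak{M}_2^q(K)$ if and only if $\lambda_k^{-1} \in \mathcal{M}_2^q$ for every $k$; and the hypothesis that each eigenvalue $\lambda_k$ lies in $W^{s,r}(\T^d)$. So the matrix problem is reduced to the scalar problem for a single well-chosen eigenvalue.

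First I would handle the zero-set bookkeeping. The hypothesis is $\mathcal{H}^\sigma(\Sigma(\det W)) > 0$, and since $\det W = \lambda_1 \cdots \lambda_K$, the generalized zero set $\Sigma(\det W)$ is contained in $\bigcup_{k=1}^K \Sigma(\lambda_k)$ (one must check, using the averaging definition \eqref{eqn:ZeroSetDef} and boundedness of the $\lambda_k$ near a point — which holds because $s > (d-\sigma)/r$ gives enough regularity, or more simply by Lemma \ref{lem:nonsymCont}-type continuity on lines — that if the average of $|\det W|$ vanishes then so must the average of $|\lambda_k|$ for at least one $k$ at that point). By countable subadditivity of Hausdorff measure, there is at least one index $k_0$ with $\mathcal{H}^\sigma(\Sigma(\lambda_{k_0})) > 0$. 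Now $\lambda_{k_0} \in W^{s,r}(\T^d)$ satisfies exactly the hypotheses of Theorem \ref{thm:ScalarValuedLargeZeroGeneralResult}, so its reciprocal cannot be a $(2,q)$--multiplier for $q$ in the stated range. But Theorem \ref{thm:KMultEig} forces $\lambda_{k_0}^{-1} \in \mathcal{M}_2^q$ as soon as $U = W^{-1} \in \mathfrak{M}_2^q(K)$, a contradiction. This gives conclusion (i); conclusion (ii) follows the same way using part (ii) of Theorem \ref{thm:ScalarValuedLargeZeroGeneralResult}, noting that $\lambda_{k_0}^{-1} \notin L^2$ forces $U \notin \mathfrak{M}_2^q(K)$ for every $q$ since the scalar multiplier norm is dominated by the matrix one.

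For the ``in particular'' clause, where the hypothesis is only that $W$ is a nonnegative matrix valued function with $W \in [W^{s,r}(\T^d)]^{K\times K}$, I would reduce to the eigenvalue hypothesis exactly as in the proof of Corollary \ref{thm:SobolevResultSingleZero-matrix-r}: the constraint $(d-\sigma)/r < s \le 1$ already has $s \le 1$, so no preliminary Sobolev embedding is needed, and Lemma 4.3 of \cite{HNP} (stated there for $W^{s,2}$ but valid verbatim for $W^{s,r}$, as noted in that earlier proof) gives that each eigenvalue function $\lambda_k$ of a nonnegative $W \in [W^{s,r}]^{K\times K}$ belongs to $W^{s,r}(\T^d)$. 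Then the general statement applies.

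The main obstacle is the one genuinely new point: verifying that $\mathcal{H}^\sigma(\Sigma(\det W)) > 0$ implies $\mathcal{H}^\sigma(\Sigma(\lambda_{k_0})) > 0$ for some $k_0$, i.e. controlling the generalized zero set of a product by the union of the generalized zero sets of the factors. For continuous functions this is trivial, and under the running hypotheses the $\lambda_k$ have representatives that are H\"older continuous on almost every coordinate line (Proposition \ref{prop:HolderEmbedLines}) with zero set exactly $\Sigma(\lambda_k)$; combined with the boundedness of the $\lambda_k$, a pointwise argument on the averages in \eqref{eqn:ZeroSetDef} shows $\Sigma(\det W) \subset \bigcup_k \Sigma(\lambda_k)$ up to a set of $\sigma$--measure zero, which suffices. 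Everything else is a direct transcription of the already-proven scalar and matrix arguments.
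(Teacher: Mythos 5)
Your skeleton matches the paper's: reduce to a scalar eigenvalue via Theorem \ref{thm:KMultEig}, apply Theorem \ref{thm:ScalarValuedLargeZeroGeneralResult} to that eigenvalue, and obtain the ``in particular'' clause from Lemma 4.3 of \cite{HNP}. But the step you yourself flag as ``the main obstacle'' --- passing from $\mathcal{H}^\sigma(\Sigma(\det W))>0$ to $\mathcal{H}^\sigma(\Sigma(\lambda_{k_0}))>0$ for some $k_0$ --- is a genuine gap as sketched. The containment $\Sigma(fg)\subseteq\Sigma(f)\cup\Sigma(g)$ is simply false for general measurable factors (take $f=\chi_A$, $g=\chi_{A^c}$: then $fg\equiv 0$, so $\Sigma(fg)=\T^d$, while $\Sigma(f)\cup\Sigma(g)$ omits every point where both $A$ and $A^c$ have positive density), so some structural input is mandatory; and the input you propose --- H\"older continuity of the $\lambda_k$ on almost every coordinate line via Proposition \ref{prop:HolderEmbedLines} --- does not obviously close it. The set $\Sigma$ is defined in \eqref{eqn:ZeroSetDef} by $d$--dimensional averages, not by line restrictions, and the exceptional family of lines is only Lebesgue--null in $\T^{d-1}$; a Lebesgue--null union of lines can still carry positive (even full--dimensional) Hausdorff measure, so ``up to a set of $\sigma$--measure zero'' is not justified. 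The paper avoids all of this by exploiting the ordering of the eigenvalues: for the smallest eigenvalue one has $|\lambda_K|^K\le|\det W|$ pointwise, hence
\[
\frac{1}{|I_\tau|}\int_{I_\tau}|\lambda_K|\,dx \;\le\; \left(\frac{1}{|I_\tau|}\int_{I_\tau}|\lambda_K|^K\,dx\right)^{1/K}\;\le\;\left(\frac{1}{|I_\tau|}\int_{I_\tau}|\det(W)|\,dx\right)^{1/K},
\]
which gives $\Sigma(\det W)\subseteq\Sigma(\lambda_K)$ exactly --- no continuity, no exceptional sets, no subadditivity. This is the single new idea of the corollary, and it is the piece your argument is missing.

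A second, smaller issue concerns the ``in particular'' clause. You assert that since $s\le 1$ no embedding is needed and Lemma 4.3 of \cite{HNP} applies verbatim, but as used in the proof of Corollary \ref{thm:SobolevResultSingleZero-matrix-r} that lemma covers $0<s<1$ only. The endpoint $s=1$, which is allowed here, requires the separate argument the paper supplies: the Lipschitz--type bound $|\lambda_k(x)-\lambda_k(y)|\le\bigl(\sum_{i,j}|W_{ij}(x)-W_{ij}(y)|^2\bigr)^{1/2}$ together with the characterization of $W^{1,r}$ by $L^r$--bounds on difference quotients (Theorem 5.8.3 of \cite{Evans}).
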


\begin{proof}
It suffices to prove that if $\mathcal{H}^\sigma(\Sigma(\det(W)))>0$, then for the smallest eigenvalue $\lambda_K$ of $W$ we have $\mathcal{H}^\sigma(\Sigma(\lambda_K)))>0$, for then the result follows from Theorems \ref{thm:KMultEig} and \ref{thm:ScalarValuedLargeZeroGeneralResult}. For $\tau>0$ and $x_0\in\T$ consider $I_{\tau}:=I_{\tau}(x_0)$. We have,
\[ \frac{1}{|I_\tau|} \int_{I_\tau} |\lambda_K(x)| dx \le \left( \frac{1}{|I_\tau|} \int_{I_\tau} |\lambda_K(x)|^K dx\right)^{1/K}\le \left( \frac{1}{|I_\tau|} \int_{I_\tau} |\det(W)(x)| dx\right)^{1/K},\]
and therefore, $\Sigma(\det{W})\subset\Sigma(\lambda_K)$.

When $0<s<1$, the particular case of $W \in [W^{s,r}(\T^d)]^{K\times K}$ follows by directly applying Lemma 4.3 of \cite{HNP}, as was described in the proof of the previous corollary. For $s=1$, the same lemma gives the bound
\[ |\lambda_k(x)-\lambda_k(y)|\le \sqrt{\sum_{i,j} |W_{ij}(x)-W_{i,j}(y)|^2}. \]
Theorem 5.8.3 of \cite{Evans}, which equates $L^p$--norms of difference quotients to $L^p$--norms of distributional derivatives, combined with the equation above, implies that $\lambda_k \in W^{1,r}(\T^d)$.
\end{proof}

\subsection{A remark regarding \texorpdfstring{$(p,q)$--multipliers}{(p,q)--multipliers}} \label{subsec:PQMult}

Given $1\leq p\leq q\leq \infty$, we say that a distribution $u$ is a $(p,q)$--\textit{multiplier} if the operator $T_u$, defined by
\begin{equation}\label{eqn:defPQMult}
T_u a= \mathcal{F}(u\mathcal{F}^{-1}{a}),
\end{equation}
is a bounded operator from  $\ell^p(\Z^d)$ to $\ell^q(\Z^d)$. The family of all such multipliers is denoted by $\mathcal{M}_p^q$. Endowed with the operator norm $\MM_p^q$ is a Banach space.  Such spaces were studied by A. Devinatz and I. I. Hirschman Jr. \cite{DH}, and their analog over $\R^d$ was studied by L. H\"ormander \cite{H}. The space  $\mathfrak{M}_p^q(K)$ of matrix valued $(p,q)$--multipliers can be defined similarly.

  Part (i) of the following proposition is a special case of Theorem 1.3 in \cite{H}. In part (ii) of the proposition we  extend this result to the matrix valued multiplier setting. As the proof in both cases is similar, we omit it.
\begin{proposition}[Part (i) appears in \cite{H}] \label{prop:Reduction2qMult}
Suppose that $p$ and $q$ satisfy either $1 \le p \le q \le 2$ or $2 \le p \le q \le \infty$ and denote $\tilde{q}=(1/2-1/p+1/q)^{-1}$. Then,
\begin{itemize}
\item[i.] If $u \in \mathcal{M}_p^q$, then $u \in \mathcal{M}_{2}^{\tilde{q}}$.
\item[ii.] If $U \in \mathfrak{M}_p^q(K)$, then $U \in \mathfrak{M}_{2}^{\tilde{q}}(K)$.
\end{itemize}
\end{proposition}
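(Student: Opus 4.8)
The plan is to isolate the single structural feature that drives the statement — \emph{translation invariance} — then invoke Hörmander's theorem from \cite{H} for part~(i), and deduce part~(ii) from part~(i) by an elementary coordinatewise argument. The first observation is that $T_u$ commutes with every shift $\tau_n\colon a\mapsto a(\cdot-n)$, $n\in\Z^d$, on each $\ell^r(\Z^d)$: under $\mathcal F^{-1}$ the shift $\tau_n$ becomes pointwise multiplication by the character $e^{2\pi i\langle n,x\rangle}$, which commutes with multiplication by $u$, so $T_u\tau_n=\tau_nT_u$; with $U$ in place of $u$ the identical computation shows $T_U$ commutes with the componentwise shifts on $[\ell^r(\Z^d)]^K$. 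Thus $\mathcal M_p^q$ and $\mathfrak M_p^q(K)$ consist exactly of the bounded translation invariant operators between the relevant $\ell^p$ and $\ell^q$ spaces, and since each $\tau_n$ is an isometry of every $\ell^r$, it is precisely this invariance that makes the exponent $\tilde q$ rigid.

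For part~(i) I would first cut down to one parameter range. The adjoint of $T_u\colon\ell^p\to\ell^q$ is $T_{\bar u}\colon\ell^{q'}\to\ell^{p'}$, so $u\in\mathcal M_p^q$ if and only if $\bar u\in\mathcal M_{q'}^{p'}$ (testing against finitely supported sequences when an exponent equals $\infty$); since $\tfrac12-\tfrac1{q'}+\tfrac1{p'}=\tfrac12-\tfrac1p+\tfrac1q$, the quantity $\tilde q$ is unchanged, and the range $2\le p\le q\le\infty$ is carried to $1\le q'\le p'\le2$, so it suffices to treat $1\le p\le q\le2$, where $\tilde q\ge2$. On the soft side, the Hausdorff--Young inequality on $\T^d$ followed by Hölder's inequality — the same argument that gives the inclusion $L^{2q/(q-2)}(\T^d)\subset\mathcal M_2^q$ quoted after Theorem~\ref{thm:ScalarValuedLargeZeroResult} — shows that $L^\rho(\T^d)\subset\mathcal M_2^{\tilde q}$ whenever $1/\rho=1/p-1/q$. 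The genuine content, that a translation invariant $T_u\colon\ell^p\to\ell^q$ must map $\ell^2$ boundedly into $\ell^{\tilde q}$, is Hörmander's Theorem~1.3 in \cite{H}; its proof over $\R^d$ combines translation invariance, Hausdorff--Young, duality, and the behaviour of multiplier norms under the dilation group, and the present $\T^d/\Z^d$ version is obtained either by transferring the $\R^d$ statement or by repeating that argument with the inflation maps $n\mapsto Nn$ (and their Fourier--side counterparts on $\T^d$) playing the role of dilations.

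Part~(ii) then follows from part~(i) with no new analysis. Given $U\in\mathfrak M_p^q(K)$, feeding $T_U$ an input supported in a single coordinate $j$ and reading off a single output coordinate $i$ shows that each entry $U_{ij}$ is a scalar $(p,q)$--multiplier with $\|U_{ij}\|_{\mathcal M_p^q}\le\|U\|_{\mathfrak M_p^q(K)}$. By part~(i), $U_{ij}\in\mathcal M_2^{\tilde q}$ for all $i,j$; since $(T_UA)_i=\sum_{j=1}^K T_{U_{ij}}A_j$, the triangle inequality in $\ell^{\tilde q}(\Z^d)$ gives $\|(T_UA)_i\|_{\ell^{\tilde q}}\lesssim_K\big(\max_{i,j}\|U_{ij}\|_{\mathcal M_2^{\tilde q}}\big)\|A\|_{[\ell^2(\Z^d)]^K}$, and summing the $\tilde q$-th powers over $i$ yields $U\in\mathfrak M_2^{\tilde q}(K)$. (One cannot here route through the eigenvalues as in Theorem~\ref{thm:KMultEig}, since that reduction is special to $p=2$; alternatively, Hörmander's own argument is soft enough to run with $[\ell^r(\Z^d)]^K$--valued sequences directly, which is the sense in which the two proofs are "the same".)

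The crux is the step in part~(i) attributed to \cite{H}: Hörmander's exponent--rigidity theorem and, in the present periodic formulation, either its transference to $\T^d$ despite the symbol being merely in $L^2(\T^d)$, or an intrinsic proof on $\Z^d$ where the dilation symmetry used over $\R^d$ is not literally available. Every other step — the translation invariance, the duality reduction, the Hausdorff--Young/Hölder inclusion, and the coordinatewise passage from (i) to (ii) — is routine.
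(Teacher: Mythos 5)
Your proposal is correct, and since the paper omits the proof entirely (citing H\"ormander for part (i) and asserting that part (ii) is ``similar''), your write-up actually supplies more than the paper does. For part (i) you and the paper do the same thing --- invoke Theorem 1.3 of \cite{H} --- though the ``crux'' you flag (transferring the result from $\R^d$ to $\Z^d$/$\T^d$, or finding a substitute for dilations) is less of an obstacle than you suggest: the proof of that theorem is duality ($u\in\mathcal M_p^q$ iff $\bar u\in\mathcal M_{q'}^{p'}$) followed by Riesz--Thorin interpolation between $T_u\colon\ell^p\to\ell^q$ and $T_u\colon\ell^{q'}\to\ell^{p'}$, choosing $\theta$ so that the interpolated domain exponent is $2$ (possible precisely because $1/2$ lies between $1/p$ and $1/q'$ in both stated ranges, and the difference $1/p_\theta-1/q_\theta$ is constant along the interpolation line); this argument uses no dilation structure and runs verbatim on $\Z^d$, which is presumably why the paper calls it a ``special case.'' For part (ii) you take a genuinely different route from the one the paper gestures at: rather than rerunning H\"ormander's argument with $[\ell^r(\Z^d)]^K$--valued sequences, you reduce to the scalar case entrywise --- restricting inputs to one coordinate and projecting outputs to another shows $\|U_{ij}\|_{\mathcal M_p^q}\le\|U\|_{\mathfrak M_p^q(K)}$, and the identity $(T_UA)_i=\sum_j T_{U_{ij}}A_j$ reassembles the bound with a harmless factor depending on $K$. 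Both routes work; yours is shorter and avoids checking that the vector-valued interpolation goes through, at the cost of a non-sharp constant in $K$ (irrelevant here, since only membership in $\mathfrak M_2^{\tilde q}(K)$ is claimed). Your parenthetical that the eigenvalue reduction of Theorem \ref{thm:KMultEig} is unavailable is also right, both because it requires $p=2$ and because $U$ is not assumed Hermitian in this proposition.
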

It follows from this lemma that if $1 \le p \le q \le 2$ or $2\le p \le q \le \infty$ then $\mathcal{M}_p^q\subset L^2(\T^d)$ and, in particular, contains only functions. We therefore focus our attention on these cases. Combining Proposition \ref{prop:Reduction2qMult} with theorems \ref{thm:SobolevResultSingleZero-r-2} and \ref{thm:ScalarValuedLargeZeroGeneralResult}, as well as with corollaries \ref{thm:SobolevResultSingleZero-2-2-matrix} and \ref{thm:ScalarValuedLargeZeroResult-matrix}, one can obtain $(p,q)$--multiplier versions of these results. To illustrate, we give in the corollary below a $(p,q)$--multiplier version of Theorem \ref{thm:SobolevResultSingleZero-r-2}.

\begin{corollary}[Nitzan, Northington, Powell] \label{cor_Mpq}
 Let $2\le r<\infty$, $d/r< s \le d(1/p+1/r)$. Suppose $w \in W^{s,r}(\T^d)$ and $w$ has a zero.
\begin{itemize}
	\item[i.] If $ s<\frac{d}{r}+1$, then $u=\frac{1}{w} \notin \mathcal{M}_p^q$ for any $q$ satisfying $2 \le q \le \frac{d}{d(1/p+1/r)-s}$.  Conversely, for any $0<s\le d$ and  $q>\frac{d}{d(1/p+1/r)-s}$, there exists $w \in W^{s,r}(\T^d)$ such that $w$ has a zero and $u=\frac{1}{w}\in \mathcal{M}_p^q$.
	\item[ii.] If $s=\frac{d}{r}+1$, then $u=\frac{1}{w} \notin \mathcal{M}_p^q$ for any $q$ satisfying $2 \le q<\frac{pd}{d-p}$.
	\end{itemize}
\end{corollary}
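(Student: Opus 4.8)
The plan is to obtain Corollary~\ref{cor_Mpq} exactly as the text promises: reduce to the already-established $(2,\tilde q)$ setting by way of the interpolation lemma, Proposition~\ref{prop:Reduction2qMult}, and then invoke Theorem~\ref{thm:SobolevResultSingleZero-r-2}; the sharpness half will come from the explicit functions of Proposition~\ref{prop:MultCounter} together with Young's convolution inequality. Throughout we are in the regime $2\le p\le q\le\infty$, the one in which Proposition~\ref{prop:Reduction2qMult} applies.

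For the non-membership statement in part~(i), I would argue by contradiction: suppose $u=1/w\in\mathcal M_p^q$ for some $w\in W^{s,r}(\T^d)$ with a zero and some $q$ with $2\le q\le\frac{d}{d(1/p+1/r)-s}$. Then Proposition~\ref{prop:Reduction2qMult}(i) gives $u\in\mathcal M_2^{\tilde q}$ with $\tilde q=(1/2-1/p+1/q)^{-1}$, and the heart of the argument is the arithmetic check that $\tilde q$ lands in the interval $[2,\frac{d}{d(1/2+1/r)-s}]$ on which Theorem~\ref{thm:SobolevResultSingleZero-r-2}(i) forbids membership. For the lower bound, $q\ge p$ gives $1/\tilde q=1/2-1/p+1/q\le 1/2$, so $\tilde q\ge 2$; for the upper bound, rewrite the hypothesis on $q$ as $1/q\ge 1/p+1/r-s/d$ and substitute to get $1/\tilde q\ge 1/2+1/r-s/d$, i.e.\ $\tilde q\le\frac{d}{d(1/2+1/r)-s}$. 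One also verifies the remaining hypotheses of Theorem~\ref{thm:SobolevResultSingleZero-r-2}(i): $d/r<s$ is assumed, $s\le d(1/p+1/r)\le d(1/2+1/r)$ because $p\ge 2$, and $s<d/r+1$ is assumed in part~(i). Then Theorem~\ref{thm:SobolevResultSingleZero-r-2}(i) says $u\notin\mathcal M_2^{\tilde q}$, a contradiction.

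For the converse in part~(i), fix $q>\frac{d}{d(1/p+1/r)-s}$; rearranging, this is precisely $d(1/p-1/q)>s-d/r$, so one may choose $\beta\in\bigl(s-d/r,\ \min\{1,\,d(1/p-1/q)\}\bigr)$ and let $w_\beta$ be as in Proposition~\ref{prop:MultCounter}, so that $w_\beta\in W^{s,r}(\T^d)$ has a zero. Since $T_{1/w_\beta}a=\widehat{1/w_\beta}\ast a$ is convolution on $\Z^d$, and $\widehat{1/w_\beta}(k)$ decays like $|k|^{\beta-d}$ (the standard decay of the Fourier coefficients of a function with an isolated $|x|^{-\beta}$ singularity), the inequality $\beta<d(1/p-1/q)$ is exactly what places $\widehat{1/w_\beta}$ in $\ell^m$ with $1/m=1-1/p+1/q\in[1/2,1]$, whence Young's convolution inequality yields $1/w_\beta\in\mathcal M_p^q$. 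Part~(ii) then follows from part~(i): if $w\in W^{d/r+1,r}(\T^d)$ then $w\in W^{s',r}(\T^d)$ for every $s'<d/r+1$, so~(i) gives $1/w\notin\mathcal M_p^q$ whenever $q\le\frac{d}{d(1/p+1/r)-s'}$, and letting $s'\uparrow d/r+1$ covers every $q<\frac{d}{d/p-1}=\frac{pd}{d-p}$.

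I expect the main obstacle to be purely organizational rather than analytic: confirming that $\tilde q$ always falls inside $[2,\frac{d}{d(1/2+1/r)-s}]$, and that every parameter restriction of Theorem~\ref{thm:SobolevResultSingleZero-r-2} and Proposition~\ref{prop:MultCounter} is met under the hypotheses of the corollary---in particular the existence of an admissible $\beta$, which is exactly where the sharpness exponent $\frac{d}{d(1/p+1/r)-s}$ comes from. All of the genuine analytic content is already contained in Proposition~\ref{prop:Reduction2qMult}, Theorem~\ref{thm:SobolevResultSingleZero-r-2}, and Proposition~\ref{prop:MultCounter}.
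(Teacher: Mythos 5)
Your proposal is correct and is precisely the combination the paper intends: reduce to the $(2,\tilde q)$ case via Proposition~\ref{prop:Reduction2qMult} with $1/\tilde q=1/2-1/p+1/q$, check that the hypotheses $2\le q\le\frac{d}{d(1/p+1/r)-s}$ and $p\ge 2$ place $\tilde q$ in $[2,\frac{d}{d(1/2+1/r)-s}]$, and invoke Theorem~\ref{thm:SobolevResultSingleZero-r-2}; your arithmetic for both bounds on $\tilde q$ and the limiting argument $s'\uparrow d/r+1$ for part~(ii) are all right. The one place you depart from the paper's toolkit is the sharpness half: the paper's Proposition~\ref{prop:MultCounter} certifies $1/w_\beta\in\mathcal M_2^q$ through the embedding $L^{2q/(q-2)}(\T^d)\subset\mathcal M_2^q$, whose natural $(p,q)$ analogue (Hausdorff--Young applied on both sides of the multiplication) is $L^{b}(\T^d)\subset\mathcal M_p^q$ with $1/b=1/p-1/q$, and integrating $|x|^{-\beta b}$ gives exactly your condition $\beta<d(1/p-1/q)$ without needing any pointwise Fourier decay. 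Your route via Young's inequality on $\Z^d$ together with the decay $\widehat{1/w_\beta}(k)=O(|k|^{\beta-d})$ reaches the same threshold, but it asks you to justify that decay estimate (writing $1/w_\beta$ as a cut-off copy of $|x|^{-\beta}$ plus a smooth periodic remainder), which is standard but is an extra step the $L^b$ embedding avoids; either way the admissible interval for $\beta$ is $\bigl(\max\{0,s-d/r\},\min\{1,d(1/p-1/q)\}\bigr)$, nonempty exactly under the hypothesis $q>\frac{d}{d(1/p+1/r)-s}$ (together with $s-d/r<1$, a caveat already implicit in the paper's own sharpness statement).
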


\section{Applications to Time--Frequency Analysis: Gabor Systems}\label{sec:TFApp}

In this section, we first relate the Fourier multiplier property to basis properties of exponential systems in weighted spaces.  Then, we use the Zak transform to connect between Gabor systems and such exponential systems, and prove Balian--Low type theorems in this setting.

\subsection{Exponential systems in weighted spaces}\label{sec:BasisProps}
Let $w\in L^1(\T^d)$ be a nonnegative function. The corresponding weighted space $L^2_w(\T^d)$, is the Hilbert space which consists of all functions $f$ satisfying $\|f\|^2_{L^2_w}:=\int_{\T^d}|f|^2wdx<\infty$.
Consider the set of exponentials with integer frequencies
\[E= \{e_{n}\}_{n \in \Z^d}:=\{ e^{2\pi i \langle n,x \rangle}\}_{n \in \Z^d},\]
and note that, since $w\in L^1(\T^d)$, it both belongs to the space $L^2_w(\T^d)$ and is complete there.
In fact, many other basis properties of $E$ can be characterized in terms of the weight function $w$.  The following proposition lists a few of these.

\begin{proposition}[e.g. Theorem 10.10 in \cite{HPrimer}]\label{char}
Let $L^2_w(\T^d)$ and $E$ be as above. Then,
\begin{itemize}
	\item[i.] $E$ is a Riesz basis if and only if $w, 1/w \in L^\infty(\T^d)$.
	\item[ii.] $E$ is exact (complete and minimal) if and only if $1/w \in L^1(\T^d)$.
\end{itemize}
\end{proposition}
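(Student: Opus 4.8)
The plan is to reduce everything to elementary Hilbert--space facts about $E$ together with the identity
\[
\Big\| \sum_{n} a_n e_n \Big\|_{L^2_w(\T^d)}^2 = \int_{\T^d} \Big| \sum_n a_n e_n \Big|^2 w\, dx ,
\]
valid for every finitely supported $(a_n)$, i.e. for every trigonometric polynomial $P=\sum a_n e_n$, for which Parseval on $L^2(\T^d)$ also gives $\|P\|_{L^2(\T^d)}^2=\sum|a_n|^2$. Completeness of $E$ in $L^2_w(\T^d)$ is already recorded above (it follows from $w\in L^1$, $w\ge 0$: if $\langle f,e_n\rangle_{L^2_w}=0$ for all $n$ then $\widehat{fw}\equiv 0$, whence $fw=0$ a.e.\ and $f=0$ in $L^2_w$). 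Hence, using the characterization of Riesz bases given after \eqref{riesz-cond}, part (i) amounts to showing that the two--sided estimate \eqref{riesz-cond} holds for $E$ in $L^2_w(\T^d)$ if and only if $w,1/w\in L^\infty(\T^d)$, while part (ii) amounts to characterizing when $E$ is minimal in $L^2_w(\T^d)$.

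For part (i), the ``if'' direction is immediate: if $c\le w\le C$ a.e.\ then $c\|P\|_{L^2}^2\le \|P\|_{L^2_w}^2\le C\|P\|_{L^2}^2$ for every trigonometric polynomial $P$, which together with $\|P\|_{L^2}^2=\sum|a_n|^2$ is exactly \eqref{riesz-cond} with $A=c$, $B=C$. For the ``only if'' direction I would begin from the upper bound $\int_{\T^d}|P|^2w\le B\int_{\T^d}|P|^2$ for all trigonometric polynomials $P$; approximating an arbitrary $f\in L^2(\T^d)$ in $L^2$--norm by trigonometric polynomials, passing to an a.e.\ convergent subsequence and invoking Fatou's lemma yields $\int_{\T^d}|f|^2w\le B\|f\|_{L^2}^2$ for all $f\in L^2(\T^d)$; testing with $f=\chi_E$ gives $\int_E w\le B|E|$ for every measurable $E\subset\T^d$, hence $w\le B$ a.e. Once $w$ is known to be bounded one has $L^2_w\hookrightarrow L^2$ boundedly and $\int|f_k|^2w\to\int|f|^2w$ whenever $f_k\to f$ in $L^2$ (dominated convergence), so the lower bound $A\int|P|^2\le\int|P|^2w$ also extends to all $f\in L^2(\T^d)$; testing again with indicator functions gives $\int_E w\ge A|E|$, i.e.\ $w\ge A$ a.e., so $1/w\in L^\infty$.

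For part (ii), since $E$ is complete it is exact precisely when it is minimal, and a system in a Hilbert space is minimal if and only if it admits a biorthogonal system. If $1/w\in L^1(\T^d)$ then necessarily $w>0$ a.e., and the functions $g_m=e_m/w$ lie in $L^2_w$ (because $\|g_m\|_{L^2_w}^2=\int_{\T^d}1/w<\infty$) and satisfy $\langle e_n,g_m\rangle_{L^2_w}=\int_{\T^d}e_{n-m}\,dx=\delta_{nm}$, so $E$ is minimal. Conversely, if $E$ is minimal pick $g\in L^2_w$ with $\langle e_n,g\rangle_{L^2_w}=\delta_{n,0}$; then $h:=\overline{g}\,w\in L^1(\T^d)$ (Cauchy--Schwarz with the weight $w$, using $w\in L^1$) has Fourier coefficients $\widehat h(k)=\langle e_{-k},g\rangle_{L^2_w}=\delta_{k,0}$, so by uniqueness of Fourier coefficients on $L^1(\T^d)$ we get $h\equiv 1$, forcing $w>0$ a.e.\ and $\overline g=1/w$; thus $1/w=\overline g\in L^2_w$, i.e.\ $\int_{\T^d}1/w<\infty$, i.e.\ $1/w\in L^1(\T^d)$. (That minimality at the single frequency $0$ implies it at every frequency follows from the fact that multiplication by $e_m$ is unitary on $L^2_w$ and permutes $E$.)

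I expect the only genuinely delicate point to be the limiting argument in the ``only if'' half of part (i): the hypotheses furnish the inequalities only for trigonometric polynomials, so one must pass to general $L^2$---and ultimately to indicator---functions. The asymmetry there is worth emphasizing: Fatou immediately upgrades the \emph{upper} bound and yields boundedness of $w$, and only afterwards, with $w\in L^\infty$ in hand, does dominated convergence permit upgrading the \emph{lower} bound; trying to handle both at once would fail. Everything else is routine manipulation with Parseval's identity and the uniqueness theorem for Fourier coefficients on $L^1(\T^d)$.
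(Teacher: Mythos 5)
Your proof is correct, and it follows the standard argument (the one behind the cited Theorem 10.10 in Heil's primer, which the paper invokes without proof): boundedness of $w$ from above and below characterizes the Riesz basis property, and the explicit biorthogonal system $e_m/w$ characterizes exactness via $1/w\in L^1(\T^d)$. The care you take in upgrading the polynomial inequalities to all of $L^2(\T^d)$ — Fatou first for the upper bound, then the $L^1$-continuity of $f\mapsto\int|f|^2w$ once $w\in L^\infty$ is known — is exactly the right way to handle the only delicate step.
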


Recall that for $q\geq 2$, a system $\{f_n\}$ in a Hilbert space $H$ is a ($C_q$)--system
if there exists $C>0$ such that every $f\in H$ can be approximated arbitrarily well by a finite linear combination $\sum a_nf_n$ with $\|a_n\|_{\ell^q}\leq C\|f\|_H$. Further, recall that
$\{f_n\}$ is an exact ($C_q$)--system if and only if it is complete and there exists $D>0$ such that
\begin{equation}\label{Cq-cond-1}
D \left(\sum|a_n|^q\right)^{\frac{1}{q}}\leq \left\|\sum a_nf_n \right\|_H,
\end{equation}
for any finite sequence $\{a_n\}$. We note that the system $E$ is exact in $L^2_w(\T^d)$ if and only if it is an exact ($C_{\infty}$)--system, while it is a Riesz basis in the space if and only if it is a (Bessel) exact ($C_2$)--system there. In the following proposition, we extend the characterizations from Proposition \ref{char} to exact $(C_q)$--systems for all $2\leq q\leq\infty$.

\begin{proposition}\label{prop:MinChar}
Let $L_w^2(\T^d)$ and $E$ be as above.
Then,  $E$ is an exact $(C_q)$--system for $L^2_w(\T^d)$ if and only if  $w^{-1/2} \in \mathcal{M}_{2}^{q}$.
\end{proposition}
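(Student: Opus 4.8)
The plan is to unwind both conditions into concrete inequalities on finite trigonometric sums and observe that they are literally the same statement. First I would record the key identity connecting the weighted space to the multiplier: for a finite sequence $a = (a_n)$, the element $f = \sum a_n e_n$ satisfies
\[
\|f\|_{L^2_w}^2 = \int_{\T^d} \Big| \sum a_n e_n \Big|^2 w \, dx = \big\| w^{1/2} \textstyle\sum a_n e_n \big\|_{L^2(\T^d)}^2 = \big\| w^{1/2} \mathcal{F}^{-1} a \big\|_{L^2(\T^d)}^2,
\]
since $\mathcal{F}^{-1}$ of the finitely supported sequence $a$ is exactly the trigonometric polynomial $\sum a_n e_n$. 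So the lower frequency-control bound \eqref{Cq-cond-1}, i.e. $D\|a\|_{\ell^q} \le \|f\|_{L^2_w}$, is equivalent to
\[
\|a\|_{\ell^q(\Z^d)} \lesssim \big\| w^{1/2} \mathcal{F}^{-1} a \big\|_{L^2(\T^d)} \qquad \text{for all finitely supported } a.
\]

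Next I would connect the right-hand side to the multiplier $u = w^{-1/2}$. Writing $b = \mathcal{F}(w^{1/2}\mathcal{F}^{-1}a)$, we have $w^{1/2}\mathcal{F}^{-1}a = \mathcal{F}^{-1}b$, hence $a = \mathcal{F}(w^{-1/2}\mathcal{F}^{-1}b) = T_{w^{-1/2}} b$, and $\|w^{1/2}\mathcal{F}^{-1}a\|_{L^2} = \|b\|_{\ell^2}$ by Parseval. Thus the displayed inequality reads $\|T_{w^{-1/2}} b\|_{\ell^q} \lesssim \|b\|_{\ell^2}$, which — once one checks the relevant class of $b$'s is dense enough — is precisely the statement $w^{-1/2} \in \mathcal{M}_2^q$. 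So the equivalence "(\ref{Cq-cond-1}) holds" $\iff$ "$w^{-1/2}\in\mathcal{M}_2^q$" is essentially a change of variables. Combined with the fact (stated just before the proposition, from \cite{NIT}, Theorem 3 in \cite{NO1}) that $E$ is an exact $(C_q)$-system iff it is complete and satisfies \eqref{Cq-cond-1}, and with completeness of $E$ in $L^2_w(\T^d)$ being automatic from $w\in L^1$, the proposition follows.

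The step requiring the most care is the domain/density bookkeeping in both directions. For the forward direction one must verify that the inequality $\|T_u b\|_{\ell^q}\lesssim\|b\|_{\ell^2}$ obtained only for $b$ of the special form $\mathcal{F}(w^{1/2}\mathcal{F}^{-1}a)$ with $a$ finitely supported actually extends to all of $\ell^2(\Z^d)$ — equivalently, that the trigonometric polynomials are dense in the relevant sense and that $T_u$ is well-defined; here one uses $w\in L^1$ so that $w^{1/2}\in L^2(\T^d)$ and products $w^{1/2}\cdot(\text{polynomial})$ are in $L^2$. For the reverse direction one similarly needs that membership $w^{-1/2}\in\mathcal{M}_2^q$, applied to $b$'s of the above form, gives back \eqref{Cq-cond-1} for \emph{all} finite sequences $a$, plus the observation that $w^{-1/2}\in\mathcal{M}_2^q\subset L^2(\T^d)$ forces $1/w\in L^1(\T^d)$ so that, by Proposition \ref{char}(ii), $E$ is automatically exact — giving completeness and minimality for free. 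Modulo these routine approximation arguments, the proof is a direct dictionary translation between the two conditions.
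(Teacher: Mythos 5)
Your proposal is correct and follows essentially the same route as the paper: both unwind the multiplier condition and the $(C_q)$ lower bound into the identical inequality $\|\widehat{f}\|_{\ell^q}\lesssim\|f\|_{L^2_w}$ on trigonometric polynomials (your substitution $b=\mathcal{F}(w^{1/2}\mathcal{F}^{-1}a)$ is just the explicit form of the paper's reformulation), and then extend by a density/limiting argument, using Proposition \ref{char} to handle $w\neq 0$ a.e. and exactness.
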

\begin{proof}
We first note, as we have seen in previous proofs, that the condition $w^{-1/2} \in \mathcal{M}_{2}^{q}$ can be reformulated as follows: $w\neq 0$ almost everywhere and,
\begin{equation}\label{Cq-cond-2}
D \|\widehat{f}\|_{\ell^q(\Z^d)}\leq \|f\|_{L_w^2(\T^d)},\qquad  \forall f\in L_w^2(\T^d)
\end{equation}
where $D$ is some positive constant. Now, assume first that $w^{-1/2} \in \mathcal{M}_{2}^{q}$. Then, for sums of the form $f=\sum a_ne_n$, the conditions in (\ref{Cq-cond-2}) is exactly the same as the condition in (\ref{Cq-cond-1}). Since the system $E$ is complete in the space, it follows that it is an exact $(C_q)$--system there. Conversely, assume that $E$ is an exact $(C_q)$--system  in the space then, in particular, Proposition \ref{char} implies that $w\neq 0$ almost everywhere. Moreover, for sums of the form $f=\sum a_ne_n$ the condition in (\ref{Cq-cond-2}) is implied by the condition in  (\ref{Cq-cond-1}).
By a usual limiting procedure (\ref{Cq-cond-2}) holds for all $f\in L_w^2(\T^d)$, which implies that $w^{-1/2} \in \mathcal{M}_{2}^{q}$. This completes the proof.
\end{proof}

\subsection{Gabor systems}\label{subsec:Gabor}

In this subsection we prove the following nonsymmetric generalization of Theorem \ref{thm:CqGabor}. Its novelty is in obtaining end point results for conditions previously found in \cite{NO1} (see  Theorem 2 there).

\begin{theorem}[Nitzan, Northington, Powell]\label{thm:nonsymmetricBLT}
Let $2<q<\infty$, $q'=q/(q-1)$, and $t\geq r>{4(q-1)}/{(q+2)}$ be such that
$1/{r}+ 1/{t} \le {q'}/{2}.$
If $G(g)$ is an exact $(C_q)$--system for $L^2(\R)$, then either
\begin{align}\label{eqn:CqGaborBLTIntegralsNonsymmetric}
	\int_{\R} |x|^{r} |g(x)|^2 dx = \infty \text{   or   } \int_{\R} |\xi|^{t} |\widehat{g}(\xi)|^2 d\xi = \infty.
\end{align}
The theorem also holds with $r$ and $t$ interchanged in \eqref{eqn:CqGaborBLTIntegralsNonsymmetric}.
\end{theorem}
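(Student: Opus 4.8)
\emph{Strategy.} The plan is to transfer the question to the torus $\T^2$ by the Zak transform and then apply the anisotropic multiplier result, Theorem~\ref{thm:nonsymMult} (in the localized form of Remark~\ref{localnonsym}), to the weight $w_0=|Zg|$. Recall that the Zak transform $Zg(x,\xi)=\sum_{n\in\Z}g(x+n)e^{2\pi i n\xi}$ is unitary from $L^2(\R)$ onto $L^2(\T^2)$ and satisfies $Z\bigl(e^{2\pi i m\cdot}g(\cdot-n)\bigr)(x,\xi)=e^{2\pi i(mx+n\xi)}Zg(x,\xi)$, so that it carries $G(g)$ onto the exponential system $E=\{e^{2\pi i(mx+n\xi)}\}_{(m,n)\in\Z^2}$ viewed inside the weighted space $L^2_w(\T^2)$, $w=|Zg|^2$. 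Hence $G(g)$ is an exact $(C_q)$--system for $L^2(\R)$ if and only if $E$ is an exact $(C_q)$--system for $L^2_w(\T^2)$, which by Proposition~\ref{prop:MinChar} (with $d=2$) is equivalent to $w^{-1/2}=|Zg|^{-1}\in\mathcal{M}_2^q$. Thus it suffices to prove: if both integrals in \eqref{eqn:CqGaborBLTIntegralsNonsymmetric} are finite, then $|Zg|^{-1}\notin\mathcal{M}_2^q$.

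Next I would record the standard dictionary between the weighted $L^2$--decay of $g$ and $\widehat g$ and the anisotropic smoothness of $Zg$. Since for fixed $x\in[0,1)$ the Fourier coefficients of $\xi\mapsto Zg(x,\xi)$ are $(g(x+n))_{n\in\Z}$, Parseval gives
\[
\sum_{(j,k)\in\Z^2}|k|^{r}\,|\widehat{Zg}(j,k)|^2\ \asymp\ \sum_{n\in\Z}|n|^{r}\int_0^1|g(x+n)|^2\,dx ,
\]
which is finite precisely when $\int_{\R}|x|^{r}|g(x)|^2\,dx<\infty$; and because $Z$ intertwines the Fourier transform on $L^2(\R)$ with a rotation of coordinates on $\T^2$, the analogous statement with the two coordinates exchanged shows that $\int_{\R}|\xi|^{t}|\widehat g(\xi)|^2\,d\xi<\infty$ controls the smoothness of $Zg$ in the first variable at level $t/2$. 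Consequently, if both integrals converge, then $Zg\in H^{\vec{s}}(\T^2)$ with $\vec{s}=(t/2,r/2)$; we may assume $t\ge r$, the case $t<r$ being the ``interchanged'' statement. Since $\ell(\vec{s})=2/t+2/r\le q'<2$, Lemma~\ref{lem:nonsymCont} gives $Zg\in C(\T^2)$, and the classical quasi--periodicity relations $Zg(x+1,\xi)=e^{2\pi i\xi}Zg(x,\xi)$, $Zg(x,\xi+1)=Zg(x,\xi)$ force $Zg$ to vanish at some $z_0\in\T^2$ (a winding--number argument). Applying Theorem~\ref{thm:nonsymSobolevEmb} coordinatewise and telescoping as in the proof of Theorem~\ref{thm:nonsymMult}, we obtain near $z_0$ the bound $|Zg(x)|=|Zg(x)-Zg(z_0)|\le R(|x-z_0|)\sum_{j=1}^2|x_j-(z_0)_j|^{\alpha_j}$ with $R(\tau)\to0$ as $\tau\to0$; that is, $w_0=|Zg|$ has a zero at $z_0$ and satisfies there the mixed--H\"older bound appearing in Remark~\ref{localnonsym}.

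At this point Remark~\ref{localnonsym} (the local form of Theorem~\ref{thm:nonsymMult}) applies to $w_0=|Zg|$ with $u=w_0^{-1}=|Zg|^{-1}$: a function with a zero satisfying that mixed--H\"older bound cannot have $u=1/w_0\in\mathcal{M}_2^q$ once $0<\alpha_1,\alpha_2<1$ and $q\le\ell(\vec{s})/(\ell(\vec{s})-1)$. The latter inequality unwinds to $\ell(\vec{s})=2/r+2/t\le q'$, i.e.\ $1/r+1/t\le q'/2$, which is precisely the hypothesis; writing $\theta=1-1/r-1/t$, positivity of the $\alpha_j$ is the condition $\theta>0$, again guaranteed since $1/r+1/t\le q'/2<1$. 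The only delicate point is the upper bound $\alpha_j<1$, which can fail if $\widehat g$ decays so rapidly that $t$ is large. In that case one first weakens the hypothesis, replacing $(r,t)$ by a pair $(r_0,t_0)$ with $r_0\le r$ and $t_0\le t$ (so the two decay assumptions are retained) for which $1/r_0+1/t_0\le q'/2$ still holds \emph{and} the associated exponents $\alpha_1,\alpha_2$ lie in $(0,1)$; a direct computation---taking, for instance, $1/r_0+1/t_0=q'/2$, which forces $\theta_0=1-q'/2$ and $\alpha_1=t_0\theta_0/2$---shows that such a pair exists exactly when $r>4(q-1)/(q+2)$. Running the above argument with this reduced tuple yields $|Zg|^{-1}\notin\mathcal{M}_2^q$, which together with the first paragraph proves the theorem; exchanging the roles of $r$ and $t$ gives its last sentence.

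\emph{Main obstacle.} The heart of the proof is the reduction to a torus multiplier statement; the two points that require real care are (i) the precise dictionary between the weighted $L^2$--integrals of $g$ and $\widehat g$ and the anisotropic Bessel potential norm of $Zg$, and (ii) the exponent bookkeeping needed to force $\alpha_1,\alpha_2$ into $(0,1)$ while keeping $\ell(\vec{s})\le q'$---this is exactly where the restriction $r>4(q-1)/(q+2)$ is used. The unitarity and intertwining property of $Z$, the winding--number zero of a continuous Zak transform, and Proposition~\ref{prop:MinChar} are all standard.
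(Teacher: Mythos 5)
Your overall route is the same as the paper's: Zak transform plus Proposition~\ref{prop:MinChar} to convert the $(C_q)$--hypothesis into $|Zg|^{-1}\in\mathcal{M}_2^q$, quasi--periodicity to produce a zero, Theorem~\ref{thm:nonsymSobolevEmb} to get the mixed H\"older bound at that zero, and Remark~\ref{localnonsym} to derive the contradiction; your bookkeeping for reducing to $1/r+1/t=q'/2$ and using $r>4(q-1)/(q+2)$ to force $\alpha_j\in(0,1)$ also matches the paper's computation with the intersection of the lines $u+v=q'/2$ and $u+3v=1$.

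There is, however, one genuine gap: the claim that $Zg\in H^{\vec s}(\T^2)$ \emph{globally}. The Zak transform is only quasi--periodic, $Zg(x+1,\xi)=e^{2\pi i\xi}Zg(x,\xi)$, so the $1$--periodic extension of $Zg|_{[0,1)^2}$ is discontinuous across the lines $x\in\Z$ wherever $e^{2\pi i\xi}\neq 1$. Concretely, for fixed $k$ the coefficients $\widehat{Zg}(j,k)=\int_0^1 g(x-k)e^{-2\pi i jx}\,dx$ are the Fourier coefficients of the (generically discontinuous) periodization of $g(\cdot-k)\chi_{[0,1)}$, so they decay only like $|j|^{-1}$ and $\sum_j |j|^{2s_1}|\widehat{Zg}(j,k)|^2=\infty$ as soon as $s_1\ge 1/2$ --- which is always the case here since $t/2>2(q-1)/(q+2)>1/2$ for $q>2$. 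Your Parseval identity for the $k$--sum is correct, but the ``rotation of coordinates'' intertwining you invoke for the first variable carries the non--periodic phase $e^{2\pi i x\xi}$ and does not yield global membership. The correct and sufficient statement is the local one: near each point, after multiplying by a suitable phase, $Zg$ agrees with a function in $H^{(s_1,s_2)}(\T^2)$; this is exactly Proposition~\ref{local} (quoted from \cite{NO1}), and since Remark~\ref{localnonsym} only needs the mixed H\"older bound in a neighbourhood of the zero, substituting that local statement repairs your argument and makes it coincide with the paper's proof.
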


In \cite{Gautam}, S. Z. Gautam showed that the above result holds when ``exact $(C_q)$--system" is replaced by ``Riesz basis" where $q$ is replaced by $2$ in the parameters restrictions above. Similarly, in \cite{HPExact}, C. Heil and A. M. Powell proved that the above result holds for exact systems where the range of parameters corresponds to $q=\infty$ in the inequalities above.

It follows from Theorem 2 in \cite{NO1} that \eqref{eqn:CqGaborBLTIntegralsNonsymmetric} holds when $(r,t)$ falls into a certain region in the plane. Theorem \ref{thm:nonsymmetricBLT} extends this result to include part of the boundary of this region (which corresponds to the interval GF in Figure 1 in \cite{NO1}).  Theorem 2 in \cite{NO1} shows  also that this result is  sharp in the sense that it doesn't hold for $1/r+1/t >q'/2$.  Note that when $r=t$, we have $r=4/q'$, which is exactly the result in Theorem \ref{thm:CqGabor}.

Our proof of Theorem \ref{thm:nonsymmetricBLT} relies on properties of the Zak transform. For a continuous function $g$ with sufficient localization The Zak transform is defined by
\[ Zg(x,y)= \sum_{k \in \Z} g(x-k) e^{2\pi i k y},\]
and extended to a unitary operator from $L^2(\R)$ to $L^2([0,1]^2)$ in the standard way.
Note that $Zg$ is quasi--periodic in the sense that
\begin{equation}\label{eq:ZacProperties} Zg(x,y+1)= Zg(x,y), \indent Zg(x+1,y)=e^{2\pi i y} Zg(x,y).\end{equation}
In particular, $|Zg|$ is a periodic function. This quasi--periodicity property implies that if $Zg$ is continuous, then it must have a zero (see e.g.  \cite{G}).

It is readily checked that if $g\in L^2(\R)$ and $G(g)$ is complete in $L^2(\R)$, then the mapping $U_g:L^2(\R)\rightarrow L^2_{|Zg|^2}(\T^2)$ defined by
\[ U_g h = \frac{Zh}{Zg}\]
is an isometric isomorphism, and the image of $G(g)$ under $U_g$ is the system $E$  (see e.g.\cite{NO1}).  Combining this with Proposition \ref{prop:MinChar} yields the following.
\begin{proposition}\label{prop:CqGaborFM}
Let $q\geq 2$ and $g \in L^2(\R)$.  Then $G(g)$  is an exact $(C_q)$--system in $L^2(\R)$ if and only if $\frac{1}{|Zg|} \in \mathcal{M}_2^q$.
\end{proposition}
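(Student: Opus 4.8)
The plan is to read the statement off from the two facts recalled immediately above it: the Zak transform identification, in which $U_g h = Zh/Zg$ is an isometric isomorphism of $L^2(\R)$ onto the weighted space $L^2_{|Zg|^2}(\T^2)$ taking the Gabor system $G(g)$ onto the exponential system $E$, together with Proposition \ref{prop:MinChar}, which asserts that $E$ is an exact $(C_q)$-system for a weighted space $L^2_w(\T^d)$ precisely when $w^{-1/2}\in\mathcal{M}_2^q$. Taking $w=|Zg|^2$, the condition $w^{-1/2}\in\mathcal{M}_2^q$ is literally $1/|Zg|\in\mathcal{M}_2^q$, so the whole matter reduces to transporting the property ``exact $(C_q)$-system'' across the isometry $U_g$.

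Concretely, I would first dispose of the degenerate possibility that $|Zg|$ vanishes on a set of positive measure, in order to know that $U_g$ is genuinely available. If $1/|Zg|\in\mathcal{M}_2^q$, then since $\mathcal{M}_2^q\subset\mathcal{M}_2^\infty=L^2(\T^2)$ we get $1/|Zg|\in L^2(\T^2)$, and in particular $Zg\neq 0$ almost everywhere; by the standard Zak transform criterion for completeness, which uses only the quasi-periodicity \eqref{eq:ZacProperties} of $Zg$ (cf.\ \cite{G}), this forces $G(g)$ to be complete in $L^2(\R)$. On the other hand, if $G(g)$ is an exact $(C_q)$-system then it is in particular complete, hence again $Zg\neq 0$ a.e. In either direction we may therefore assume $G(g)$ complete, which is exactly the hypothesis under which $U_g$ is the stated isometric isomorphism satisfying $U_g(G(g))=E$. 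It then remains to observe that ``exact $(C_q)$-system'' is invariant under surjective Hilbert space isometries: completeness is preserved, and the defining lower bound $D\,\|(a_n)\|_{\ell^q}\le\|\sum a_n f_n\|_H$ of \eqref{Cq-cond} is preserved because $U_g$ maps finite linear combinations of $G(g)$ to finite linear combinations of $E$ with the same coefficients and does not change norms. Applying this to $U_g\colon L^2(\R)\to L^2_{|Zg|^2}(\T^2)$ shows that $G(g)$ is an exact $(C_q)$-system for $L^2(\R)$ if and only if $E$ is one for $L^2_{|Zg|^2}(\T^2)$, and Proposition \ref{prop:MinChar} with $w=|Zg|^2$ then yields the claim.

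I do not expect a genuine obstacle: the substantive content is already contained in the Zak transform identity and in Proposition \ref{prop:MinChar}. The only points requiring a line of care are bookkeeping ones, namely checking that the degenerate-case argument handles both implications symmetrically, so the equivalence is not broken on a zero set of $|Zg|$, and checking that the limiting step in the proof of Proposition \ref{prop:MinChar}, which extends the bound from finite sums $\sum a_n e_n$ to all of $L^2_w$, survives pullback along $U_g$; it does, since $U_g$ is a surjective isometry carrying $G(g)$ and $E$ onto one another.
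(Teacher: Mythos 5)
Your proposal is correct and follows essentially the same route as the paper, which derives the proposition by combining the isometric isomorphism $U_g h = Zh/Zg$ (carrying $G(g)$ onto $E$ in $L^2_{|Zg|^2}(\T^2)$) with Proposition \ref{prop:MinChar} applied to $w=|Zg|^2$. Your additional care with the degenerate case (using $\mathcal{M}_2^q\subset L^2$ and the a.e.\ nonvanishing criterion for completeness to justify the availability of $U_g$ in both directions) is a detail the paper leaves implicit, and it is handled correctly.
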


The next proposition relates time--frequency properties of a function $g\in L^2(\R)$ to smoothness properties of its Zak transform. This proposition is proved in  \cite{NO1} (Lemma 5), though stated there in a different terminology. In particular, in  \cite{NO1}, Sobolev conditions over $\R^2$ were considered, the corresponding conditions over $\T^2$ may be obtained by means of the Parseval equality.

\begin{proposition}[\cite{NO1}]\label{local}
Let $g\in L^2(\R)$ be such that both integrals in (\ref{eqn:CqGaborBLTIntegralsNonsymmetric}) are finite for $g$. Denote $s_1=r/2$ and $s_2=t/2$. Then, for every $x\in \R^2$ there exist $\tau>0$ and $u\in  H^{(s_1,s_2)}(\T^d)$ (considered as a periodic function over $\R^2$) so that $Zg=u$ over $B_{\tau}(x)$.
\end{proposition}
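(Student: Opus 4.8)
The plan is to follow the argument of \cite{NO1} (Lemma 5): near the prescribed point I would manufacture a genuinely $\Z^2$--periodic function by periodising, in the translation variable, a windowed slice of the Zak transform, and then verify the anisotropic membership $u\in H^{(s_1,s_2)}(\T^2)$ by a Parseval computation that converts the two moment hypotheses on $g$ and $\widehat g$ into bounds on the two halves of the $\dot H^{(s_1,s_2)}$--seminorm (with the pairing of the exponents $r/2,t/2$ to the two variables following the normalisation of the Zak transform).

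First I would reduce to a point whose first coordinate is near $0$. Given $x_0=(a,b)$, pick $m\in\Z$ with $|a-m|<\tfrac14$; since $Zg$ is $1$--periodic in $y$ and $Zg(x+1,y)=e^{2\pi iy}Zg(x,y)$, if $u_0\in H^{(s_1,s_2)}(\T^2)$ is $\Z^2$--periodic and coincides with $Zg$ near $(a-m,b)$, then $u(x,y):=e^{2\pi imy}u_0(x-m,y)$ is $\Z^2$--periodic, coincides with $Zg$ near $x_0$ (iterate the quasi--periodicity relation $m$ times), and still lies in $H^{(s_1,s_2)}(\T^2)$: integer translation only rotates Fourier coefficients, and multiplication by the trigonometric monomial $e^{2\pi imy}$ shifts the second Fourier index by $m$, replacing $|k|^{2s_2}$ by $|k+m|^{2s_2}\lesssim_m 1+|k|^{2s_2}$. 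So I may assume $|a|<\tfrac14$. Next I would fix $\phi\in C^\infty_c(\R)$ with $0\le\phi\le1$, $\phi\equiv1$ on $[-\tfrac14,\tfrac14]$ and $\operatorname{supp}\phi\subset(-\tfrac12,\tfrac12)$; for $k\in\Z$ set $h_k(x)=\phi(x)g(x-k)\in L^2(\R)$ (supported in $(-\tfrac12,\tfrac12)$), let $g_k$ be its $1$--periodisation, and put $u(x,y)=\sum_{k\in\Z}g_k(x)e^{2\pi iky}$. The series converges in $L^2(\T^2)$ since $\|u\|_{L^2(\T^2)}^2=\sum_k\|h_k\|_{L^2(\R)}^2\lesssim\|g\|_{L^2(\R)}^2$, $u$ is $\Z^2$--periodic, and for $x\in(-\tfrac14,\tfrac14)$ one has $g_k(x)=\phi(x)g(x-k)=g(x-k)$, so $u(x,y)=\sum_kg(x-k)e^{2\pi iky}=Zg(x,y)$; taking $\tau$ with $B_\tau(x_0)\subset(-\tfrac14,\tfrac14)\times\R$ gives $u=Zg$ on $B_\tau(x_0)$.

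It then remains to check $u\in H^{(s_1,s_2)}(\T^2)$. Here $\widehat u(j,k)=\widehat{g_k}(j)$, and since $h_k$ is supported strictly inside a fundamental domain, $\|g_k\|_{W^{s,2}(\T)}\lesssim\|h_k\|_{W^{s,2}(\R)}$ (cf.\ \eqref{torus-from-euclidean}) with $W^{s,2}=H^s$. The $\dot H^{(s_1,s_2)}$--seminorm of $u$ is the sum of $\sum_k\sum_j|j|^{2s_1}|\widehat{g_k}(j)|^2$ and $\sum_k|k|^{2s_2}\sum_j|\widehat{g_k}(j)|^2$, with $\{s_1,s_2\}=\{r/2,t/2\}$. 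The modulation--index sum equals $\sum_k|k|^{2s_2}\|h_k\|_{L^2(\R)}^2\lesssim\|g\|_{L^2(\R)}^2+\int_\R|x|^{2s_2}|g(x)|^2\,dx$, using $|k|\asymp|x-k|$ on $\operatorname{supp}h_k$; this is the $g$--decay half and is finite once the exponent appearing there is $r/2$, i.e.\ by $\int_\R|x|^r|g|^2<\infty$. The translation--index sum is $\lesssim\sum_k\|h_k\|_{H^{s_1}(\R)}^2=\sum_k\|\phi(\cdot+k)g\|_{H^{s_1}(\R)}^2\lesssim\|g\|_{H^{s_1}(\R)}^2\asymp\|g\|_{L^2(\R)}^2+\int_\R|\xi|^{2s_1}|\widehat g(\xi)|^2\,d\xi$, the middle step being the standard localisation estimate for the boundedly overlapping smooth partition of unity $\{\phi(\cdot+k)\}_{k\in\Z}$; this is the $\widehat g$--decay half and is finite by $\int_\R|\xi|^t|\widehat g|^2<\infty$ (using $r\le t$, so $|\xi|^{2s_1}\lesssim1+|\xi|^t$). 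Together with $u\in L^2(\T^2)$ this gives $u\in H^{(s_1,s_2)}(\T^2)$.

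The step I expect to be the main obstacle is the translation--index half of the seminorm: it pushes the problem onto the Fourier side of $g$, where one must show that the fixed window $\phi$ does not destroy fractional smoothness --- the localisation inequality $\sum_k\|\phi(\cdot+k)g\|_{H^{s}(\R)}^2\lesssim\|g\|_{H^{s}(\R)}^2$ --- and where one must correctly match each moment hypothesis to the corresponding coordinate. Here the Zak--Fourier intertwining $Z\widehat g(x,y)=e^{2\pi ixy}Zg(y,-x)$, which swaps the translation and modulation variables up to the harmless smooth factor $e^{2\pi ixy}$, is the clean device for this matching, and it explains why $\int|\xi|^t|\widehat g|^2$ governs smoothness in one variable while $\int|x|^r|g|^2$ governs it in the other. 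The remaining ingredients --- the quasi--periodicity reduction and the arithmetic of the periodisation --- should be routine.
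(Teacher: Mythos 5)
Your construction --- window $g(\cdot-k)$ by a fixed cutoff $\phi$, periodise in the translation variable, and read the anisotropic seminorm off $\widehat u(j,k)=\widehat{g_k}(j)$ by Parseval --- is exactly the intended argument: the paper does not reprove this proposition but cites Lemma 5 of \cite{NO1}, and your proof is that lemma transported to $\T^2$ via \eqref{torus-from-euclidean}. The quasi--periodicity reduction, the identification $u=Zg$ on $(-\tfrac14,\tfrac14)\times\T$, and both halves of the Parseval computation are sound, and the localisation inequality $\sum_k\|\phi(\cdot+k)g\|_{H^{s}(\R)}^2\lesssim\|g\|_{H^{s}(\R)}^2$ that you single out as the main obstacle is standard for every $s\ge 0$ (Leibniz for integer $s$, interpolation in between), so it is not a real obstacle.

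The one point you must commit to, rather than leave as ``$\{s_1,s_2\}=\{r/2,t/2\}$'' with a gesture at the Zak--Fourier intertwining, is the pairing of exponents with coordinates. With the paper's $Zg(x,y)=\sum_k g(x-k)e^{2\pi iky}$, the coefficient of $e^{2\pi iky}$ is $g(x-k)$, so the modulation--index half $\sum_k|k|^{2s}\|h_k\|_{L^2(\R)}^2$ is finite precisely when $\int_{\R}|x|^{2s}|g(x)|^2\,dx<\infty$, i.e.\ for $s\le r/2$, while the translation--index half is controlled by $\|g\|_{H^{s}(\R)}$, i.e.\ by $\int_{\R}|\xi|^{2s}|\widehat g(\xi)|^2\,d\xi$, finite for $s\le t/2$. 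Your construction therefore yields $u\in H^{(t/2,\,r/2)}(\T^2)=H^{(s_2,s_1)}(\T^2)$, not the literal $H^{(s_1,s_2)}(\T^2)$ of the statement; the literal ordered claim would force the modulation--variable smoothness $t/2$, hence $\int|x|^{t}|g|^2<\infty$, which is strictly stronger than the hypothesis when $t>r$. This is a transposition in the statement (an artifact of translating \cite{NO1}'s terminology), not a defect of your argument: the only downstream use, through Theorem \ref{thm:nonsymSobolevEmb} and Remark \ref{localnonsym} in the proof of Theorem \ref{thm:nonsymmetricBLT}, depends only on the symmetric quantities $\ell(\vec{s})=1/s_1+1/s_2$ and the unordered pair $\{\alpha_1,\alpha_2\}$, so the swap is harmless. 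State the swapped version explicitly, drop the hedge, and the proof is complete; the intertwining identity is then not needed at all.
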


\begin{proof}[Proof of Theorem \ref{thm:nonsymmetricBLT}]

{First, we note that it is enough to prove the theorem for $t\geq r>{4(q-1)}/{(q+2)}$ satisfying
\begin{equation}\label{for ell}
\frac{1}{r}+ \frac{1}{t} = \frac{q'}{2}.
\end{equation}
As the lines $u+v=q'/2$ and $u+3v=1$ intersect at $u=(q+2)/4(q-1)$, we have that for all $r$ and $t$ at the prescribed range:
\begin{equation}\label{for alpha}
\frac{1}{r}+\frac{3}{t}>1.
\end{equation}
Now, suppose for a contradiction that both integrals in \eqref{eqn:CqGaborBLTIntegralsNonsymmetric} are finite, and denote $r=2s_1$ and $t=2s_2$. It follows from  Proposition \ref{local} and Theorem \ref{thm:nonsymSobolevEmb} that $Zg$ is continuous and therefore that $|Zg|$  has a zero in $[-1/2, 1/2)^2$. Without loss of generality, we may assume that $Zg(0)=0$. Note that condition (\ref{for ell}) implies that $\ell:=1/s_1+1/s_2=q'$, while condition (\ref{for alpha}) implies that $0< \alpha_j=s_j( 1-\ell/{2})<1$ for $j=1,2$. It therefore follows from Proposition \ref{local} and Theorem \ref{thm:nonsymSobolevEmb} that in a neighbourhood of $0$,
\begin{equation}\label{mixed holder-2}
	|Zg(x,y)|\le R(|(x,y)|) \big(|x|^{\alpha_1}+|y|^{\alpha_2}\Big),
\end{equation}
where $R(\tau)$ tends to zero as $\tau$ tends to zero.}

As $G(g)$ is an exact $(C_q)$--system, Proposition \ref{prop:CqGaborFM} implies that $\frac{1}{|Zg|} \in \mathcal{M}_2^q(\T^d)$. This, combined with the conditions in (\ref{mixed holder-2}), contradicts the conclusion of Remark \ref{localnonsym}.
\end{proof}

\section{Applications to Time--Frequency Analysis: Shift--Invariant spaces}\label{subsec:SIS}

\subsection{Exponentials in multi--variable weighted spaces}\label{sec:BasisProps-2}

Let $W$ be a $\Z^d$--periodic, $K\times K$ matrix valued function, which is positive--definite for almost every $x\in \T^d$ and for which
\begin{equation}\label{eqn:traceInt}
 \text{tr}(W) \in L^1(\T^d).
\end{equation}
We refer to such a $W$ as a matrix valued weight function. We denote by $L^2_W(\T^d)$ the space of all vector--valued functions $\psi=(\psi_1, \psi_2,..., \psi_K)^T$, with $\psi_k$ defined on $\T^d$, which satisfy
\[\|\psi\|^2_{L^2_W(\T^d)} = \int_{\T^d}\langle W\psi, \psi\rangle dx<\infty.\]
 With the implied inner product, $L^2_W(\T^d)$ is a Hilbert space.

Let $e_k$ denote the $k^{th}$ canonical (column) basis vector in $\mathbb{C}^d$, and define
\[E(K)=\{e_k e^{2\pi i \langle n, x \rangle}\}_{k \in \{1,...,K\}, n \in \Z^d}= \{e_{k,n}\}_{k \in \{1,...,K\}, n \in \Z^d}.\]
It is readily checked that condition \eqref{eqn:traceInt} is both necessary for $E(K)$ to be a subset of $L^2_W(\T^d)$ and sufficient for $E(K)$ to be complete in $L^2_W(\T^d)$. Such spaces were previously studied in \cite{HSWW}.

Part (i) of Proposition \ref{char} is extended in \cite{RS} to the multi--variable case (see Theorem 2.3.6). It is proved there that $E(K)$ forms a Riesz basis in $L^2_W(\T^d)$ if and only if each eigenvalue function, $\zeta_k$, of $W$ satisfies $0<A\le \zeta_k(x)\le B<\infty$ for almost every $x\in \T^d$. Keeping Theorem \ref{thm:KMultEig} in mind, we conclude that  $E(K)$ forms a Riesz basis in $L^2_W(\T^d)$ if and only if $W(x)$ is uniformly bounded in norm, invertible for almost every $x$, and $W^{-1/2} \in \MM_{2}^{2}(K)$. Our next goal is to obtain a similar extension of Proposition \ref{prop:MinChar}. We will use the following lemma.

\begin{lemma}\label{stupidchar}
Let $W$ be a $K \times K$ matrix valued weight function. If $E(K)$ is exact in $L^2_W(\T^d)$ then $W^{-1}(x)$ is defined for almost every $x$, and is a measurable matrix valued function.
\end{lemma}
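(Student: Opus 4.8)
The plan is to argue by contradiction: suppose the set $S = \{x \in \T^d : W(x) \text{ is singular}\}$ has positive Lebesgue measure, and produce a nonzero element of $L^2_W(\T^d)$ that lies in the closed span of all but one of the $e_{k,n}$, contradicting minimality (hence exactness) of $E(K)$. First I would recall that $W$ is Hermitian and positive semidefinite a.e., so by the measurable diagonalization $W = V^*\Lambda V$ (as cited from Lemma 2.3.5 in \cite{RS}) the eigenvalues $\zeta_1(x) \ge \cdots \ge \zeta_K(x) \ge 0$ are measurable, and $W(x)$ is singular precisely when $\zeta_K(x) = 0$, i.e. $S = \{x : \zeta_K(x) = 0\}$ up to a null set. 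On $S$ one can choose, measurably, a unit null-vector $v(x) \in \ker W(x)$ (take the last column of $V^*(x)$, which is measurable).

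The key step is to use this null field to build a forbidden dependency. Consider the vector-valued function $\phi(x) = \chi_S(x)\, v(x) \cdot e^{-2\pi i \langle n_0, x\rangle}$ for some fixed frequency $n_0$; since $\langle W\phi,\phi\rangle = \chi_S |v|^2 \zeta_K = 0$ a.e., we have $\phi = 0$ in $L^2_W(\T^d)$, i.e. the zero element. Expanding $\chi_S(x) v(x)$ formally into its (vector) Fourier series $\sum_{k,n} c_{k,n} e_{k,n}$, the vanishing of $\phi$ in $L^2_W$ says exactly that $\sum_{k,n} c_{k,n} e_{k,n} = 0$ in $L^2_W(\T^d)$, a nontrivial linear relation among the $e_{k,n}$ provided not all $c_{k,n}$ vanish — and they cannot all vanish unless $\chi_S v \equiv 0$, i.e. $|S| = 0$. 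One then isolates a single index $(k_0, n_0)$ with $c_{k_0,n_0} \ne 0$ and writes $e_{k_0,n_0}$ in terms of the remaining exponentials in $L^2_W$, contradicting minimality. Since $E(K)$ exact implies $E(K)$ minimal, we conclude $|S| = 0$, so $W^{-1}(x)$ is defined a.e.

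For the measurability of $x \mapsto W^{-1}(x)$, once $W(x)$ is invertible a.e. I would note that $W^{-1} = V^* \Lambda^{-1} V$ where $\Lambda^{-1}$ has the measurable entries $1/\zeta_k(x)$ on the diagonal and $V$ is measurable; alternatively, Cramer's rule expresses each entry of $W^{-1}$ as a ratio of polynomials in the (measurable) entries of $W$ with denominator $\det W \ne 0$ a.e., which is manifestly measurable. The main obstacle I anticipate is the measurable selection of the null-vector field $v$ on $S$ and the bookkeeping that turns ``$\chi_S v \not\equiv 0$'' into an honest nontrivial finite or infinite linear relation contradicting minimality; this should be handled cleanly by working with the already-available measurable diagonalization $W = V^*\Lambda V$ rather than invoking a general measurable-selection theorem, and by passing, if necessary, through a truncation of the Fourier expansion to reduce to finitely many terms before invoking minimality. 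The positivity/integrability hypothesis \eqref{eqn:traceInt} is used only to guarantee that $E(K) \subset L^2_W(\T^d)$ so that the notion of minimality makes sense.
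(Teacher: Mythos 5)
There is a genuine gap at the heart of your argument: the step that converts ``$\phi=\chi_S v$ is the zero element of $L^2_W(\T^d)$ but is nonzero in $[L^2(\T^d)]^K$'' into a violation of minimality. Writing $\chi_S v=\sum_{k,n}c_{k,n}e_{k,n}$ gives a series that converges in $[L^2(\T^d)]^K$, but minimality of $E(K)$ is a statement about closed spans \emph{in the topology of $L^2_W(\T^d)$}, so to isolate a term and write $e_{k_0,n_0}=-c_{k_0,n_0}^{-1}\sum_{(k,n)\neq(k_0,n_0)}c_{k,n}e_{k,n}$ you need the partial sums of this series to converge to $\phi$ in the $L^2_W$--norm. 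Nothing in the hypotheses gives this: the weight $W$ is only assumed to satisfy $\operatorname{tr}(W)\in L^1(\T^d)$, so $W$ may be unbounded and $[L^2(\T^d)]^K$ does not embed continuously into $L^2_W(\T^d)$. The truncation you propose as a fallback does not repair this, since a finite truncation of the Fourier series is not zero in $L^2_W$ and its $L^2_W$--distance to $\phi$ is not controlled by its $[L^2]^K$--distance. (Restricting to a subset of $S$ where $W$ is bounded does not help either, because the partial sums are supported on all of $\T^d$.) The measurable-selection and Cramer's-rule parts of your write-up are fine; the missing piece is precisely this convergence.

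The paper avoids the issue by running the argument in the opposite direction and using minimality \emph{positively} rather than contradicting it: exactness furnishes the biorthogonal system $\{h_{k,n}\}\subset L^2_W(\T^d)$, and computing $\langle h_{k},e_{j,\ell}\rangle_{L^2_W}$ as the $(j,\ell)$--Fourier coefficient of $Wh_{k}\in[L^1(\T^d)]^K$ shows $W(x)h_{k}(x)=e_{k}$ a.e.; stacking the $h_k$ into a matrix $H$ gives $WH=I$ a.e., which yields invertibility and measurability of $W^{-1}=H$ in one stroke. If you want to keep your contradiction scheme, the clean repair is the same device: assume a biorthogonal system exists, deduce $Wh_k=e_k$ a.e., and pair against your null field $v$ on $S$ to get $\langle e_k,v(x)\rangle=\langle h_k(x),W(x)v(x)\rangle=0$ for all $k$, forcing $v=0$ on $S$ --- but at that point you have reproduced the paper's proof.
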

\begin{proof}
Assume that $E(K)$ is exact in $L^2_W(\T^d)$. For every $k=1,...,K$ let $h_k\in L^2_W(\T^d)$ be the vector valued function satisfying
\[
\langle h_k, e_{j,\ell}\rangle_{L^2_W(\T^d)} =\left\{\begin{array}{ccc}
1$\qquad$(j,\ell)=(k,0)\\
0$\qquad\qquad$\textrm{otherwise}\end{array}\right.
\]
Since $E(K)$ is an orthonormal basis in $[L^2(\T^d)]^K$, it follows that $W(x)h_k(x)=e_{k,0}(x)$ for every $k=1,..,K$ and almost every $x$. We conclude that $WH=I$ almost everywhere, where $I$ is the constant identity matrix and $H$ is the matrix whose $k$'th column is $h_k$.
\end{proof}

Applying Lemma \ref{stupidchar}, the following proposition can be proved in exactly the same way as Proposition \ref{prop:MinChar}. We omit the proof.
\begin{proposition}\label{prop:MinChar-multi}
Let  $W$ be a $K \times K$ weight function.
 Then, $E$ is an exact $(C_q)$--system for $L^2_W(\T^d)$ if and only if $W$ is invertible almost everywhere and $W^{-1/2} \in \MM_{2}^{q}(K)$
\end{proposition}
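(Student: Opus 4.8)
The plan is to imitate the proof of Proposition~\ref{prop:MinChar} line by line, replacing the scalar objects by their matrix valued counterparts. The first and only non-formal step is to reformulate the condition $W^{-1/2}\in\mathfrak{M}_2^q(K)$ as a Bessel-type inequality tailored to $L^2_W(\T^d)$. Assume $W$ is invertible almost everywhere, so that $W^{1/2}$ and $W^{-1/2}$ are measurable matrix valued functions, and recall that $W^{-1/2}\in[L^2(\T^d)]^{K\times K}$ exactly when $\operatorname{tr}(W^{-1})\in L^1(\T^d)$, since $\|W^{-1/2}\|_{[L^2(\T^d)]^{K\times K}}^2\asymp\int_{\T^d}\operatorname{tr}(W^{-1})$. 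In the definition $T_{W^{-1/2}}A=\mathcal F_K(W^{-1/2}\mathcal F_K^{-1}A)$, substitute $\psi=W^{-1/2}\mathcal F_K^{-1}A$, i.e.\ $A=\mathcal F_K(W^{1/2}\psi)$; then $A\mapsto\psi$ is a bijection from $[\ell^2(\Z^d)]^K$ onto $L^2_W(\T^d)$, with $\|A\|_{[\ell^2(\Z^d)]^K}=\|W^{1/2}\psi\|_{[L^2(\T^d)]^K}=\|\psi\|_{L^2_W(\T^d)}$ and $T_{W^{-1/2}}A=\mathcal F_K\psi$. Thus $W^{-1/2}\in\mathfrak{M}_2^q(K)$ is equivalent to: $W$ is invertible a.e., $\operatorname{tr}(W^{-1})\in L^1(\T^d)$, and there is $D>0$ with
\[
D\,\|\mathcal F_K\psi\|_{[\ell^q(\Z^d)]^K}\le\|\psi\|_{L^2_W(\T^d)}\qquad\text{for every }\psi\in L^2_W(\T^d).
\]
Here $\mathcal F_K\psi$ is well defined because $W^{-1/2}\in[L^2(\T^d)]^{K\times K}$ forces every $\psi\in L^2_W(\T^d)$ to lie in $[L^1(\T^d)]^K$; in fact its $(k,n)$-coordinate equals $\langle\psi,W^{-1}e_{k,n}\rangle_{L^2_W(\T^d)}$, and $\{W^{-1}e_{k,n}\}$ is the system biorthogonal to $E(K)$ in $L^2_W(\T^d)$.

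Granting the reformulation, the two implications are formal. If $W^{-1/2}\in\mathfrak{M}_2^q(K)$, apply the displayed inequality to finite sums $\psi=\sum a_{k,n}e_{k,n}$: then $\mathcal F_K\psi=(a_{k,n})$ and $\|\mathcal F_K\psi\|_{[\ell^q(\Z^d)]^K}=(\sum|a_{k,n}|^q)^{1/q}$, so the inequality is precisely \eqref{Cq-cond-1}; since $E(K)$ is complete in $L^2_W(\T^d)$ by \eqref{eqn:traceInt}, $E(K)$ is an exact $(C_q)$-system. Conversely, if $E(K)$ is an exact $(C_q)$-system, then it is in particular exact, so Lemma~\ref{stupidchar} gives that $W^{-1}$ is defined a.e.\ and measurable, and the vectors $h_k=W^{-1}e_{k,0}$ constructed there lie in $L^2_W(\T^d)$, which forces the diagonal entries of the positive-definite matrix $W^{-1}$, hence $\operatorname{tr}(W^{-1})$, to be in $L^1(\T^d)$; thus $W$ is invertible a.e.\ and $W^{-1/2}\in[L^2(\T^d)]^{K\times K}$, and the reformulation is available. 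The exact-$(C_q)$ lower bound \eqref{Cq-cond-1} gives the displayed inequality for all finite sums, and one extends it to arbitrary $\psi\in L^2_W(\T^d)$ by the usual limiting procedure: pick finite sums $\psi_j\to\psi$ in $L^2_W(\T^d)$ (possible by completeness), observe that each coordinate functional $\phi\mapsto\langle\phi,W^{-1}e_{k,n}\rangle_{L^2_W(\T^d)}$ is continuous so that $\mathcal F_K\psi_j\to\mathcal F_K\psi$ coordinatewise, truncate to finitely many coordinates, apply \eqref{Cq-cond-1} to each $\psi_j$, and let $j\to\infty$ and then the truncation level tend to infinity.

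The step that deserves care — and the only real obstacle — is the reformulation in the first paragraph: one must check that the integrability needed to make $W^{-1/2}$ a genuine element of $[L^2(\T^d)]^{K\times K}$ is exactly what exactness of $E(K)$ supplies (via Lemma~\ref{stupidchar} together with the trace computation above), and that the substitution $\psi=W^{-1/2}\mathcal F_K^{-1}A$ really does set up a bijection between $[\ell^2(\Z^d)]^K$ and $L^2_W(\T^d)$ under which the operator norm of $T_{W^{-1/2}}$ becomes the best constant in the displayed inequality. Once this is in place, the remainder is a transcription of the scalar argument, with $\ell^q$ replaced by $[\ell^q(\Z^d)]^K$ and Fourier coefficients by their vector valued analogues.
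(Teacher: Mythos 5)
Your proposal is correct and follows exactly the route the paper intends: the paper omits this proof, stating only that it is obtained by applying Lemma~\ref{stupidchar} and repeating the argument of Proposition~\ref{prop:MinChar}, which is precisely what you do (the substitution $\psi=W^{-1/2}\mathcal F_K^{-1}A$ giving the isometry onto $L^2_W(\T^d)$, the trace computation supplying $W^{-1/2}\in[L^2(\T^d)]^{K\times K}$ from exactness, and the usual limiting procedure). The care you take with the integrability of $W^{-1}$ and the biorthogonal system $\{W^{-1}e_{k,n}\}$ is exactly the content hidden in the paper's ``in exactly the same way.''
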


\subsection{The Gramian}

For a vector valued function $H=(h_1, ..., h_K)^T\in [L^2(\R^d)]^K$, we denote by $P(H)$ the following $\Z^d$--periodic, positive--semidefinite $K\times K$ matrix valued function,
\begin{equation*}
	P(H)(x)= \sum_{k \in \Z^d} H(x+k) H(x+k)^*,
\end{equation*}
  where $A^*$ is the adjoint matrix to $A$.
  Note that in the case of a single function $P(h)(x)=\sum_{k\in\Z^d} |h(x+k)|^2$.

 For $F=\{f_1,..,f_K\}\in L^2(\R)$ recall the notations,
 $$V(F)=\overline{\textrm{span} \thinspace \mathcal{T}(F)},\qquad\mathcal{T}(F)=\{f_k(x-n):n\in\Z^d, k=1,..,K\}.$$
  With a slight abuse of notation, we denote $\widehat{F}(x)=(\widehat{f}_1(x), \widehat{f}_2(x),... ,\widehat{f}_K(x))^T$. Similar to the Zak tranform for Gabor systems, many properties of $\mathcal{T}(F)$ as a system in the space $V(F)$ may be characterized in terms of $P(\widehat{F})$, which we refer to as the Gramian of $F$. Indeed, define the mapping $I_F: L^2_{P(\widehat{F})}(\T^d)\rightarrow V(F)$ by,
	\[ \widehat{I_F M}= M \widehat{F}= m_1 \widehat{f}_1+ \cdots m_K \widehat{f}_K.\]  Then, (e.g. \cite{HSWW}) $I_F$ is an isometric isomorphism between $L^2_{P(\widehat{F})}(\T^d)$ and $V(F)$.  Note that $I_F e^{2\pi i \langle n, \xi\rangle } e_k = f_k(x-n)$.  Therefore, the set $\mathcal{T}(F)$ in $V(F)$ corresponds to the set of exponentials $E(K)=\{ e_k e^{2\pi i \langle n , \xi\rangle }\}_{k \in \{1,..., K\}, n \in \Z^d}$ in the space $L^2_{P(\widehat{F})}(\T^d)$.  Thus, we have the following from Proposition \ref{prop:MinChar}.

\begin{proposition}\label{prop:CqSISFM}
Fix $q\ge 2$ and let $F \subset L^2(\R^d)$.  Then, $\mathcal{T}(F)$ forms an exact $(C_q)$--system for $V(F)$ if and only if $P^{-1/2}(\widehat{F}) \in \mathfrak{M}_{2}^{q}(K)$.
\end{proposition}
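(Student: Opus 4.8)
The plan is to obtain the proposition as an immediate consequence of Proposition \ref{prop:MinChar-multi}, by transporting the whole problem through the isometric isomorphism $I_F\colon L^2_{P(\widehat{F})}(\T^d)\to V(F)$ recorded above. Since $I_F$ sends the exponential system $E(K)$ onto $\mathcal{T}(F)$ and is an isometry, it preserves completeness as well as the lower inequality \eqref{Cq-cond-1} that characterizes exact $(C_q)$--systems; concretely, for a finite sequence $\{a_{k,n}\}$ one has $\|\sum a_{k,n}\,f_k(\cdot-n)\|_{V(F)}=\|\sum a_{k,n}\,e_{k,n}\|_{L^2_{P(\widehat{F})}(\T^d)}$ because $I_F e_{k,n}=f_k(\cdot-n)$, so $\mathcal{T}(F)$ is an exact $(C_q)$--system for $V(F)$ if and only if $E(K)$ is an exact $(C_q)$--system for $L^2_{P(\widehat{F})}(\T^d)$. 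After this reduction the statement is exactly Proposition \ref{prop:MinChar-multi} with the weight $W=P(\widehat{F})$.

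The remaining task is to check that $W:=P(\widehat{F})$ is a legitimate matrix valued weight function in the sense of Section \ref{sec:BasisProps-2}. It is $\Z^d$--periodic and positive--semidefinite by construction, and the periodization identity together with Tonelli's theorem gives
\[
\int_{\T^d}\text{tr}\bigl(P(\widehat{F})(x)\bigr)\,dx=\sum_{k=1}^K\int_{\T^d}\sum_{n\in\Z^d}|\widehat{f}_k(x+n)|^2\,dx=\sum_{k=1}^K\|f_k\|_{L^2(\R^d)}^2<\infty,
\]
which is \eqref{eqn:traceInt}. With this verified, Proposition \ref{prop:MinChar-multi} applied to $W=P(\widehat{F})$ says precisely that $E(K)$ is an exact $(C_q)$--system for $L^2_{P(\widehat{F})}(\T^d)$ if and only if $P(\widehat{F})$ is invertible almost everywhere and $P^{-1/2}(\widehat{F})\in\mathfrak{M}_2^q(K)$, and combined with the first paragraph this is the assertion of the proposition.

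The one point that warrants a little care is the a.e.\ invertibility of $P(\widehat{F})$, since Proposition \ref{prop:MinChar-multi} is phrased for genuinely positive--definite weights. In the ``only if'' direction I would note that exactness of $\mathcal{T}(F)$ (hence of $E(K)$) forces $P(\widehat{F})^{-1}$ to exist and be measurable a.e.\ by Lemma \ref{stupidchar}, so no issue arises; in the ``if'' direction the hypothesis $P^{-1/2}(\widehat{F})\in\mathfrak{M}_2^q(K)$ already presupposes that $P(\widehat{F})$ is invertible a.e., matching the hypothesis of Proposition \ref{prop:MinChar-multi}. Beyond this I expect no obstacle: all the analytic content has already been distilled into Proposition \ref{prop:MinChar-multi} --- and, via Theorem \ref{thm:KMultEig}, into the scalar multiplier results --- so what is left is the routine bookkeeping just described.
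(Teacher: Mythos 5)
Your proposal is correct and follows the same route as the paper: the paper likewise obtains this proposition by noting that $I_F$ is an isometric isomorphism carrying $E(K)$ onto $\mathcal{T}(F)$, so that exact $(C_q)$--ness transfers, and then invoking the characterization of exact $(C_q)$ exponential systems in weighted spaces (Proposition \ref{prop:MinChar-multi}). Your additional checks --- that $P(\widehat{F})$ satisfies the trace condition \eqref{eqn:traceInt} and that a.e.\ invertibility is supplied by Lemma \ref{stupidchar} in one direction and presupposed by the multiplier hypothesis in the other --- are exactly the bookkeeping the paper leaves implicit.
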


  \subsection{Smoothness properties of the Gramian}

  The following proposition relates smoothness properties of a vector--valued function, $H$, to smoothness properties of the corresponding matrix, $P(H)$.

  \begin{proposition}[Nitzan, Northington, Powell]\label{prop:sqrt-per-emb}
Fix $0<s\le1$.
Suppose $H \in [W^{s,2}(\R^d)]^K$ and $\zeta_1(x)\ge\cdots\ge \zeta_K(x)\ge 0$ are the eigenvalues of $P(H)(x)$, then $\sqrt{\zeta_k} \in W^{s,2}(\T^d)$ for every $k = 1,...,K$.
\end{proposition}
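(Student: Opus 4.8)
The plan is to realize each $\sqrt{\zeta_k}$ as a $k$-th singular value of an explicit operator depending on $x$, and then to transfer the Sobolev regularity of $H$ through the Lipschitz dependence of singular values on operators. For (a.e.) fixed $x\in\T^d$, define the bounded operator $\mathcal{A}_x\colon\ell^2(\Z^d)\to\C^{K}$ by $\mathcal{A}_xc=\sum_{n\in\Z^d}c(n)H(x+n)$. A direct computation gives $\mathcal{A}_x\mathcal{A}_x^{*}=P(H)(x)$, so the eigenvalues $\zeta_1(x)\ge\cdots\ge\zeta_K(x)$ of $P(H)(x)$ are precisely the squares of the singular values of $\mathcal{A}_x$; in particular $\sqrt{\zeta_k(x)}=\sigma_k(\mathcal{A}_x)$ for every $k=1,\dots,K$. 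Moreover $\operatorname{tr}P(H)(x)=\sum_{n\in\Z^d}|H(x+n)|^{2}$, and $\int_{\T^d}\operatorname{tr}P(H)=\sum_{j=1}^{K}\|h_j\|_{L^2(\R^d)}^{2}<\infty$; hence $\operatorname{tr}P(H)\in L^1(\T^d)$, and since $0\le\zeta_k\le\operatorname{tr}P(H)$ this already gives $\sqrt{\zeta_k}\in L^2(\T^d)$.

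Next I would establish the pointwise estimate
\[
|\sqrt{\zeta_k}(x)-\sqrt{\zeta_k}(y)|\ \le\ \Big(\sum_{n\in\Z^d}|H(x+n)-H(y+n)|^{2}\Big)^{1/2}\qquad\text{for a.e. }x,y\in\R^{d},
\]
where $\sqrt{\zeta_k}$ denotes the ($\Z^d$-periodic) function built from $P(H)$. This rests on two standard facts. First, $k$-th singular values are $1$-Lipschitz with respect to the operator norm (by the min--max characterization; since each $\mathcal{A}_x$ has rank at most $K$ this is just Weyl's perturbation inequality for the matrices $P(H)(x)$), so $|\sigma_k(\mathcal{A}_x)-\sigma_k(\mathcal{A}_y)|\le\|\mathcal{A}_x-\mathcal{A}_y\|$. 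Second, $(\mathcal{A}_x-\mathcal{A}_y)^{*}\xi=\big(\langle\xi,H(x+n)-H(y+n)\rangle\big)_{n\in\Z^d}$, so by Cauchy--Schwarz in $\C^{K}$ one gets $\|\mathcal{A}_x-\mathcal{A}_y\|=\|(\mathcal{A}_x-\mathcal{A}_y)^{*}\|\le\big(\sum_{n}|H(x+n)-H(y+n)|^{2}\big)^{1/2}$.

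It then remains to insert this bound into the seminorm and unfold the torus. For $0<s<1$,
\[
\|\sqrt{\zeta_k}\|_{\dot{W}^{s,2}(\T^d)}^{2}\ \le\ \int_{[-1/2,1/2)^d}\frac{1}{|y|^{d+2s}}\Big(\int_{\T^d}\sum_{n\in\Z^d}|H(x+y+n)-H(x+n)|^{2}\,dx\Big)\,dy .
\]
Since the cubes $n+[-1/2,1/2)^d$, $n\in\Z^d$, tile $\R^d$, Tonelli's theorem turns the inner integral into $\int_{\R^d}|H(u+y)-H(u)|^{2}\,du=\sum_{j=1}^{K}\|h_j(\cdot+y)-h_j\|_{L^2(\R^d)}^{2}$; enlarging the $y$-integration from $[-1/2,1/2)^d$ to $\R^d$ then bounds the right-hand side by $\sum_{j=1}^{K}\|h_j\|_{\dot{W}^{s,2}(\R^d)}^{2}<\infty$, so $\sqrt{\zeta_k}\in W^{s,2}(\T^d)$. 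For $s=1$ the same unfolding gives $\|\sqrt{\zeta_k}(\cdot+y)-\sqrt{\zeta_k}\|_{L^2(\T^d)}^{2}\le\sum_{j}\|h_j(\cdot+y)-h_j\|_{L^2(\R^d)}^{2}\le|y|^{2}\sum_{j}\|h_j\|_{\dot{W}^{1,2}(\R^d)}^{2}$, so the difference quotients of $\sqrt{\zeta_k}$ are bounded in $L^2(\T^d)$, which places $\sqrt{\zeta_k}$ in $W^{1,2}(\T^d)$ (by the difference-quotient characterization of Sobolev functions, e.g. Theorem 5.8.3 in \cite{Evans}).

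The only step that needs a little care is the identification $\sqrt{\zeta_k}=\sigma_k(\mathcal{A}_x)$ together with the $1$-Lipschitz dependence of the $\sigma_k$ on the operator in this (a priori infinite-dimensional domain) setting; but each $\mathcal{A}_x$ has rank at most $K$, so everything reduces to the finite-dimensional Weyl inequality for the Gram matrices $P(H)(x)$. The remaining ingredients — the $L^2(\T^d)$ bound, the Cauchy--Schwarz estimate for $\|\mathcal{A}_x-\mathcal{A}_y\|$, and the tiling/Tonelli unfolding — are routine.
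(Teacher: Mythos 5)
Your proposal is correct and follows essentially the same route as the paper: the heart of both arguments is the pointwise bound $|\sqrt{\zeta_k(x)}-\sqrt{\zeta_k(y)}|\le\big(\sum_{j}\sum_{n}|h_j(x+n)-h_j(y+n)|^2\big)^{1/2}$, which the paper derives by an explicit min--max argument for $P=A^*A$ and you derive via Weyl's perturbation inequality for the singular values of the rank-$\le K$ operator $\mathcal{A}_x$ (the same fact in different packaging), followed by the identical tiling/Tonelli unfolding of the torus seminorm into $\sum_j\|h_j\|_{\dot{W}^{s,2}(\R^d)}^2$. The only cosmetic divergence is at $s=1$, where the paper passes to the Fourier side via Parseval and you invoke the real-variable difference-quotient characterization of $W^{1,2}$; both are standard and equivalent.
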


\begin{proof}
For $H=(h_1,...,h_K) $,  let $A(x)$ be the operator mapping $\C^K$ into sequences on $\Z^d$, defined by
\[ (A(x) c)_\ell = c_1\overline{h_1(x-\ell)} + c_2 \overline{h_2(x-\ell)} +... + c_K\overline{h_K(x-\ell)}.\]
 Note that $A(x)$ may be viewed as an $\infty \times K$ matrix, and that
\[ P(x):=P(H) (x)= A(x)^* A(x),\]
which implies the equality $ \langle P(x)c, c\rangle=\|A(x)c\|_2^2$ for every $c\in \C^K$. The min--max theorem (see e.g., Corollary III.1.2 in \cite{BH}), shows that the eigenvalues $\zeta_k$ of $P(H)$ satisfy
\begin{align*}
\sqrt{\zeta_k(x)}=\max\{\min\{  \|A(x)c\|_2 : c \in M, |c|=1\}: \dim(M)=k\}.
\end{align*}
From this equation we immediately obtain the bound
\[\zeta_k(x) \le \sum_{\ell \in \Z^d} \sum_{j=1}^K |h_j(x-\ell)|^2\qquad k=1,...,K.\]
In particular, $\sqrt{\zeta_k} \in L^2(\T^d)$ for every $k=1,...,K$.

We will now obtain a similar estimate for $|\sqrt{\zeta_k(x)}-\sqrt{\zeta_k(y)}|$.  Without loss of generality, we may assume $\zeta_k(x)\ge \zeta_k(y)$.  Choose a subspace $M_0$ which realizes the maximum for $\zeta_k(x)$.  Then, we have
\begin{align*}
	 \left|\sqrt{\zeta_k(x)}-\sqrt{\zeta_k(y)}\right|&\le  \min\{  \|A(x)c\|_2 : c \in M_0, |c|=1\}-\min\{ \|A(y)b\|_2 : b \in M_0, |b|=1\}.
\end{align*}
Next, choose $b_0 \in M_0$ with $\|b_0\|=1$ such that the minimum in the right term is achieved at $b_0$.  It follows that,
\begin{align}
\left|\sqrt{\zeta_k(x)}-\sqrt{\zeta_k(y)}\right|& \le  \|A(x)b_0\|_2 -\|A(y)b_0\|_2\le \| (A(x)-A(y)) b_0\|_2 \nonumber\\
	& \le \left(\sum_{j=1}^K \sum_{\ell \in \Z^d} |h_j(x-\ell)-h_j(y-\ell)|^2\right)^{1/2}.\label{eqn:eigbound}
\end{align}

\underline{\textit{Case 1: $s<1$}} Using Equation \eqref{eqn:eigbound}, we find
\begin{align*}
 \|\sqrt{\zeta_k}\|^2_{\dot{W}^{s,2}(\T^d)}&\le C \int_{[-\frac{1}{2},\frac{1}{2}]^d} \int_{[-\frac{1}{2}, \frac{1}{2}]^d} \frac{\sum_{j=1}^K \sum_{\ell \in \Z^d} |h_j(x+y-\ell)-h_j(x-\ell)|^2}{|y|^{d+2s}} dy dx\\
		&= C \sum_{j=1}^K \int_{\R^d} \int_{[-\frac{1}{2}, \frac{1}{2}]^d} \frac{ |h_j(x+y)-h_j(x)|^2}{|y|^{d+2s}} dy dx \le C \sum_{j=1}^K \|h_j\|_{\dot{W}^{s,2}(\R^d)}^2 ,
\end{align*}
and the right hand side is finite by the assumptions on $H$.

\underline{\textit{Case 2: $s=1$}} Here we use the equivalence between the spaces $W^{s,2}$ and the spaces $H^{s}$. For notational simplicity, let $g=\sqrt{\zeta_k}$. Equation \eqref{eqn:eigbound} implies that for any $i\in\{1,...,K\}$
\begin{align*}
		\int_{\T^d}\left| g(x+te_i) - g(x)\right|^2 dx &\le \int_{\T^d}\sum_{j=1}^K \sum_{\ell \in \Z^d} \left|h_j(x+te_i-\ell)-h_j(x-\ell)\right|^2dx\\
		&=  \sum_{j=1}^K \int_{\R^d} \left|h_j(x+te_i)-h_j(x)\right|^2dx.
\end{align*}
Parseval's equality for $L^2(\R^d)$ and $L^2(\T^d)$ allows us to reformulate this inequality as
\begin{align*}
		\sum_{n \in \Z^d} |\widehat{g}(n)|^2 |e^{2\pi i n_i t}-1|^2
		& \le  \sum_{j=1}^K \int_{\R^d} |\widehat{h}_j(\xi)|^2 |e^{2\pi i \xi_i t}-1|^2 d\xi.
\end{align*}
Standard bounds on $|e^{2\pi i \xi_i t}-1|$ now imply that,
\begin{align*}
	\sum_{|n|\le \frac{1}{4|t|}} |\widehat{g}(n)|^2 |n_i|^{2} \lesssim  \sum_{n \in \Z^d} |\widehat{g}(n)|^2 \left|\frac{e^{2\pi i n_i t}-1}{t}\right|^2  \lesssim  \sum_{j=1}^K \int_{\R^d} |\widehat{h}_j(\xi)|^2 |\xi_i |^2 d\xi
\end{align*}
and the right hand side is uniformly bounded in $t$.  Thus, taking the limit as $t \rightarrow 0$, and summing over $i \in \{1,...,K\}$ we find that
\[ \|g\|_{\dot{H}^{1}(\T^d)}^2 \lesssim \sum_{j=1}^K \|h_j\|_{\dot{H}^{1}(\R^d)}^2<\infty.\]
\end{proof}

\subsection{Extra invariance}

In this subsection we show that if a shift--invariant space $V(F)$ has non--trivial extra invariance then the determinant of $P(\widehat{F})$ has a 'large' zero set.

We will require some preliminary definitions and results. Throughout this section the term \textit{full--rank lattice}, or in short \textit{lattice}, refers to a set $\Gamma=B\Z^d$, where $B$ is a $d\times d$ real invertible matrix. For such a lattice $\Gamma$, the dual lattice $\Gamma^*$ is defined by
\[\Gamma^*=\{y\in\R^d:\forall x\in\Gamma,\:\: \langle x, y\rangle  \in \Z\},\] and satisfies $\Gamma^*=(B^T)^{-1}\Z^d$.

Let $F=\{f_1,...,f_K\}\subset\R^d$ and consider the shift--invariant space $V(F)$ and the Gramian $P(\widehat{F})$. For a lattice $\Z^d\subsetneq \Gamma\subset \R^d$, let $R\subset\Z^d$ be a set of representatives of the quotient $\Z^d/\Gamma^*$. By rearranging terms in the sum we can rewrite the Gramian as
\[
	P(\widehat{F})(x)= \sum_{k \in R}\sum_{\gamma \in \Gamma^*} \widehat{F}(x+\gamma+k)\widehat{F}(x+\gamma+k)^*=\sum_{k \in R}  P_{\Gamma^*}(\widehat{F})(x+k),
\]
where we define $P_{\Gamma^*}(\widehat{F})(x)= \sum_{\gamma \in \Gamma^*} \widehat{F}(x+\gamma)\widehat{F}(x+\gamma)^*$.

Recall that the space $V(F)$ has extra invariance if there exists $\gamma\in\R^d\setminus\Z^d$ such that for every $h\in V(F)$ we have also $h(x-\gamma)\in V(F)$. The space $V(F)$ has extra invariance if and only if there exists a lattice $\Z^d\subsetneq \Gamma\subset \R^d$ such that $V(F)$ is invariant under translates of all elements in $\Gamma$. Indeed, if $V(F)$ is invariant to translates by $\gamma$ then it is invariant to translates by all elements in the closed additive group generated by $\Z^d$ and $\gamma$, see Proposition 2.1 of \cite{ACP}.
Now, if $\gamma$ is a rational point one can choose $\Gamma$ to be this group, while if $\gamma$ has an irrational component, then there exists a rational point $\tilde{\gamma}$ which belongs to this group but not to $\Z^d$, and $\Gamma$ may be chosen to be the additive group generated by $\tilde{\gamma}$ and $\Z^d$.

 For a lattice $\Gamma \supsetneq \Z^d$ the $\Gamma$--invariance of $V(F)$  is characterized in terms of $P(\widehat{F})$ by A. Aldroubi, C. Cabrelli, C. Heil, K. Kornelson, and U. Molter in dimension one, \cite{ACHKM}, and by M. Anastasio, C. Cabrelli, and V. Paternostro in higher dimensions, \cite{ACP}.
The space $V(F)$ is $\Gamma$--invariant if and only if
\begin{equation}\label{eqn:RankFormula}
{\rm rank}\left[ P(\widehat{F})(x) \right]= \sum_{k \in R} {\rm rank} \left[ P_{\Gamma^*}(\widehat{F})(x+k) \right], \text{ a.e. }  x \in \R^d.
\end{equation}
In particular, this identity combined with the following observation of R. Tessera and H. Wang, which is a simple application of the min--max theorem, implies an estimate on the eigenvalues of the corresponding matrices.
\begin{lemma}[Lemma 3.1 in \cite{TW}]\label{ev-estimate}
Let $A_1,...,A_K$ and $B$ be nonnegative matrices and denote by $\eta_k$ and $\mu$ the smallest positive eigenvalue of $A_k$ and $B$, $1\leq k\leq K$, respectively. If $B=\sum A_k$ and $\rm{rank}(B)=\sum \rm{rank}(A_k)$ then $\mu\leq \min_{1\leq k\leq K}\eta_k$.
\end{lemma}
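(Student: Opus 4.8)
The plan is to reduce the statement to the Courant--Fischer min--max characterization of the eigenvalues of $B$ together with an elementary dimension count. Write $d_j=\operatorname{rank}(A_j)$ and $r=\operatorname{rank}(B)$, and order the eigenvalues of the nonnegative matrix $B$ as $\lambda_1\ge\cdots\ge\lambda_n\ge 0$. Since $B\ge 0$ has rank $r$, exactly $r$ of these are positive, so $\mu=\lambda_r$. By the min--max theorem (see e.g. Corollary III.1.2 in \cite{BH}) it therefore suffices, for each fixed $k$, to produce a subspace $M\subset\R^n$ with $\dim M=n-r+1$ on which $\langle Bx,x\rangle\le\eta_k|x|^2$.

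First I would record two consequences of the hypotheses $B=\sum_j A_j$ and $A_j\ge 0$. Because $\langle Bx,x\rangle=\sum_j\langle A_jx,x\rangle$ is a sum of nonnegative terms, $Bx=0$ forces $\langle A_jx,x\rangle=0$, hence $A_jx=0$, for every $j$; thus $\ker B=\bigcap_j\ker A_j$, and passing to orthogonal complements, $\operatorname{range}(B)=\sum_j\operatorname{range}(A_j)$. Comparing dimensions, the rank identity $r=\sum_j d_j$ says precisely that this sum of ranges is direct. Consequently, for fixed $k$ the subspace $W_k:=\sum_{j\ne k}\operatorname{range}(A_j)$ has dimension $r-d_k$, so $W_k^\perp=\bigcap_{j\ne k}\ker A_j$ has dimension $n-r+d_k$; moreover $x\in W_k^\perp$ implies $A_jx=0$ for all $j\ne k$, whence $\langle Bx,x\rangle=\langle A_kx,x\rangle$.

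Next I would choose $M$ inside $W_k^\perp$. Let $E_k$ denote the eigenspace of $A_k$ for the eigenvalue $\eta_k$ and set $L_k=\ker A_k\oplus E_k$, the span of all eigenvectors of $A_k$ with eigenvalue at most $\eta_k$ (there are no eigenvalues strictly between $0$ and $\eta_k$, as $\eta_k$ is the smallest positive one). Then $\dim L_k=(n-d_k)+\dim E_k\ge n-d_k+1$, and $\langle A_kx,x\rangle\le\eta_k|x|^2$ for every $x\in L_k$. Taking $M=L_k\cap W_k^\perp$, the bound $\dim(U\cap V)\ge\dim U+\dim V-n$ gives $\dim M\ge(n-d_k+1)+(n-r+d_k)-n=n-r+1$, while for $x\in M$ we have $\langle Bx,x\rangle=\langle A_kx,x\rangle\le\eta_k|x|^2$. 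Restricting to an $(n-r+1)$--dimensional subspace of $M$ and applying Courant--Fischer gives $\mu=\lambda_r\le\eta_k$, as required.

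Once $M$ is in hand the whole argument is bookkeeping; the step I expect to need the most care is the passage from the rank identity to the genuine \emph{direct} sum $\operatorname{range}(B)=\bigoplus_j\operatorname{range}(A_j)$, and through it to the exact value $\dim W_k^\perp=n-r+d_k$ — everything downstream is the dimension inequality for intersections together with one application of the min--max theorem.
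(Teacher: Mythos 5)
Your proof is correct, and it follows exactly the route the paper indicates: the lemma is quoted from \cite{TW} without proof and described there as ``a simple application of the min--max theorem,'' which is precisely your argument (the rank identity forces $\operatorname{range}(B)=\bigoplus_j\operatorname{range}(A_j)$, one intersects $\bigcap_{j\ne k}\ker A_j$ with the span of the small eigenvectors of $A_k$, and Courant--Fischer finishes). No gaps.
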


Next, denote by $J$ the cardinality of the smallest set $H \subset L^2(\R^d)$ such that $V(H)=V(F)$.  Proposition 4.1 in \cite{TW} implies that
\begin{equation}\label{eqn:MinGen}
 J = \text{ess} \sup_{x \in \R^d}\left( \text{rank}\left[ P(\widehat{F}) (x)\right]\right).
 \end{equation}
Recall that the extra invariance by $\gamma$ of a space $V(F)$ is non--trivial if $J\gamma\notin\Z^d$.   It follows from the discussion above that $V(F)$ has non--trivial extra invariance if and only if there exists a lattice $\Z\subsetneq \Gamma\subset \R^d$ such that $V(F)$ is invariant under translates of all elements in $\Gamma$ and the size of the quotient group $\Gamma/\Z^d$, denoted by $[\Gamma:\Z^d]$, does not divide $J$. Indeed, to see this note that the smallest integer $m$ for which $m\gamma\in\Z^d$ is equal to $[\Gamma:\Z^d]$ where $\Gamma$ is the closed group generated by $\gamma$ and $\Z^d$.

We will require also the following simple geometric observation. For $x \in \R^{d-1}$ recall the notation $L_i(x)$ for the line $L_i(x)=\{(x_1,...,x_{i-1},t,x_{i},...,x_{d-1})\}$.  (See Section \ref{subsec:SobLines})

\begin{lemma}\label{linelemma}
 Let $d\geq 2$ and $S\subset \R^d$ be a measurable set which satisfies $|S|,|S^c|>0$. Then, for some $i_0 \in\{1,...,d\}$, there exists a set of positive $(d-1)$-dimensional Lebesgue measure $A \subset \R^{d-1}$ so that for every $x\in A$ we have
\begin{equation}\label{lineintersect}
S\cap L_{i_0}(x)\neq\emptyset\qquad\textrm{and}\qquad S\cap L_{i_0}(x)\neq L_{i_0}(x).
\end{equation}
\end{lemma}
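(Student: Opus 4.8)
\emph{Proof plan.} The plan is to prove the following stronger, purely measure--theoretic statement, which immediately implies the lemma: for some $i_0\in\{1,\dots,d\}$ the set
\[
A_{i_0}=\bigl\{\, x\in\R^{d-1}\;:\;|S\cap L_{i_0}(x)|_1>0 \ \text{ and }\ |S^c\cap L_{i_0}(x)|_1>0 \,\bigr\}
\]
has positive $(d-1)$--dimensional Lebesgue measure, where $|\cdot|_1$ denotes one--dimensional Lebesgue measure. This suffices: a set of positive one--dimensional measure is nonempty, so for each $x\in A_{i_0}$ both $S\cap L_{i_0}(x)$ and $S^c\cap L_{i_0}(x)$ are nonempty, i.e.\ $S\cap L_{i_0}(x)\neq\emptyset$ and $S\cap L_{i_0}(x)\neq L_{i_0}(x)$; thus $A=A_{i_0}$ does the job. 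Note also that each $A_i$ is measurable, since by Tonelli's theorem the maps $x\mapsto|S\cap L_i(x)|_1$ and $x\mapsto|S^c\cap L_i(x)|_1$ are measurable on $\R^{d-1}$.

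I would argue by contradiction and assume $|A_i|=0$ for every $i\in\{1,\dots,d\}$. Fix $i$, and let $\pi_i\colon\R^d\to\R^{d-1}$ be the projection deleting the $i$--th coordinate, so that $z\in L_i(\pi_i(z))$. Since a line has infinite length, $|S\cap L_i(x)|_1$ and $|S^c\cap L_i(x)|_1$ cannot both vanish, so for almost every $x$ exactly one of them is zero. Setting $P_i=\{x:|S^c\cap L_i(x)|_1=0\}$, which is measurable, an application of Tonelli's theorem then shows that $\mathbf{1}_S$ agrees almost everywhere on $\R^d$ with $\mathbf{1}_{P_i}\circ\pi_i$. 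Thus, under the contradiction hypothesis, for every $i$ the indicator $\mathbf{1}_S$ coincides almost everywhere with a function that does not depend on the $i$--th variable.

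The crux of the matter is then the elementary fact that a measurable function $f$ on $\R^d$ which, for each $i=1,\dots,d$, agrees almost everywhere with a function of the remaining $d-1$ variables must be almost everywhere constant. I would prove this by induction on $d$: the case $d=1$ is immediate, and for $d\ge2$ one uses Fubini's theorem to see that for almost every value of the last coordinate the corresponding slice of $f$ on $\R^{d-1}$ again satisfies the hypothesis, hence by the inductive step equals, almost everywhere, a constant depending only on that last coordinate; but $f$ is also almost everywhere independent of the last coordinate, which forces these constants to agree for almost every value of the last coordinate, so $f$ is almost everywhere constant. Applying this to $f=\mathbf{1}_S$ yields $|S|=0$ or $|S^c|=0$, contradicting the hypotheses, and the lemma follows.

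I expect the only genuine content to be the inductive step just described; everything else is bookkeeping with Tonelli's and Fubini's theorems. The hypothesis $d\ge2$ is what makes the conclusion non--degenerate, since for $d=1$ there is a single line $L_1=\R$ and nothing to prove.
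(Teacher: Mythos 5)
Your proof is correct. It runs on the same engine as the paper's argument (Fubini/Tonelli plus induction on the dimension), but it is organized differently and proves a bit more. The paper fixes the direction $i=1$, observes that if the conclusion fails there then $S$ coincides $d$--dimensionally a.e.\ with a cylinder $\R\times S_1$ with $|S_1|,|S_1^c|>0$, and recurses on $S_1\subset\R^{d-1}$ until a good direction is found. You instead assume the conclusion fails in \emph{every} direction, deduce that $\mathbf{1}_S$ agrees a.e.\ with a function independent of each variable separately, and then invoke the general fact that such a function is a.e.\ constant -- a clean, reusable lemma whose Fubini induction plays the role of the paper's recursion on $S_1$. Your formulation buys two things. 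First, the conclusion is genuinely stronger: on a positive-measure set of lines both $S$ and $S^c$ meet the line in positive one-dimensional measure, not merely in a nonempty set, so the statement becomes invariant under null modifications of $S$ (which is all one can reasonably ask of a merely measurable $S$). Second, by working with the measurable conditions $|S\cap L_i(x)|_1>0$ and $|S^c\cap L_i(x)|_1>0$ rather than the set-theoretic ones, you sidestep a measurability point the paper glosses over: the set of $x$ with $\emptyset\neq S\cap L_1(x)\neq L_1(x)$ is not obviously Lebesgue measurable, and the negation of the lemma's conclusion only says it contains no positive-measure subset. The one step worth writing out in full is the inductive "independent of each variable implies constant" lemma, but your sketch of it (slice in the last variable, apply the inductive hypothesis to a.e.\ slice, then use independence of the last variable to equate the slice constants) is exactly right.
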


\begin{proof}
	Throughout this proof we refer to the Lebesgue measure in $\R^n$ as the $n$--dim measure.
 Denote $S_1=\{x\in\R^{d-1}: S\cap L_1(x)=L_1(x)\}$. Note that if (\ref{lineintersect}) does not hold for $i=1$, then $S=\R\times S_1$ $d$--dim almost everywhere, and that in this case the assumption on $S$ implies that both $S_1$ and $\R^{(d-1)}\setminus S_1$ have positive $(d-1)$--dim  measure.

 We first consider the case $d=2$. If (\ref{lineintersect}) does not hold for $i=1$ then  for almost every $x\in\R$ the restriction of $S$ to $L_2(x)$ is equal to $S_1$ $1$--dim almost everywhere, and therefore (\ref{lineintersect})  holds for $i=2$. We proceed by induction and assume the Lemma holds for $d-1$. If (\ref{lineintersect}) does not hold for $i=1$ then, as observed above, $S_1$ satisfies the conditions of the Lemma in $\R^{d-1}$. It follows that there exists $i_0\in\{2,...,d\}$ and a set $A'\subset \R^{d-2}$ of positive $(d-2)$--dim measure, so that in $\R^{d-1}$ condition (\ref{lineintersect}) holds for $S_1$ and $A'$. Since  $S=\R\times S_1$ $d$--dim almost everywhere, we conclude that in $\R^d$ condition (\ref{lineintersect}) holds for $S$ and almost every $x\in A:=\R\times A'$. The fact that $A'$ has positive $(d-2)$--dim measure implies that $A$ has positive $(d-1)$--dim measure, and the conclusion of the lemma follows.
\end{proof}

We are now ready to prove the following proposition which generalizes previous one--dimensional results of \cite{ASW, TW} to higher dimensions.

\begin{proposition}\label{prop:LargeZeroSet}
Fix  $1/2<s\le1$. Let $F=\{f_1,...,f_K\}\subset\R^d$ and let $J$ be the cardinality of the smallest set $G \subset L^2(\R^d)$ such that $V(G)=V(F)$. If $V(F)$ has a non--trivial extra invariance and $\widehat{f}_k\in W^{s,2}(\R^d)$ for $k=1,...,K$, then
$\mathcal{H}^{d-1}(\Sigma(\zeta_{J}))>0$ for the eigenvalue $\zeta_{J}$ of $P(\widehat{F})$.
\end{proposition}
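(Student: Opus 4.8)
\emph{Reduction.} If $\operatorname{rank}[P(\widehat F)(x)]<J$ on a set of positive Lebesgue measure then $\zeta_J$ vanishes there; since $\zeta_J\le\operatorname{tr}P(\widehat F)\in L^1(\T^d)$, almost every such point is a Lebesgue point of $\zeta_J$ and a density point of the set, hence lies in $\Sigma(\zeta_J)$, so $|\Sigma(\zeta_J)|>0$ and $\mathcal H^{d-1}(\Sigma(\zeta_J))=\infty$. So I may assume $\operatorname{rank}[P(\widehat F)(x)]=J$ for a.e.\ $x$.

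\emph{Setup via the extra invariance.} As recalled before Lemma \ref{linelemma}, non-trivial extra invariance gives a lattice $\Z^d\subsetneq\Gamma\subset\R^d$ with $V(F)$ $\Gamma$-invariant and $m:=[\Gamma:\Z^d]\nmid J$, and $P(\widehat F)(x)=\sum_{k\in R}\widetilde P(x+k)$ with $\widetilde P=P_{\Gamma^*}(\widehat F)$ and $R$ a set of representatives of $\Z^d/\Gamma^*$, of cardinality $m$. Set $r(x)=\operatorname{rank}[\widetilde P(x)]$ and $J_\Gamma=\operatorname{ess\,sup} r$, and let $\theta_{J_\Gamma}$ be the $J_\Gamma$-th eigenvalue of $\widetilde P$, so that $\{\theta_{J_\Gamma}>0\}=\{r=J_\Gamma\}$ up to a null set. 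The rank identity \eqref{eqn:RankFormula} gives $\sum_{k\in R}r(\cdot+k)=J$ a.e.; since $m\nmid J$, the function $r$ is not a.e.\ constant, and hence $\{r=J_\Gamma\}$ and $\{r<J_\Gamma\}$ both have positive measure in a period of $\Gamma^*$. Proposition \ref{prop:sqrt-per-emb} (with $\Gamma^*$ in place of $\Z^d$, which is harmless) yields $\sqrt{\theta_{J_\Gamma}}\in W^{s,2}$, and in its stated form it yields $\sqrt{\zeta_J}\in W^{s,2}(\T^d)$; as $1/2<s\le1$, Proposition \ref{prop:HolderEmbedLines} shows that $(\sqrt{\theta_{J_\Gamma}})^*$ and $(\sqrt{\zeta_J})^*$ are continuous on almost every axis-parallel line, with zero set on such a line equal to the intersection with the relevant $\Sigma$-set.

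\emph{The line construction.} Apply Lemma \ref{linelemma} to $S=\{r=J_\Gamma\}$: there are a direction $e_{i_0}$ and a set $A\subset\R^{d-1}$ of positive $(d-1)$-measure such that for $x\in A$ the line $L=L_{i_0}(x)$ meets $S$ and its complement, each in positive one-dimensional measure; after removing a further null set from $A$ we assume also that $(\sqrt{\theta_{J_\Gamma}})^*$ is continuous on $L$ and $(\sqrt{\zeta_J})^*$ is continuous on $L-\kappa$ for a fixed $\kappa\in R$. For such $L$, the zero set $Z_L$ of $(\sqrt{\theta_{J_\Gamma}})^*|_L$ is closed, has positive one-dimensional measure (it contains $S^c\cap L$ up to a null set) and is a proper subset of $L$; pick $z\in\partial Z_L$, so $(\sqrt{\theta_{J_\Gamma}})^*(z)=0$, and pick Lebesgue points $y_n\in L\setminus Z_L$ with $y_n\to z$, $r(y_n)=J_\Gamma$ and $\operatorname{rank}[P(\widehat F)(y_n-\kappa)]=J$. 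Since the ranks in $P(\widehat F)(y_n-\kappa)=\sum_{k\in R}\widetilde P(y_n-\kappa+k)$ add, Lemma \ref{ev-estimate} bounds $\zeta_J(y_n-\kappa)$, the smallest positive eigenvalue of $P(\widehat F)(y_n-\kappa)$, by the smallest positive eigenvalue of $\widetilde P(y_n)$, which is $\theta_{J_\Gamma}(y_n)$ because $r(y_n)=J_\Gamma$. Therefore $(\sqrt{\zeta_J})^*(y_n-\kappa)\le(\sqrt{\theta_{J_\Gamma}})^*(y_n)\to 0$, and continuity of $(\sqrt{\zeta_J})^*$ on $L-\kappa$ gives $(\sqrt{\zeta_J})^*(z-\kappa)=0$, i.e.\ $z-\kappa\in\Sigma(\sqrt{\zeta_J})$. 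As $x$ ranges over $A$, the points $z(x)-\kappa$ lie on the pairwise disjoint lines $L_{i_0}(x)-\kappa$, and since orthogonal projection onto $\{y_{i_0}=0\}$ is $1$-Lipschitz and injective on this family, $\Sigma(\sqrt{\zeta_J})$ contains a set of $(d-1)$-Hausdorff measure at least $|A|>0$.

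\emph{The obstacle.} What remains is to replace $\Sigma(\sqrt{\zeta_J})$ by $\Sigma(\zeta_J)$; this is the point I expect to be most delicate. Because $\zeta_J\le\operatorname{tr}P(\widehat F)\in L^1$, splitting a small cube according to $\sqrt{\zeta_J}\le\lambda$ versus $\sqrt{\zeta_J}>\lambda$ shows that any point at which $(\sqrt{\zeta_J})^*$ vanishes and which is a Lebesgue point of $\operatorname{tr}P(\widehat F)$ belongs to $\Sigma(\zeta_J)$; so $\Sigma(\sqrt{\zeta_J})$ and $\Sigma(\zeta_J)$ differ only by a Lebesgue-null set, and one must arrange the constructed zeros $z(x)-\kappa$ to be Lebesgue points of $\operatorname{tr}P(\widehat F)$ for $x$ in a positive-measure subset of $A$. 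The underlying difficulty is that the pointwise bound $\zeta_J(\cdot-\kappa)\le\theta_{J_\Gamma}$ is available only on the rank-$J_\Gamma$ stratum $S$, not on a whole neighborhood of $z-\kappa$, so cube-averages of $\zeta_J$ cannot be controlled directly; the Hölder continuity of $\sqrt{\zeta_J}\in W^{s,2}$ along lines (Proposition \ref{prop:HolderEmbedLines}) is exactly what converts subsequential smallness of $\zeta_J$ along a line into genuine vanishing of its starred representative. Finally, for $d=1$ the claim only asserts $\Sigma(\zeta_J)\neq\emptyset$, $\zeta_J$ is already continuous (as $s>1/2$), and the argument reduces to the known one-dimensional statement.
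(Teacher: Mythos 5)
Your argument is, in essence, the paper's own proof: the same initial reduction to $\operatorname{rank}P(\widehat F)=J$ a.e., the same use of the rank identity \eqref{eqn:RankFormula} together with $[\Gamma:\Z^d]\nmid J$ to produce a top--rank stratum of $P_{\Gamma^*}(\widehat F)$ that is neither null nor co--null, the same combination of Propositions \ref{prop:sqrt-per-emb} and \ref{prop:HolderEmbedLines} with Lemmas \ref{linelemma} and \ref{ev-estimate} to manufacture a zero of the line--continuous representative of the relevant eigenvalue on each line of a positive--measure family, and the same projection argument at the end. One cosmetic repair: Lemma \ref{linelemma} only guarantees \emph{nonempty} intersections with $S$ and $S^c$, not positive--measure ones. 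The paper gets what you need by applying the lemma to the superlevel set $\{\gamma_M^*>0\}$, whose trace on a good line is relatively open by the line--continuity, so a nonempty trace automatically contains an interval; you should likewise apply the lemma to $\{(\sqrt{\theta_{J_\Gamma}})^*>0\}$ rather than to the rank stratum.

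The obstacle you flag at the end is genuine relative to the literal statement, but it is equally present --- and unacknowledged --- in the paper's proof. The regularity supplied by Proposition \ref{prop:sqrt-per-emb} is for $\sqrt{\zeta_J}$, so Proposition \ref{prop:HolderEmbedLines} produces zeros of $(\sqrt{\zeta_J})^*$, i.e.\ points of $\Sigma(\sqrt{\zeta_J})$; by Cauchy--Schwarz one only has $\Sigma(\zeta_J)\subset\Sigma(\sqrt{\zeta_J})$, and your observation that the reverse containment holds at Lebesgue points of $\operatorname{tr}P(\widehat F)$ does not close the gap, since the exceptional set is merely Lebesgue--null and could in principle contain the whole $(d-1)$--dimensional set you construct. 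The paper simply invokes Proposition \ref{prop:HolderEmbedLines} for $\zeta_J$ itself (whose membership in a suitable $W^{s,r}$ is not established) and identifies the resulting zero set with $\Sigma(\zeta_J)$, which elides exactly this point. The discrepancy is harmless downstream: in the proof of Theorem \ref{thm:CqSISHD} the proposition feeds Corollary \ref{thm:ScalarValuedLargeZeroResult-matrix} applied to $W=P^{1/2}$, whose relevant eigenvalue is $\sqrt{\zeta_J}$, so the statement actually needed --- and the one both your argument and the paper's establish --- is $\mathcal H^{d-1}\bigl(\Sigma(\sqrt{\zeta_J})\bigr)>0$. In short: same route as the paper, correctly executed up to the one soft spot, which you have honestly isolated and which the paper shares.
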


\begin{proof}
Let $\Z^d\subsetneqq \Gamma$ be a lattice such that $V(F)$ is invariant under shifts by the elements of $\Gamma$, and let $\Gamma^*$ be its dual. Such a lattice exists due to the discussion above.
To simplify notations we denote $H=\widehat{F}$, $P=P(H)$ and $P_{\Gamma^*}=P_{\Gamma^*}(H)$.
Let $\zeta_1(x)\ge \cdots\ge  \zeta_K(x)\ge 0$ denote the eigenvalues of $P(x)$ and $\gamma_1(x) \ge \cdots \ge \gamma_K(x) \ge 0$ denote the eigenvalues of $P_{\Gamma^*}(x)$.
Proposition \ref{prop:sqrt-per-emb} implies that $\sqrt{\zeta_k} \in W^{s,2}(\T^d)$ for all $k$. Moreover, it implies also that the $\Gamma^*$ periodic functions $\sqrt{\gamma_k}$ are locally in $W^{s,2} (\R^d)$  for each $k \in \{1,...,K\}$. Indeed, if we let $A$ be the full rank matrix so that $\Gamma^* = A\Z^d$ then we have $P_{\Gamma^*}(A x)= P(H\circ A)(x)$, where $H\circ A$ is defined by $H\circ A(x)=H(Ax)$.  If $H \subset W^{s,2}(\R^d)$, then $H\circ A \subset W^{s,2}(\R^d)$ and the claim follows from applying Proposition \ref{prop:sqrt-per-emb} to $P(H\circ A)(x)$.

Next, equation \eqref{eqn:RankFormula} may be reformulated as
\[ \max \{ j: \zeta_j(x)>0\} = \sum_{k \in R}  \max\{\ell: \gamma_\ell(x+k)>0\}, \ \ \ \text{a.e. } x \in \R^d.\]
By equation \eqref{eqn:MinGen}, $J$ is the largest index such that $\zeta_j$ is not equal to zero almost everywhere.  We can assume without loss of generality that $\zeta_{J}>0$ almost everywhere, or else the proposition follows trivially.  Thus, we have,
\[ J= \sum_{k \in R} \max\{\ell: \gamma_\ell(x+k)>0\}, \ \ \ \text{a.e. } x \in \R^d.\]

Let $M$ be the largest index such that $\gamma_M$ is not identically equal to zero.  Note that $\gamma_M$ cannot be positive almost everywhere, or else we would have $J= M [\Gamma: \Z^d]$, and $[\Gamma: \Z^d]$ would divide $J$.

We first consider the case $d=1$. The following argument was given for a single generator in \cite{ASW} and extended to several generators in \cite{TW}, we repeat it for completeness.   Since $s>1/2$, Theorem \ref{thm:SobEmb} implies that $\gamma_M$ and $\zeta_J$  both have continuous representatives, and can therefore be assumed to be continuous. Let $S_{\Gamma}=\{ x \in \R: \gamma_M(x)>0\}$ and note that $S_{\Gamma}$ is open.  By the observations above $0<|S_{\Gamma}|, |S_{\Gamma}^c|$ and so there exists $x_0\in\R$ in the boundary of $S_\Gamma$ such that $\gamma_M(x_0)=0$. Lemma \ref{ev-estimate} implies that $0 \le \zeta_J(x) \le \gamma_M(x)$ on $S_\Gamma$ and due to the continuity of both functions, it follows that $\zeta_J(x_0)=0$ as well.
By the periodicity of $\zeta_J$, we can assume that this zero is in $[-1/2,1/2)$ which implies that $\mathcal{H}^{0}(\Sigma(\zeta_J))>0$.

Next, we consider the case where $d>1$. We assume that for $k=1,...,K$ we have $\gamma_M(x)=\gamma_M^*(x)$  and $\zeta_J(x)=\zeta_J^*(x)$ for every $x\in\R^d$, where $\gamma_M^*, \zeta_J^*$ are the representatives of $\gamma_M$ and $\zeta_J$ given in (\ref{fstar-defs}). In particular this implies that $\zeta_J$ and $\gamma_M$ are well defined for every $x\in\R^d$ and that they satisfy Proposition \ref{prop:HolderEmbedLines}.

Let $S_{\Gamma}=\{ x \in \R^d: \gamma_M(x)>0\}$. Note that $S_{\Gamma}$ are well defined. We have $0<|S_{\Gamma}|, |S_{\Gamma}^c|$ and so Lemma \ref{linelemma} may be applied. Let $A$ and $i_0$ be as in the Lemma.

By Proposition \ref{prop:HolderEmbedLines}, $\zeta_J$ and $\gamma_M$  are continuous on almost every line parallel to the $i_0$ coordinate axis. We can therefore assume that these functions are continuous on every line $L=L_{i_0}(x)$ with $x\in A$ and, by Lemma \ref{ev-estimate}, that $0 \le \zeta_J(x) \le \gamma_M(x)$ on $L$. Repeating the same considerations as in the case $d=1$, we conclude that on every such line there exists a point $x_0$ such that $\zeta_J(x_0)=0$. Since $\zeta_J$ is periodic, we can further assume that $A\subset [-1/2,1/2)^{d-1}$ and the points $x_0$ belong to  $[-1/2,1/2)^{d}$.

As the zero set of $\zeta_J=\zeta_J^*$ in $[-1/2,1/2)^d$ is exactly $\Sigma(\zeta_J)$, we conclude that the projection of $\Sigma(\zeta_J)$ onto the hyperplane perpendicular to the $i_0$ coordinate axis contains $A$, and therefore that it has positive $(d-1)$--Lebesgue measure.  Since projections can only decrease distances, the Hausdorff dimension of $\Sigma(\zeta_J)$ must be greater than or equal to $d-1$.
\end{proof}

\subsection{Proofs for theorems \ref{thm:CqSISHD-1d} and \ref{thm:CqSISHD}}

In this subsection we give a proof of  theorems \ref{thm:CqSISHD-1d} and \ref{thm:CqSISHD}. The $q=2$ case of these theorems is given by Theorem 1.3 of \cite{HNP}, and so we will only consider $2<q\le \infty$.

\begin{proof}

First, we prove the necessary condition for Theorem \ref{thm:CqSISHD} as this implies the same for Theorem \ref{thm:CqSISHD-1d}.  Then, we prove sharpness of the two results separately.

Let $d\geq 1$ and $F=\{f_k\}_{k=1}^{K}$ be as in the corresponding Theorem. Suppose for a contradiction that  for $t=\min( 2d/q'-d+1,2)$ we have
\begin{equation}\label{intconv}\int_\R |x|^{t} |f_k(x)|^2 dx<\infty\qquad \forall k\in \{1,...,K\},\end{equation}
that is, suppose that $\widehat{F} \in [W^{s,2}(\R^d)]^K$ for $s=\min(d/q'-d/2+1/2,1)$. By Proposition \ref{prop:sqrt-per-emb} it follows that the eigenvalues $\zeta_i$ of $P:=P(\widehat{F})$ satisfy $\sqrt{\zeta_i} \in W^{s,2}(\T^d)$ for all $i \in \{1,...,K\}$.
On the other hand, Proposition \ref{prop:LargeZeroSet} implies that $\mathcal{H}^{d-1}(\Sigma(det(P^{1/2})))>0$. Finally, since $\mathcal{T}(F)$ is a minimal $(C_q)$--system, Proposition \ref{prop:CqSISFM} implies that $P^{-1/2} \in \mathfrak{M}_2^q(K)$. So, by corollary \ref{thm:ScalarValuedLargeZeroResult-matrix} we must have $s>\min(d/q'-d/2+1/2,1)$, which is a contradiction.

We now turn to prove the sharpness of Theorem \ref{thm:CqSISHD-1d}.  For $2<q\le \infty$, let $\beta < 1-\frac{2}{q}$, and let $f_{\beta}$ be the function defined in Proposition \ref{prop:CqCounterD1}. Then, in particular, $f_{\beta}$ satisfies (\ref{intconv}) for all $t<1+\beta$. Note that taking $\beta$ arbitrarily close to $1-\frac{2}{q}$, allows $1+\beta$ to become arbitrarily close to the critical exponent $2/q'$.  Consider the family $F$ containing the single function $F=\{f_{\beta}\}$ and note that $P(\widehat{f_{\beta}})$ is a scalar valued function which is equal to $(\widehat{f_{\beta}})^2$ on $[-\frac{1}{2},\frac{1}{2}]$. By applying the Fourier transform it is readily checked that $V(f_\beta)$ is translation invariant, that is, that it is invariant under any real shift.

We claim that $P(\widehat{f_{\beta}})^{-1/2} \in \mathcal{M}_2^q$, so that $\mathcal{T}(f_\beta)$ is an exact $(C_q)$--system for $V(f_\beta)$. Indeed, by the discussion following Theorem \ref{thm:ScalarValuedLargeZeroResult} it is enough to check that $P(\widehat{f_{\beta}})^{-1/2}\in L^{\frac{2q}{q-2}}(\T)$. We have
\[ \int_{\T}P(\widehat{f_{\beta}})(\xi)^{-\frac{q}{q-2}} dx =2\int_{0}^{1/2} (\frac{1}{2}-\xi)^{-\frac{\beta q}{q-2}} dx,\]
and this integral is finite since  $\beta < 1-\frac{2}{q}$.

Finally, we use the construction above to demonstrate the sharpness of the case $q=\infty$ in Theorem \ref{thm:CqSISHD}.
For any $d\in \N$ and $\beta<1$, let $F_\beta(x)=f_\beta(x_1) f_\beta(x_2)\cdots f_\beta(x_d)$.  Then, by Proposition \ref{prop:CqCounterD1}, $F_{\beta}$ satisfies (\ref{intconv}) for all $t<2$. Consider the family $F$ containing the single function $F=\{F_{\beta}\}$ and note that as above $P(\widehat{F_{\beta}})$ is a scalar valued function which is equal to $(\widehat{F_{\beta}})^2$ on $[-\frac{1}{2},\frac{1}{2}]^d$. As above, $V(F_\beta)$ is translation invariant, that is, that it is invariant under any real shift.
Since $P(\widehat{F_{\beta}})^{-1/2} \in L^2(\T^d)=\mathcal{M}_2^{\infty}$, it follows that $\mathcal{T}(F_{\beta})$ is an exact $(C_{\infty})$--system for $V(F_{\beta})$. The proof is complete.

\end{proof}

\section{Further directions of study and open problems}\label{open problems}

In this section we outline some questions related to this work.

\begin{itemize}

\item[1.]
As mentioned in the introduction, it is likely that the results in theorems \ref{thm:ScalarValuedLargeZeroResult} and \ref{thm:CqSISHD} are not sharp (see also Theorem \ref{thm:ScalarValuedLargeZeroGeneralResult} and its corollaries). Following the approach developed above, any refinement of Theorem \ref{thm:ScalarValuedLargeZeroResult} will imply in turn an improvement of the result in Theorem \ref{thm:CqSISHD}. In particular, as discussed in the introduction, it is natural to ask whether the upper bound in part i of Theorem \ref{thm:ScalarValuedLargeZeroResult} can be replaced by the bound $q\le ({d-\sigma})/({d-\sigma-s})$.
If such a result holds then Theorem \ref{thm:CqSISHD} will remain true with the bound $t\ge 2/q'$.  With a similar argument to that of the sharpness of Theorem \ref{thm:CqSISHD-1d}, one can show that no bigger bound is possible in Theorem \ref{thm:CqSISHD}.

\item[2.]
The parameters of the Sobolev space $W^{s,r}(\T^d)$ considered in this paper are restricted to satisfy $\alpha=s-d/r <1$, which implies that this space is embedded into the space of $\alpha$-H\"older continuous functions. To obtain sharp bounds in the case of $\alpha=1$, some refinement of the results is expected. In particular, such a refinement in Theorem \ref{thm:nonsymSobolevEmb} will allow to extend Theorem \ref{thm:nonsymmetricBLT} to the full boundary of the region described in \cite{NO1}.

Next, it is interesting to compare the case $\alpha >1$ to the case $\alpha=1$. Since the zeros considered in this paper are in general assumed to be of order one,
it is not clear if the added regularity will make any change to the results.

\item[3.]
The range of $(p,q)$ multipliers considered in Section \ref{subsec:PQMult} guarantees that $\mathcal{M}_p^q \subset L^2(\T^d)$.  However, when $1 \le p \le 2 \le q \le \infty$, the space $\mathcal{M}_p^q$ contains distributions which cannot be represented by functions. However, it seems natural to ask whether there exists a version of our multiplier results also for $L^1(\T^d)$ functions which belong to such spaces.

\item[4.] We list shortly some additional settings in which our approach may be applied:
\begin{itemize}
\item[i.] Does a sharp version of the Balian-Low results in \cite{NO1} exist in higher dimensions?
\item[ii.] Can the condition of 'non trivial extra invariance' in Theorem \ref{thm:CqSISHD} be replaced by the condition that the system of translates is not minimal in the space? We note that it is not hard to show that this is indeed the case in  Theorem \ref{thm:CqSISHD-1d}. (See also Theorem 1.5 in \cite{HNP}).
\item[iii.] Does there exist a version of our results in the case where the Gabor system $G(\Z)$ is not complete in $L^2(\R)$, but is a $C_q$ system in its closed linear span? (See e.g. \cite{CLPP} for the case $q=2$).
\end{itemize}

\end{itemize}

\section*{Acknowledgements}
The new results in this paper were developed in collaboration with the author's doctoral advisor, Alex Powell, and the author's postdoctoral mentor, Shahaf Nitzan.  The contents should be considered a joint work.  The author greatly thanks both of them for their guidance, time, and contributions during the course of this project.  The author would also like to thank Chris Heil and Jan--Fredrik Olsen for helpful comments and discussions.  

The author was supported by NSF grant DMS-1344199.  Work on this paper was also supported by  NSF grants DMS-1600726 and DMS-1521749.

\begin{bibdiv}
	\begin{biblist}
		\bib{Ad}{book}{
			author={Adams, Robert A.},
			title={Sobolev spaces},
			note={Pure and Applied Mathematics, Vol. 65},
			publisher={Academic Press [A subsidiary of Harcourt Brace Jovanovich,
				Publishers], New York-London},
			date={1975},
			pages={xviii+268},
			review={\MR{0450957}},
		}
		\bib{ACHKM}{article}{
			author={Aldroubi, Akram},
			author={Cabrelli, Carlos},
			author={Heil, Christopher},
			author={Kornelson, Keri},
			author={Molter, Ursula},
			title={Invariance of a shift-invariant space},
			journal={J. Fourier Anal. Appl.},
			volume={16},
			date={2010},
			number={1},
			pages={60--75},
			issn={1069-5869},
			review={\MR{2587581}},
			doi={10.1007/s00041-009-9068-y},
		}
		\bib{ASW}{article}{
			author={Aldroubi, Akram},
			author={Sun, Qiyu},
			author={Wang, Haichao},
			title={Uncertainty principles and Balian-Low type theorems in principal
				shift-invariant spaces},
			journal={Appl. Comput. Harmon. Anal.},
			volume={30},
			date={2011},
			number={3},
			pages={337--347},
			issn={1063-5203},
			review={\MR{2784568}},
			doi={10.1016/j.acha.2010.09.003},
		}
		\bib{ACP2}{article}{
			author={Anastasio, Magal\'{\i}},
			author={Cabrelli, Carlos},
			author={Paternostro, Victoria},
			title={Extra invariance of shift-invariant spaces on LCA groups},
			journal={J. Math. Anal. Appl.},
			volume={370},
			date={2010},
			number={2},
			pages={530--537},
			issn={0022-247X},
			review={\MR{2651674}},
			doi={10.1016/j.jmaa.2010.05.040},
		}
		\bib{ACP}{article}{
			author={Anastasio, Magal\'{\i}},
			author={Cabrelli, Carlos},
			author={Paternostro, Victoria},
			title={Invariance of a shift-invariant space in several variables},
			journal={Complex Anal. Oper. Theory},
			volume={5},
			date={2011},
			number={4},
			pages={1031--1050},
			issn={1661-8254},
			review={\MR{2861548}},
			doi={10.1007/s11785-010-0045-x},
		}
		\bib{Bal81}{article}{
			author={Balian, Roger},
			title={Un principe d'incertitude fort en th\'{e}orie du signal ou en
				m\'{e}canique quantique},
			language={French, with English summary},
			journal={C. R. Acad. Sci. Paris S\'{e}r. II M\'{e}c. Phys. Chim. Sci. Univers
				Sci. Terre},
			volume={292},
			date={1981},
			number={20},
			pages={1357--1362},
			issn={0249-6305},
			review={\MR{644367}},
		}
		\bib{Bat88}{article}{
			author={Battle, Guy},
			title={Heisenberg proof of the Balian-Low theorem},
			journal={Lett. Math. Phys.},
			volume={15},
			date={1988},
			number={2},
			pages={175--177},
			issn={0377-9017},
			review={\MR{943990}},
			doi={10.1007/BF00397840},
		}
		\bib{BH}{book}{
			author={Bhatia, Rajendra},
			title={Matrix analysis},
			series={Graduate Texts in Mathematics},
			volume={169},
			publisher={Springer-Verlag, New York},
			date={1997},
			pages={xii+347},
			isbn={0-387-94846-5},
			review={\MR{1477662}},
			doi={10.1007/978-1-4612-0653-8},
		}
		\bib{BN}{article}{
			author={Brezis, H.},
			author={Nirenberg, L.},
			title={Degree theory and BMO. I. Compact manifolds without boundaries},
			journal={Selecta Math. (N.S.)},
			volume={1},
			date={1995},
			number={2},
			pages={197--263},
			issn={1022-1824},
			review={\MR{1354598}},
			doi={10.1007/BF01671566},
		}
		\bib{BO}{article}{
			author={B\'{e}nyi, \'{A}rp\'{a}d},
			author={Oh, Tadahiro},
			title={The Sobolev inequality on the torus revisited},
			journal={Publ. Math. Debrecen},
			volume={83},
			date={2013},
			number={3},
			pages={359--374},
			issn={0033-3883},
			review={\MR{3119672}},
			doi={10.5486/PMD.2013.5529},
		}
		\bib{BCGP}{article}{
			author={Benedetto, John J.},
			author={Czaja, Wojciech},
			author={Gadzi\'{n}ski, Przemys\l aw},
			author={Powell, Alexander M.},
			title={The Balian-Low theorem and regularity of Gabor systems},
			journal={J. Geom. Anal.},
			volume={13},
			date={2003},
			number={2},
			pages={239--254},
			issn={1050-6926},
			review={\MR{1967026}},
			doi={10.1007/BF02930696},
		}
		\bib{BBM}{article}{
			author={Bourgain, Jean},
			author={Brezis, Haim},
			author={Mironescu, Petru},
			title={Lifting in Sobolev spaces},
			journal={J. Anal. Math.},
			volume={80},
			date={2000},
			pages={37--86},
			issn={0021-7670},
			review={\MR{1771523}},
			doi={10.1007/BF02791533},
		}
		\bib{B}{article}{
			author={Bownik, Marcin},
			title={The structure of shift-invariant subspaces of $L^2({\R}^n)$},
			journal={J. Funct. Anal.},
			volume={177},
			date={2000},
			number={2},
			pages={282--309},
			issn={0022-1236},
			review={\MR{1795633}},
			doi={10.1006/jfan.2000.3635},
		}
		\bib{CLPP}{article}{
			author={Caragea, Andrei},
			author={Lee, Dae Gwan},
			author={Pfander, G\"{o}tz E.},
			author={Philipp, Friedrich},
			title={A Balian--Low Theorem for Subspaces},
			journal={J. Fourier Anal. Appl.},
			volume={25},
			date={2019},
			number={4},
			pages={1673--1694},
			issn={1069-5869},
			review={\MR{3977131}},
			doi={10.1007/s00041-018-9634-2},
		}
		\bib{C}{book}{
			author={Christensen, Ole},
			title={An introduction to frames and Riesz bases},
			series={Applied and Numerical Harmonic Analysis},
			edition={2},
			publisher={Birkh\"{a}user/Springer, [Cham]},
			date={2016},
			pages={xxv+704},
			isbn={978-3-319-25611-5},
			isbn={978-3-319-25613-9},
			review={\MR{3495345}},
			doi={10.1007/978-3-319-25613-9},
		}
		\bib{DJ}{article}{
			author={Daubechies, I.},
			author={Janssen, A. J. E. M.},
			title={Two theorems on lattice expansions},
			journal={IEEE Trans. Inform. Theory},
			volume={39},
			date={1993},
			number={1},
			pages={3--6},
			issn={0018-9448},
			review={\MR{1211486}},
			doi={10.1109/18.179336},
		}
		\bib{Dau}{article}{
			author={Daubechies, Ingrid},
			title={The wavelet transform, time-frequency localization and signal
				analysis},
			journal={IEEE Trans. Inform. Theory},
			volume={36},
			date={1990},
			number={5},
			pages={961--1005},
			issn={0018-9448},
			review={\MR{1066587}},
			doi={10.1109/18.57199},
		}	
		\bib{DH}{article}{
			author={Devinatz, A.},
			author={Hirschman, I. I., Jr.},
			title={The spectra of multiplier transforms $l\sp{p}$},
			journal={Amer. J. Math.},
			volume={80},
			date={1958},
			pages={829--842},
			issn={0002-9327},
			review={\MR{99566}},
			doi={10.2307/2372836},
		}
		\bib{DPV}{article}{
			author={Di Nezza, Eleonora},
			author={Palatucci, Giampiero},
			author={Valdinoci, Enrico},
			title={Hitchhiker's guide to the fractional Sobolev spaces},
			journal={Bull. Sci. Math.},
			volume={136},
			date={2012},
			number={5},
			pages={521--573},
			issn={0007-4497},
			review={\MR{2944369}},
			doi={10.1016/j.bulsci.2011.12.004},
		}
		\bib{Evans}{book}{
			author={Evans, Lawrence C.},
			title={Partial differential equations},
			series={Graduate Studies in Mathematics},
			volume={19},
			edition={2},
			publisher={American Mathematical Society, Providence, RI},
			date={2010},
			pages={xxii+749},
			isbn={978-0-8218-4974-3},
			review={\MR{2597943}},
			doi={10.1090/gsm/019},
		}
		\bib{Gautam}{article}{
			author={Gautam, S. Zubin},
			title={A critical-exponent Balian-Low theorem},
			journal={Math. Res. Lett.},
			volume={15},
			date={2008},
			number={3},
			pages={471--483},
			issn={1073-2780},
			review={\MR{2407224}},
			doi={10.4310/MRL.2008.v15.n3.a7},
		}
		\bib{Giusti}{book}{
			author={Giusti, Enrico},
			title={Direct methods in the calculus of variations},
			publisher={World Scientific Publishing Co., Inc., River Edge, NJ},
			date={2003},
			pages={viii+403},
			isbn={981-238-043-4},
			review={\MR{1962933}},
			doi={10.1142/9789812795557},
		}
		\bib{Gr}{book}{
			author={Grafakos, Loukas},
			title={Classical Fourier analysis},
			series={Graduate Texts in Mathematics},
			volume={249},
			edition={2},
			publisher={Springer, New York},
			date={2008},
			pages={xvi+489},
			isbn={978-0-387-09431-1},
			review={\MR{2445437}},
		}
		\bib{G}{book}{
			author={Gr\"{o}chenig, Karlheinz},
			title={Foundations of time-frequency analysis},
			series={Applied and Numerical Harmonic Analysis},
			publisher={Birkh\"{a}user Boston, Inc., Boston, MA},
			date={2001},
			pages={xvi+359},
			isbn={0-8176-4022-3},
			review={\MR{1843717}},
			doi={10.1007/978-1-4612-0003-1},
		}
		\bib{Hi}{article}{
			author={Hirschman, I. I., Jr.},
			title={On multiplier transformations},
			journal={Duke Math. J.},
			volume={26},
			date={1959},
			pages={221--242},
			issn={0012-7094},
			review={\MR{104973}},
		}
		\bib{HNP}{article}{
			author={Hardin, Douglas P.},
			author={Northington, Michael C., V},
			author={Powell, Alexander M.},
			title={A sharp Balian-Low uncertainty principle for shift-invariant
				spaces},
			journal={Appl. Comput. Harmon. Anal.},
			volume={44},
			date={2018},
			number={2},
			pages={294--311},
			issn={1063-5203},
			review={\MR{3743691}},
			doi={10.1016/j.acha.2016.05.001},
		}
		\bib{HPExact}{article}{
			author={Heil, Christopher},
			author={Powell, Alexander M.},
			title={Regularity for complete and minimal Gabor systems on a lattice},
			journal={Illinois J. Math.},
			volume={53},
			date={2009},
			number={4},
			pages={1077--1094},
			issn={0019-2082},
			review={\MR{2741179}},
		}
		\bib{HPrimer}{book}{
			author={Heil, Christopher},
			title={A basis theory primer},
			series={Applied and Numerical Harmonic Analysis},
			edition={Expanded edition},
			publisher={Birkh\"{a}user/Springer, New York},
			date={2011},
			pages={xxvi+534},
			isbn={978-0-8176-4686-8},
			review={\MR{2744776}},
			doi={10.1007/978-0-8176-4687-5},
		}
		\bib{HSWW}{article}{
			author={Hern\'{a}ndez, Eugenio},
			author={\v{S}iki\'{c}, Hrvoje},
			author={Weiss, Guido},
			author={Wilson, Edward},
			title={On the properties of the integer translates of a square integrable
				function},
			conference={
				title={Harmonic analysis and partial differential equations},
			},
			book={
				series={Contemp. Math.},
				volume={505},
				publisher={Amer. Math. Soc., Providence, RI},
			},
			date={2010},
			pages={233--249},
			review={\MR{2664571}},
			doi={10.1090/conm/505/09926},
		}
		\bib{H}{article}{
			author={H\"{o}rmander, Lars},
			title={Estimates for translation invariant operators in $L\sp{p}$\
				spaces},
			journal={Acta Math.},
			volume={104},
			date={1960},
			pages={93--140},
			issn={0001-5962},
			review={\MR{0121655}},
			doi={10.1007/BF02547187},
		}
		\bib{JL}{article}{
			author={Jiang, Huiqiang},
			author={Lin, Fanghua},
			title={Zero set of Sobolev functions with negative power of
				integrability},
			journal={Chinese Ann. Math. Ser. B},
			volume={25},
			date={2004},
			number={1},
			pages={65--72},
			issn={0252-9599},
			review={\MR{2033951}},
			doi={10.1142/S0252959904000068},
		}
		\bib{Low}{book}{
			author={Low, F.},
			title={Complete sets of wave packets, in: ``A Passion for Physics--Essays in Honor of Geoffrey Chew,'' C.~DeTar et al.},
			edition={eds.},
			publisher={World Scientific, Singapore},
			date={1985},
			pages={17--22},
		}
		\bib{Ma}{article}{
			author = {Marcinkiewicz, J.},
			journal = {Studia Mathematica},
			language = {fre},
			number = {1},
			pages = {78-91},
			title = {Sur les multiplicateurs des s\'eries de Fourier},
			url = {http://eudml.org/doc/218847},
			volume = {8},
			year = {1939},
		}
		\bib{Mi}{article}{
			author={Mihlin, S. G.},
			title={On the multipliers of Fourier integrals},
			language={Russian},
			journal={Dokl. Akad. Nauk SSSR (N.S.)},
			volume={109},
			date={1956},
			pages={701--703},
			issn={0002-3264},
			review={\MR{0080799}},
		}
		\bib{NO3}{article}{
			author={Nitzan, Shahaf},
			author={Olevskii, Alexander},
			title={Quasi-frames of translates},
			language={English, with English and French summaries},
			journal={C. R. Math. Acad. Sci. Paris},
			volume={347},
			date={2009},
			number={13-14},
			pages={739--742},
			issn={1631-073X},
			review={\MR{2543974}},
			doi={10.1016/j.crma.2009.04.001},
		}
		\bib{NO1}{article}{
			author={Nitzan, Shahaf},
			author={Olsen, Jan-Fredrik},
			title={From exact systems to Riesz bases in the Balian-Low theorem},
			journal={J. Fourier Anal. Appl.},
			volume={17},
			date={2011},
			number={4},
			pages={567--603},
			issn={1069-5869},
			review={\MR{2819167}},
			doi={10.1007/s00041-010-9150-5},
		}
		\bib{NHO}{article}{
			author={Nitzan-Hahamov, Shahaf},
			author={Olevskii, Alexander},
			title={Sparse exponential systems: completeness with estimates},
			journal={Israel J. Math.},
			volume={158},
			date={2007},
			pages={205--215},
			issn={0021-2172},
			review={\MR{2342464}},
			doi={10.1007/s11856-007-0010-1},
		}
		\bib{NIT}{article}{
			author={Nitzan, Shahaf},
			title={Frame--type systems},
			journal={Ph.D. thesis, Tel--Aviv University, Doctoral Thesis},
			date={2009}
			}
		\bib{RS}{article}{
			author={Ron, Amos},
			author={Shen, Zuowei},
			title={Frames and stable bases for shift-invariant subspaces of
				$L_2( \R^d)$},
			journal={Canad. J. Math.},
			volume={47},
			date={1995},
			number={5},
			pages={1051--1094},
			issn={0008-414X},
			review={\MR{1350650}},
			doi={10.4153/CJM-1995-056-1},
		}
		\bib{SC}{article}{
			author={Schikorra, Armin},
			title={A note on zero-sets of fractional Sobolev functions with negative
				power of integrability},
			journal={Proc. Amer. Math. Soc.},
			volume={143},
			date={2015},
			number={3},
			pages={1189--1197},
			issn={0002-9939},
			review={\MR{3293734}},
			doi={10.1090/S0002-9939-2014-12372-0},
		}
		\bib{TW}{article}{
			author={Tessera, Romain},
			author={Wang, Haichao},
			title={Uncertainty principles in finitely generated shift-invariant
				spaces with additional invariance},
			journal={J. Math. Anal. Appl.},
			volume={410},
			date={2014},
			number={1},
			pages={134--143},
			issn={0022-247X},
			review={\MR{3109826}},
			doi={10.1016/j.jmaa.2013.07.077},
		}
		\bib{Triebel}{book}{
			author={Triebel, Hans},
			title={Theory of function spaces. III},
			series={Monographs in Mathematics},
			volume={100},
			publisher={Birkh\"{a}user Verlag, Basel},
			date={2006},
			pages={xii+426},
			isbn={978-3-7643-7581-2},
			isbn={3-7643-7581-7},
			review={\MR{2250142}},
		}
	\end{biblist}
\end{bibdiv}

\end{document}